\newif\iffull
\renewcommand{\d}{\mathrm{d}}
\newcommand{\E}{\mathrm{E}}
\newcommand{\bx}{\mathbf{x}}
\newcommand{\bX}{\mathbf{X}}
\newcommand{\bM}{\mathbf{M}}
\newcommand{\bQ}{\mathbf{Q}}
\newcommand{\bY}{\mathbf{Y}}
\newtheorem{thm}{Theorem}
\newtheorem{lemma}{Lemma}
\newtheorem{claim}{Claim}
\newtheorem{observation}{Observation}
\newtheorem{cor}{Corollary}
\theoremstyle{definition}
\newtheorem{defn}{Definition}
\newtheorem{assump}{Assumption}
\newtheorem{remark}{Remark}
\newcommand{\T}{\lfloor c\log n\rfloor}
\begin{document}
\title{Derandomized Load Balancing using Random Walks on Expander Graphs} 


\author{%
  \IEEEauthorblockN{Dengwang Tang and Vijay G. Subramanian}
  \IEEEauthorblockA{University of Michigan, Ann Arbor\\
                    1301 Beal Ave, Ann Arbor, Michigan\\
                    Email: \{dwtang, vgsubram\}@umich.edu}
}


\maketitle


\begin{abstract}
	In a computing center with a large number of machines, when a job arrives, a dispatcher need to decide which machine to route this job to based on limited information. A classical method, called the \emph{power-of-$d$ choices} algorithm, is to pick $d$ servers independently at random and dispatch the job to the least loaded server among the $d$ servers. In this paper, we analyze a low-randomness variant of this dispatching scheme, where $d$ queues are sampled through $d$ independent non-backtracking random walks on a $k$-regular graph $G$. Under certain assumptions on the graph $G$, we show that under this scheme, the dynamics of the queuing system converges to the same deterministic ordinary differential equation (ODE) system for the \emph{power-of-$d$ choices} scheme. We also show that the system is stable under the proposed scheme, and the stationary distribution of the system converges to the fixed point of the ODE system. 
\end{abstract}


\section{Introduction}
In computing centers where an extremely large amount of computations are performed, there are usually a multitude of servers. This enables the computing center to handle multiple jobs at the same time. When a computational job is given to a computing center, a router, or a dispatcher, decides on which server to send the job to. The objective of the router is to minimize the queuing delay, hence enhancing the performance of the system. In several queuing system settings, it has been known that Join-the-Shortest-Queue (JSQ) is an optimal dispatching policy \cite{foschini1978basic}. However, it is not always practical to implement this policy, especially when the system contains a large number of servers, since JSQ requires the dispatcher to inquire every server's queue length, and a decision can only be made after all servers have returned the queue length information to the dispatcher. Inspired by this challenge, researchers have been analyzing schemes where the dispatcher only inquire the queue length of a small subset of servers \cite{mitzenmacher1996load, mitzenmacher2001power, vvedenskaya1996queueing}. It turns out that, in a large system, the \emph{power-of-2-choices} scheme \cite{mitzenmacher2001power}, where the job is sent to the shorter queue of 2 uniformly randomly and independently chosen servers/queues, can reduce the queuing delay significantly when comparing to the \emph{random assignment} scheme; where a job is simply sent to a randomly chosen server. The analysis is achieved through the method of \emph{fluid limit estimation}, where the evolution of the queuing system is shown to be approximately following the solution to a system of ordinary differential equations for large systems. This scheme is also extended to \emph{power-of-$d$-choices} scheme where $d$ servers are sampled for each job. The parameter $d\geq 2$ can either be a constant or grow with the size of the system. Subsequent work has been proposing and analyzing variants of the \emph{power-of-$d$-choices} scheme. The authors of \cite{mitzenmacher2002load} proposed a model where the dispatcher has memory which can store the identity of a sampled server. In this scheme, at each time the queue lengths of $d$ randomly sampled servers and the server in the memory are compared; the job joins the shortest queue among them, and the identity of the shortest queue among the $d$ random choices is saved in the memory for next job. The fluid limit approximation result for this scheme is established in \cite{luczak2013averaging}. Ying et al. \cite{ying2015power} extended the use of \emph{power-of-$d$-choices} scheme to the case of batch job arrivals, where each arrival consists of $k$ parallel tasks, slightly more than one server per task are sampled and the $k$ tasks are assigned to the sampled servers in a water-filling manner. Mukherjee et al. \cite{mukherjee2016universality} and Budhiraja et al. \cite{budhiraja2017supermarket} analyzed a variant of \emph{power-of-$d$-choices} where the servers are assumed to be interconnected through a high-degree graph, and $d$ random servers are obtained by choose a random vertex and a subset of its neighbors. Ganesh et al. \cite{ganesh2010load} also utilized an underlying graph to for load balancing problems, where jobs can switch to other queues after assignment.
In the model of ball-in-bins, where $m$ balls are placed into $n$ bins sequentially according to some policy, and the balls does not leave the bins, the \emph{power-of-$d$-choices} scheme and its variants are also analyzed. See \cite{tang2018balanced} for a list of references.

In most of the papers discussed above, the models assume that the sampling of servers for different jobs is performed in an independent manner, although dependence of the $d$ servers sampled by the same job can be present. When implementing these models, $\Theta(\log n)$ bits of randomness are required for each job. True randomness is an important resource on a computer, hence in many computer science applications, it is desirable to have a randomized algorithm which uses only a small amount of randomness. Random walks on expander graphs have been utilized to replace independent uniform randomness in many randomized algorithms \cite{hoory2006expander}. Alon et al. \cite{alon2007non} analyzed the non-backtracking random walk (NBRW) on high girth expander graphs. A typical NBRW sample path has several statistics that are similar to that of independent uniform sampling. Motivated by these works, we proposed the following variant of \emph{power-of-$d$-choices} scheme: Assuming that the servers are interconnected by a $k$-regular graph $G$, at each time a job arrives at the system, $d$ candidate servers are chosen by the location of $d$ independent non-backtracking random walkers, the job joins the shortest queue among the $d$ queues of the candidate servers, and each random walker moves independently to one of the $k-1$ neighbors and uniformly at random. We refer to this scheme as \emph{Non-backtracking Random Walk based Power-of-$d$-choices} (NBRW-Po$d$) scheme. In this paper, we analyze the NBRW-Po$d$ scheme in the standard light traffic model. The NBRW-Po$d$ scheme can be viewed as a derandomized version of \emph{power-of-$d$-choices} scheme, since one of the results of this paper is that it achieves the same performance of \emph{power-of-$d$-choices} while reducing randomness.

Our work in \cite{tang2018balanced} is closely related to this work, as the same dispatching scheme (NBRW-Po$d$) is analyzed in the balls-in-bins model. While the results in ball-in-bins model suggests that NBRW-Po$d$ has a similar behaviour as \emph{power-of-$d$-choices}, it is not clear that this is still true in the dynamic queuing system settings.

Our work in \cite{sigm} is also closely related to this work, where the  \emph{Non-backtracking Random Walk with Restart based Power-of-$d$-choices} (NBRWR-Po$d$) scheme is analyzed. The key difference between the two works lies in two places. First, the random walkers in NBRWR-Po$d$ \cite{sigm} are periodically reset to independent uniform random positions, while the random walkers in NBRW-Po$d$, the scheme in this paper, never reset. Secondly, the assumptions on the underlying graph for NBRWR-Po$d$ \cite{sigm} are weaker than that of NBRW-Po$d$: NBRW-Po$d$ requires high-girth expander graphs, while NBRWR-Po$d$ \cite{sigm} requires only high-girth graphs. 

We characterize the performance of NBRW-Po$d$ scheme via the following results:

\begin{enumerate}
	\item We provide a \emph{fluid-limit approximation} for the NBRW-Po$d$ scheme in Section \ref{sec: fluidlimit}, where the dynamics of the queuing system up to a finite time $T>0$ is shown to converge to the solution of a deterministic ordinary differential equation (ODE), which is the same ODE for the regular \emph{power-of-$d$ choices} \cite{vvedenskaya1996queueing}\cite{mitzenmacher2001power}.
	
	\item We show in Section \ref{sec: stability} that, the NBRW-Po$d$ scheme stablizes the system under the assumption that the underlying graph $G$ is connected and aperiodic.
	
	\item We show an interchange of limits result in Section \ref{sec: convsta}, which states that the stationary distribution of the queuing system under NBRW-Po$d$ scheme converges to the unique fixed point of the limiting ODE.
	
	\iffull
	\item We conduct simulations in Section \ref{sec: sim} to show that the ODE can be a good approximation for the dynamics of relatively small systems. We also investigate the dynamics of the system under different underlying graphs to explore further the relationship between graph family and performance of the scheme.
	\fi
\end{enumerate}

The proof outline of our result are as follows:
\begin{enumerate}
	\item Similar to \cite{sigm}, the queue length statistics process for the NBRW-Po$d$ scheme is not a Markov Process, hence the standard Kurtz's Theorem based fluid-limit approximation \cite{ethier2009markov, draief2010epidemics} does not apply. The methods of Wormald \cite{wormald1995differential} doesn't apply either. Just as in \cite{sigm}, because of the use of random walks introduces dependence on adjacent queues, we believe that the \emph{propagation-of-chaos} method, which was used by \cite{sznitman1991topics}, cannot be applied here. Similar to \cite{sigm}, the proof of the fluid-limit approximation result is based on martingale methods and Gronwall lemma, where the main difference to \cite{sigm} is in the lack of resets, which allows for a similar decomposition into a martingale compensator with the compensator terms small between resets. A stronger characterization of the mixing properties of the random walk is then used along with mixing times replacing the resets to achieve a similar decomposition.
	
	\item For the stability result, our proof utilizes a new variant of Foster-Lyapunov Theorem which bounds the ``future one-step drift'' of the process. As in \cite{sigm}, the theorem is applied on a subprocess and then extended to the continuous time process.
	
	\item We follow the technique of \cite{ying2015power} to prove the convergence of stationary distributions, where for the uniform tail bound part, we utilizes the new variant of Foster-Lyapunov Theorem to provide a bound. 
\end{enumerate}

\iffull
This paper is organized as follows: In Section \ref{sec: model} we introduce our model and notations. In Section \ref{sec: prelim} we prove a few preliminary results. In Section \ref{sec: main},  we state and prove our main results. We provide simulation results in Section \ref{sec: sim} and conclude in Section \ref{sec: concl}. 

\else

Due to page limit, this paper is organized as follows: In Section \ref{sec: model} we introduce our model and notations. In Section \ref{sec: prelim} we list a few preliminary results without proof. In Section \ref{sec: main},  we state and sketch the proof of our main results. We conclude in Section \ref{sec: concl}. 

\fi

\section{Model}\label{sec: model}
In this paper, we analyze the proposed dispatching scheme in the following standard model: There are $n$ servers in the system. Each server is associated with a queue. Jobs arrives at the system following a Poisson process with rate $\lambda n$, where $\lambda < 1$ is a constant. When a job arrives at the system, the dispatcher send the job to one of the servers. The services times for each job at each server are \emph{i.i.d.} exponential random variables with mean 1.

The NBRW-Po$d$ scheme is defined as following: Assume that the servers are interconnected by a $k$-regular graph $G=([n], \mathcal{E})$, where $k\geq 3$ is a constant. Let $W_1, W_2,\cdots, W_d$ be $d$ independent non-backtracking random walks on $G$. When the $j$-th job arrives at the system, allocate the job to the least loaded server among the servers $W_1\{j\}, \cdots, W_d\{j\}$. Ties are broken arbitrarily. 

\iffull

\else
\begin{table}[!ht]
	\begin{tabular}{ll}
		$n$&Number of servers, also the scaling parameter\\
		&for the system.\\
		$G^{(n)}$&A regular graph of $n$ vertices.\\
		$k$&Degree of graph $G^{(n)}$, which is a constant.\\
		$Q_i^{(n)}(t)$&Queue length of server $i$ at (continuous) time $t$.\\
		$Q_i^{(n)}\{j\}$&Queue length of server $i$ as seen by the $j$-th arrival job\\
		$X_i^{(n)}(t)$&Proportion of servers with load at least $i$ at time $t$.\\
		$W_l\{j\}$&Position of the non-backtracking random walker $l$ after \\
		&$j$ steps.\\
		$\overline{W}_l\{j\}$&The directed edge that points towards $W_l\{j\}$ from \\&$W_l\{j-1\}$
	\end{tabular}
	\caption{Notations in this paper}
\end{table}
\fi

To ensure that the proposed scheme has good performance, we need the following assumption on the graph $G$:

\begin{defn}[Expander Graph]\label{def:expander}
	\cite{alon2007non} Let $\{G^{(n)}\}_{n}$ be a sequence of $k$-regular graphs with $n$ vertices. Let $k=\lambda_1^{(n)}\geq \lambda_2^{(n)}\geq \cdots\geq \lambda_n^{(n)}$ be the eigenvalues of the adjacency matrix of $G^{(n)}$. Define $\lambda^{(n)} = \max\{\lambda_2^{(n)}, |\lambda_n^{(n)}| \}$. $\{G^{(n)}\}$ is called an $\lambda$-expander graph sequence if the ``second largest" eigenvalue $\lambda^{(n)}$ of the adjacency matrices of $G^{(n)}$ satisfies $\displaystyle\limsup_{n\rightarrow\infty} \lambda^{(n)} \leq \lambda$ where $\lambda$ is a constant satisfying $\lambda < k$.
\end{defn}

\begin{assump}[$G$ is a High Girth Expander]\label{ass: highgirthexpander}
	The graph sequence $\{G^{(n)}\}$ is a $k$-regular $\lambda$-expander graph sequence, and the girth of $G^{(n)}$ is greater than $2\lceil \alpha\log_{k-1}n\rceil + 1$ for sufficiently large $n$, where $\alpha$ is a positive constant.
\end{assump}

\begin{remark}
	Such graphs do exist. For example, the sequence of Ramanujan graphs, called LPS graphs, in \cite{lubotzky1988ramanujan} is a sequence of $(p+1)$-regular graphs satisfying Assumption \ref{ass: highgirthexpander} with $\alpha < \frac{2}{3}$ and $\lambda= 2\sqrt{p}$.
\end{remark}

By using a non-backtracking random walk on high-girth graphs (instead of simple random walks, or random walks on small girth graphs), it is ensured that the random walkers are not likely to revisit a vertex that it has recently visited. This allows the random walkers to find queues with relatively low load and hence reducing the queuing delay.

It is known that non-backtracking random walks mix fast on expander graphs \cite{alon2007non}. Comparing to the work in \cite{sigm}, the fast mixing of NBRW-Po$d$ plays the role of resets in NBRWR-Po$d$ \cite{sigm}, which ensures that the random walkers does not spend an extended time in a small subset of servers, hence ensuring that the server resources are sufficiently used.

\iffull
\subsection{Notations}

\begin{table}[!ht]
	\begin{tabular}{ll}
		$n$&Number of servers, also the scaling parameter\\
		&for the system.\\
		$G^{(n)}$&A regular graph of $n$ vertices.\\
		$k$&Degree of graph $G^{(n)}$, which is a constant.\\
		$Q_i^{(n)}(t)$&Queue length of server $i$ at (continuous) time $t$.\\
		$Q_i^{(n)}[j]$&Queue length of server $i$ at the $j$-th event\\
		&(i.e. arrival/potential service)\\
		$Q_i^{(n)}\{j\}$&Queue length of server $i$ as seen by the $j$-th arrival job\\
		$X_i^{(n)}(t)$&Proportion of servers with load at least $i$ at time $t$.\\
		$W_l\{j\}$&Position of the non-backtracking random walker $l$ after \\
		&$j$ steps of transition.\\
		$\overline{W}_l\{j\}$&The directed edge that points towards $W_l\{j\}$ from \\&$W_l\{j-1\}$\\
		$T_j$&Arrival time of the $j$-th job.\\
		$\tau_j$&Inter-arrival time between the $(j-1)$-th and $j$-th job.\\
		$\mathcal{F}_t$&A filtration that the random walk and queuing process \\&are adapted to 
	\end{tabular}
	\caption{Notations in this paper}\label{chart}
\end{table}

For the ease of exposition, we will drop the superscript $(n)$ in the proofs when $n$ is clear from context.

Four different brackets are employed to indicate different time index systems: $(t)$ is used for continuous time; $[j]$ indicates the time of the $j$-th arrival and potential service, or the $j$-th transition in a uniformized chain; $\{j\}$ indicates the time of the $j$-th arrival job; $\langle j\rangle$ indicates the time of the $j\T$-th arrival. 

All notations in the paper will follow Table \ref{chart}, with the exception of Section \ref{sec: prelim}, where general lemmas are proved and notations stands for general processes and variables.

For simplicity of notation, when $X$ is a random variable, $t$ is a constant, and $\mathcal{A}$ is an event, we use $\Pr(X\geq t, \mathcal{A} )$ to represent $\Pr(\{\omega: X(\omega)\geq t \}\cap \mathcal{A})$.

In this paper, ``$X\stackrel{d}{\sim}Y$'' means that ``$X$ and $Y$ have the same distribution.'' ``$\Rightarrow$" stands for weak convergence, or convergence in distribution. $d_{W_1, \|\cdot\|_1}$ stands for the $W_1$ Wasserstein distance of measures on a metric space where the metric is induced by the norm $\|\cdot\|_1$.

\fi

\section{Preliminary Results for NBRW-Po$d$}\label{sec: prelim}
In this section, we will prove a few general preliminary results for the proofs of our main results. 

\subsection{Large Deviation Results}
The following results will be some basic concentration inequalities that we will use.

\begin{lemma}[Bernstein]\label{lem: bern}
	Let $\{Z_j\}_{j=1}^\infty$ be a process adapted to the filtration $\{\mathcal{F}_j\}_{j=-1}^\infty$. Let $N> 0$ be even. If $0\leq Z_j\leq B$ and $\E[Z_j|\mathcal{F}_{j-2}]\leq m$ a.s. for all $j\geq 1$, then for any $\lambda\geq 2Nm$, we have
	\begin{equation}
	\Pr\left( \sum_{j=1}^N Z_j\geq\lambda \right)\leq 2\exp\left(-\dfrac{3\lambda}{32B}\right)
	\end{equation}
\end{lemma}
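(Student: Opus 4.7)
The obstacle to applying a standard Bernstein-type concentration inequality directly is that the hypothesis controls $\E[Z_j\mid \mathcal{F}_{j-2}]$ rather than $\E[Z_j\mid \mathcal{F}_{j-1}]$, so $(Z_j)_j$ is not a one-step predictable sequence and cannot be centered against the natural filtration to form a martingale difference sequence. My plan is to split the sum by parity:
\begin{equation}
\sum_{j=1}^N Z_j \;=\; \sum_{k=1}^{N/2} Z_{2k-1} \;+\; \sum_{k=1}^{N/2} Z_{2k},
\end{equation}
and work separately with the two sub-sums. The odd subsequence $(Z_{2k-1})_{k\ge 1}$ is adapted to the subfiltration $(\mathcal{F}_{2k-1})_{k\ge 1}$ and satisfies $\E[Z_{2k-1}\mid \mathcal{F}_{2k-3}]\le m$; the even subsequence behaves identically relative to $(\mathcal{F}_{2k})_{k\ge 1}$. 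Within each parity class, the conditional mean of each term given the $\sigma$-algebra generated by the preceding term of the same parity is bounded by $m$, which is exactly the one-step predictable structure required by the martingale form of Bernstein's inequality.

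For the odd subsequence I would form the centered increments $M_k := Z_{2k-1} - \E[Z_{2k-1}\mid \mathcal{F}_{2k-3}]$. Since $Z_{2k-1}\in[0,B]$ we get $|M_k|\le B$, and the conditional second moment satisfies $\E[Z_{2k-1}^2\mid \mathcal{F}_{2k-3}]\le B\,\E[Z_{2k-1}\mid \mathcal{F}_{2k-3}]\le Bm$, giving a deterministic bound on the predictable quadratic variation, $V_{N/2}\le NBm/2 \le B\lambda/4$, where the last step uses the hypothesis $\lambda\ge 2Nm$. The compensator satisfies $\sum_{k=1}^{N/2}\E[Z_{2k-1}\mid \mathcal{F}_{2k-3}]\le Nm/2\le \lambda/4$, so the event $\{\sum_k Z_{2k-1}\ge \lambda/2\}$ is contained in $\{\sum_k M_k\ge \lambda/4\}$.

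Applying the martingale form of Bernstein's inequality (Freedman's inequality) with $t=\lambda/4$ and the deterministic variance bound $v=B\lambda/4$ then yields
\begin{equation}
\Pr\!\left(\sum_{k=1}^{N/2} M_k \ge \lambda/4\right) \le \exp\!\left(-\frac{(\lambda/4)^2}{2\bigl(B\lambda/4 + B(\lambda/4)/3\bigr)}\right) = \exp\!\left(-\frac{3\lambda}{32B}\right),
\end{equation}
and the same bound holds for the even sub-sum by the symmetric argument. A union bound over the two parity events produces the claimed factor of $2$. The only non-routine ingredient is the parity split, which restores a one-step predictable structure from the two-step hypothesis; after that the argument is a direct application of Freedman's inequality, and the main care is the algebraic simplification showing that the choices $t=\lambda/4$, $v=B\lambda/4$ make the exponent collapse to exactly $3\lambda/(32B)$.
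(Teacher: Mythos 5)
Your proposal is correct and follows the same core strategy as the paper: split the sum by parity so that each sub-sum becomes one-step predictable, then bound each half and combine by the union bound. The only difference is that the paper invokes the Bernstein inequality proved in \cite{tang2018balanced} as a black box for each parity class, while you re-derive that step directly from Freedman's inequality with the centering $M_k := Z_{2k-1} - \E[Z_{2k-1}\mid\mathcal{F}_{2k-3}]$ and the parameters $t=\lambda/4$, $v=B\lambda/4$, arriving at the same exponent $3\lambda/(32B)$.
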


\begin{proof}
	By the Union Bound, we have
	\begin{align*}
	&\quad \Pr\left( \sum_{j=1}^N Z_j\geq\lambda \right)\\
	&\leq \Pr\left(\sum_{j=1}^{\frac{N}{2}} Z_{2j} \geq \dfrac{\lambda}{2} \right) + \Pr\left(\sum_{j=1}^{\frac{N}{2}} Z_{2j-1} \geq \dfrac{\lambda}{2} \right)
	\end{align*}
	
	Applying the Bernstein Inequality proved in \cite{tang2018balanced}, we obtain that
	\begin{align*}
	\Pr\left(\sum_{j=1}^{\frac{N}{2}} Z_{2j} \geq \dfrac{\lambda}{2} \right) &\leq \exp\left(-\dfrac{3(\lambda/2)}{16B}\right) = \exp\left(-\dfrac{3\lambda}{32B}\right)\\
	\Pr\left(\sum_{j=1}^{\frac{N}{2}} Z_{2j-1} \geq \dfrac{\lambda}{2} \right) &\leq \exp\left(-\dfrac{3(\lambda/2)}{16B}\right) = \exp\left(-\dfrac{3\lambda}{32B}\right).
	\end{align*}
	
	Combining all the above we prove the result.
\end{proof}

\subsection{A Precise Estimate of Sampling Probability}
We have a sharper characterization of the mixing property from Alon et al. \cite{alon2007non}.

\begin{lemma}\label{lem: mixing}
	Let $V^{(n)}(t)$ be a non-backtracking random walk on $k$-regular $\lambda$-expander graph $G^{(n)}$. Define $P_{u_1, v, u_0}^{(t)}:=\Pr(V^{(n)}(t+1) =v~|~V^{(n)}(0) = u_0, V^{(n)}(1) = u_1)$. Then there exists a constant $c>0$ (which only depends on $\lambda$ and $k$) such that for sufficiently large $n$,
	\begin{align}
	\max_{u_0, u_1, v\in G^{(n)}} \left|P_{u_1, v, u_0}^{(t)} - \dfrac{1}{n}\right|\leq \dfrac{1}{n^2}\qquad \forall t\geq \T
	\end{align}
	
\end{lemma}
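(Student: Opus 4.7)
The plan is to invoke the spectral bound for the non-backtracking random walk established by Alon--Benjamini--Lubetzky--Sodin \cite{alon2007non} and convert it into an $L^\infty$ estimate on vertex probabilities. The natural state space of the NBRW is not the vertex set but the set of $nk$ directed edges of $G^{(n)}$; the NBRW transition operator $B$ sends a directed edge $(u,v)$ uniformly at random to one of the $k-1$ directed edges $(v,w)$ with $w \neq u$. Its stationary distribution $\pi$ is uniform on directed edges, with $\pi_e = 1/(nk)$.

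First, I would cite the spectral bound from \cite{alon2007non}: under the $\lambda$-expander condition, there exists a constant $\rho = \rho(\lambda,k) < 1$ such that, for $n$ large enough, $\|B^t f\|_2 \leq \rho^{\,t}\|f\|_2$ for every $f$ orthogonal to the all-ones vector on directed edges. Combined with the standard Cauchy--Schwarz style pointwise conversion $|B^t(e,e') - \pi_{e'}| \leq \rho^{\,t}\sqrt{\pi_{e'}/\pi_e}$ (here $\sqrt{\pi_{e'}/\pi_e} = 1$ since $\pi$ is uniform), this gives
\begin{align}
\bigl|B^t\bigl((u_0,u_1),(w,v)\bigr) - \tfrac{1}{nk}\bigr| \leq \rho^{\,t}
\end{align}
for every pair of directed edges.

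Next, I would translate this to vertex probabilities. The event $\{V^{(n)}(t+1) = v\}$ is the disjoint union over the $k$ directed edges $(w,v)$ entering $v$ of the event that the walker traverses that edge at step $t+1$, so
\begin{align}
P_{u_1,v,u_0}^{(t)} = \sum_{w\in N(v)} B^t\bigl((u_0,u_1),(w,v)\bigr).
\end{align}
By the triangle inequality, $\bigl|P_{u_1,v,u_0}^{(t)} - \tfrac{1}{n}\bigr| \leq k\rho^{\,t}$.

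Finally, I would choose any constant $c > 2/\log(1/\rho)$, so that for $t \geq \T = \lfloor c\log n\rfloor$,
\begin{align}
k\rho^{\,t} \leq k\rho^{\,c\log n - 1} = k\rho^{-1}\cdot n^{-c\log(1/\rho)} \leq \tfrac{1}{n^2}
\end{align}
for $n$ large enough, which is exactly the claim. The main obstacle is really bookkeeping: ensuring that the spectral bound from \cite{alon2007non} applies with a constant $\rho$ uniform in $n$, which follows from Assumption~\ref{ass: highgirthexpander} ($\limsup_n \lambda^{(n)} \leq \lambda < k$), and taking care that the pointwise-to-$L^\infty$ conversion on the directed-edge chain does not introduce hidden polynomial-in-$n$ factors (it does not, because $\pi$ is uniform on directed edges).
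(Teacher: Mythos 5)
The key spectral claim at the heart of your proposal---that there is a constant $\rho=\rho(\lambda,k)<1$ with $\|B^t f\|_2\leq \rho^t\|f\|_2$ for every $f$ orthogonal to the all-ones vector on directed edges---is not correct, and this is exactly the delicate point that the paper's Chebyshev-style argument is designed to handle. The normalized non-backtracking operator $\hat B$ on directed edges of a $k$-regular graph is doubly stochastic but \emph{not} normal (and the NBRW is not reversible). One can check directly that the singular values of $\hat B$ are $1$ (with multiplicity $n$) and $1/(k-1)$ (with multiplicity $n(k-1)$): the block of $\hat B^{\mathsf T}\hat B$ indexed by the $k$ out-edges of a fixed vertex $u$ equals $\frac{1}{k-1}I_k+\frac{k-2}{(k-1)^2}(J_k-I_k)$, with eigenvalues $1$ and $1/(k-1)^2$. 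The singular-value-$1$ space is spanned by the tail-indicator functions $\mathbbm{1}_{\{\text{tail}=u\}}$, which has a nontrivial ($(n-1)$-dimensional) intersection with $\mathbf{1}^\perp$. Hence $\|\hat B|_{\mathbf 1^\perp}\|_2=1$ exactly, already contradicting the claimed bound at $t=1$. Because $\hat B$ is non-normal, knowing that its spectral radius on $\mathbf 1^\perp$ is $<1$ (which is true and follows from Ihara--Bass: the nontrivial eigenvalues are $\frac{\mu\pm\sqrt{\mu^2-4(k-1)}}{2(k-1)}$ over adjacency eigenvalues $\mu$, together with $\pm\frac{1}{k-1}$) does \emph{not} give geometric decay of the operator norm; one generically incurs polynomial-in-$t$ corrections from (near-)Jordan blocks, with constants that could a priori depend on $n$.

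The Cauchy--Schwarz pointwise conversion you invoke also presupposes self-adjointness of the transition kernel in $L^2(\pi)$, i.e.\ reversibility, which fails for NBRW. This is why Alon et al.\ (and the paper, following Lemma 5 of \cite{tang2018balanced}) instead exploit the explicit recursion $A^{(t)}=A\,A^{(t-1)}-(k-1)A^{(t-2)}$ relating non-backtracking walk counts to Chebyshev-like polynomials of the adjacency matrix. That route yields an $n$-uniform bound of the shape $\max_{u,v}\bigl|\tilde P_{u,v}^{(t)}-\frac1n\bigr|\leq C(t+1)\beta^t$ with $\beta=\beta(\lambda,k)<1$, and the paper then handles the conditioning on the first step $V(1)=u_1$ by the telescoping identity $A^{(t)}_{u_1,v,u_0}=A^{(t)}_{u_1,v}-A^{(t-1)}_{u_0,v,u_1}$ iterated, accumulating an extra factor of $t$. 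The polynomial-in-$t$ prefactor is harmless for the final conclusion (enlarge $c$), but it must be there. Your directed-edge framing of the conditioning on $V(1)=u_1$ is genuinely cleaner than the paper's telescoping (you simply start the edge chain at $(u_0,u_1)$ and sum over the $k$ in-edges of $v$), and that part of your plan is a nice simplification; but to make the argument go through you would need to replace the asserted pure-geometric $L^2$ contraction and reversible-chain pointwise conversion with an $n$-uniform bound on $\|\hat B^t|_{\mathbf 1^\perp}\|_{\infty\to\infty}$ (or $\|\hat B^t|_{\mathbf 1^\perp}\|_2$), allowing a $\mathrm{poly}(t)$ prefactor, which in effect reconstructs the Chebyshev computation.
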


\begin{proof}
	The proof utilizes the same observations as in \cite{alon2007non}. The result here strengthens Lemma 5 in \cite{tang2018balanced}.
	
	Let $A_{u, v}^{(t)}$ denote the number of non-backtracking walks of length $t$ from $u$ to $v$.
	Let $\tilde{P}^{(t)}$ be the $t$-step transition probability matrix of a non-backtracking random walk, where the first step of the random walk is to a uniform random neighbor of the starting vertex. We have $\tilde{P}^{(t)} = \frac{A^{(t)}}{k(k-1)^{t-1}}$.
	
	From the proof of Lemma 5 in \cite{tang2018balanced}, we have the estimate
	\begin{align}
	&\quad\max_{u, v}\left|\tilde{P}_{u,v}^{(t)} - \dfrac{1}{n} \right|\\
	&\leq \dfrac{k-1}{k}(t+1)\beta^t + \dfrac{1}{k(k-1)}(t-1)\beta^{t-2}\qquad t\geq 2
	\end{align}
	where $\frac{1}{\sqrt{k-1}}<\beta < 1$ is a constant which depends on $\lambda$ and $k$.
	
	We immediately obtain
	\begin{align}
	&\quad \max_{u, v}\left|A_{u,v}^{(t)} - \dfrac{k(k-1)^{t-1}}{n} \right|\\
	&\leq (t+1)[(k-1)\beta]^t +(t-1)[(k-1)\beta]^{t-2}\qquad t\geq 2
	\end{align}
	
	Our precise estimate will be based on the following observation: Let $A_{u_1, v, u_0}^{(t)}$ denote the number of non-backtracking walks of length $t$ from $u_1$ to $v$ which avoid $u_0$ in the first step. By a counting argument, we establish
	\begin{equation}
	A_{u_1,v, u_0}^{(t)} = A_{u_1, v}^{(t)} - A_{u_0, v, u_1}^{(t-1)}
	\end{equation}
	
	Applying the above observation iteratively, we obtain
	\begin{align*}
	A_{u_1,v, u_0}^{(t)} &= A_{u_1, v}^{(t)} - A_{u_0, v}^{(t-1)} + A_{u_1, v}^{(t-2)} - A_{u_0, v}^{(t-3)} \\&\quad + \cdots + (-1)^t A_{u_b, v}^{(2)} + (-1)^{t+1} A_{u_{1-b}, v, u_b}^{(1)}
	\end{align*}
	where $b=1$ if $t$ is even, and $b=0$ otherwise.
	
	We have $|(-1)^{t+1} A_{u_{1-b}, v, u_b}^{(1)}|\leq 1$, hence by the triangle inequality,
	\begin{align*}
	&\quad \left| A_{u_1,v, u_0}^{(t)} - \sum_{j=0}^{t-2} (-1)^{j} \dfrac{k(k-1)^{t-1-j}}{n} \right|\\
	&\leq \sum_{j=0}^{t-2}\left|A_{u_{b(j)}, v}^{(t-j)} - \dfrac{k(k-1)^{t-1-j}}{n} \right| + 1
	\end{align*}
	where $b(j)=1$ if $j$ is even, and $b(j)=0$ otherwise.
	
	Hence we have
	\begin{equation}\label{eqrwprob}
	\begin{split}
	&\quad \max_{u_0, u_1, v}\left| A_{u_1,v, u_0}^{(t)} - \sum_{j=0}^{t-2} (-1)^{j} \dfrac{k(k-1)^{t-1-j}}{n} \right|\\
	&\leq \sum_{j=0}^{t-2}  (t-j+1)[(k-1)\beta]^{t-j} \\
	&\quad + \sum_{j=0}^{t-2} (t-j-1)[(k-1)\beta]^{t-j-2}  + 1\\
	&\leq t [(k-1)\beta]^t \sum_{j=0}^{t-2} \left( [(k-1)\beta]^{-j} + [(k-1)\beta]^{-j-2} \right) + 1\\
	&\leq t [(k-1)\beta]^t \sum_{j=0}^{t-2} \left( (\sqrt{k-1})^{-j} + (\sqrt{k-1})^{-j-2} \right) + 1\\
	&\leq 2t(t-1) [(k-1)\beta]^t + 1
	\end{split}
	\end{equation}
	
	We compute
	\begin{equation}
	\begin{split}
	&\quad \sum_{j=0}^{t-2} (-1)^{j} \dfrac{k(k-1)^{t-1-j}}{n} \\
	&= \dfrac{k(k-1)^{t-1}}{n}\cdot  \dfrac{1-(-k+1)^{-t+1}}{1-(-k+1)^{-1}} \\
	&= \dfrac{(k-1)^{t}}{n}[1-(-k+1)^{-t+1}]
	\end{split}
	\end{equation}
	
	Denote $P_{u_1,v, u_0}^{(t)} = \Pr(V^{(n)}(t+1) =v~|~V^{(n)}(0) = u_0, V^{(n)}(1) = u_1)$. Dividing both sides of \eqref{eqrwprob} by $(k-1)^t$ we obtain
	\begin{align*}
	\max_{u_0, u_1, v}\left| P_{u_1,v, u_0}^{(t)} - \dfrac{1-(-k+1)^{-t+1}}{n} \right|\leq  2t(t-1) \beta^t + (k-1)^{-t}
	\end{align*}
	
	Therefore
	\begin{align}
	&\quad \max_{u_0, u_1, v}\left| P_{u_1,v, u_0}^{(t)} - \dfrac{1}{n} \right|\\
	&\leq  2t(t-1) \beta^t + (k-1)^{-t} + \left| \dfrac{(-k+1)^{-t+1}}{n}  \right| \\
	&= 2t(t-1) \beta^t + \left(1+\dfrac{1}{n}\right)(k-1)^{-t+1}\\
	&\leq \left[2t(t-1) + 2(k-1)\right] \beta^t\qquad \qquad \text{(Since $\beta \geq \dfrac{1}{k-1}$)}
	\end{align}
	
	The rest of the proof is similar to that of Lemma 5 in \cite{tang2018balanced}. Observe that RHS of above is decreasing in $t$ for sufficiently large $t$. Pick $c = -\frac{3}{\log \beta }$ and set $\tau = \lfloor c\log n\rfloor$, for sufficiently large $n$ we have
	\begin{align*}
	&\quad \max_{u_0, u_1, v}\left| P_{u_1,v, u_0}^{(t)} - \dfrac{1}{n} \right|\\
	&\leq \left[2\tau(\tau-1) + 2(k-1)\right] \beta^\tau = O\left(\dfrac{(\log n)^2}{n^3}\right)\qquad \forall t\geq \T
	\end{align*}
	
	Hence, for sufficiently large $n$, we have
	\begin{align*}
	\max_{u_0, u_1, v}\left| P_{u_1,v, u_0}^{(t)} - \dfrac{1}{n} \right|&\leq \dfrac{1}{n^2}\qquad\qquad  \forall t\geq \lfloor c\log n\rfloor 
	\end{align*}
\end{proof}
\subsection{A Variant of the Foster-Lyapunov Theorem}
The following extensions of the Foster-Lyapunov Theorem will also be used.

\begin{lemma}\label{lem: tsfl}
	An irreducible Markov Chain $\{X_j\}_{j\in\mathbb{N}}$ is positive recurrent if there exists a function $V:\mathcal{S}\mapsto \mathbb{R}_+$, positive integers $1\leq L< K$, and a finite set $B\subset \mathcal{S}$ satisfying the following conditions:
	\begin{align}
	&\E[V(X_{k+1})] < +\infty\quad \text{whenever}\quad \E[V(X_k)] < +\infty\\
	&\E[V(X_{k+K}) - V(X_{k+L})~|~X_k=x] \leq -\epsilon + A\mathbbm{1}_{ B }(x)\label{negdrift}
	\end{align}
	for some $\epsilon>0$ and $A<+\infty$.
\end{lemma}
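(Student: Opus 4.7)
The plan is to reduce the lemma to the classical Foster-Lyapunov theorem by replacing $V$ with an auxiliary Lyapunov function whose \emph{one-step} drift coincides with the multi-step expression appearing in \eqref{negdrift}. Specifically, I would introduce
\[\tilde{V}(x) := \sum_{j=L}^{K-1} \E[V(X_j) \mid X_0 = x], \qquad x\in\mathcal{S}.\]

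Before using $\tilde{V}$, I would verify that it is well-defined and $\mathbb{R}_+$-valued. Starting from the point mass at any $x$ we have $\E[V(X_0)\mid X_0=x]=V(x)<\infty$, so by iterating the hypothesis that $\E[V(X_{k+1})]<\infty$ whenever $\E[V(X_k)]<\infty$ we obtain $\E[V(X_j)\mid X_0=x]<\infty$ for every $0\leq j\leq K-1$. Hence $\tilde{V}(x)$ is a finite sum of finite, non-negative terms.

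Next I would compute the one-step drift of $\tilde{V}$ along the chain. By time-homogeneity and the tower property,
\[\E\bigl[\tilde{V}(X_{k+1})\mid X_k=x\bigr]=\sum_{j=L}^{K-1}\E\bigl[V(X_{j+1})\mid X_0=x\bigr]=\sum_{j=L+1}^{K}\E\bigl[V(X_j)\mid X_0=x\bigr],\]
so the sum telescopes against $\tilde{V}(x)$ to yield
\[\E\bigl[\tilde{V}(X_{k+1})-\tilde{V}(X_k)\mid X_k=x\bigr]=\E\bigl[V(X_{K})-V(X_{L})\mid X_0=x\bigr]\leq -\epsilon + A\mathbbm{1}_B(x),\]
where the final inequality is \eqref{negdrift} applied with time origin shifted to $0$.

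Thus $\tilde{V}$ satisfies the classical one-step Foster drift condition off the finite set $B$, and irreducibility of $\{X_j\}$ then yields positive recurrence by the standard Foster-Lyapunov theorem. The argument is essentially a bookkeeping reduction; the only mild subtlety is that condition~(1) is needed to ensure that $\E[V(X_j)\mid X_0=x]$ remains finite through the first $K$ steps so that $\tilde{V}$ is pointwise finite, but beyond this there is no genuine analytic obstacle.
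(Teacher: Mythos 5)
Your proof is correct, and it takes a genuinely different route from the paper. The paper proves the lemma ``from scratch'': it fixes $x_0\in B$, defines the return time $\tau=\inf\{t:X_t\in B\}$, multiplies the drift inequality by $\mathbbm{1}_{\{\tau>k\}}$, telescopes over $k$, and deduces $\E[\tau]<\infty$, then invokes the finite-mean-return-time criterion for positive recurrence. You instead reduce the multi-step drift condition to the classical \emph{one-step} Foster--Lyapunov theorem by constructing the auxiliary function $\tilde V(x)=\sum_{j=L}^{K-1}\E[V(X_j)\mid X_0=x]$. Your telescoping identity $\E[\tilde V(X_{k+1})-\tilde V(X_k)\mid X_k=x]=\E[V(X_K)-V(X_L)\mid X_0=x]$ is exactly right, and you correctly use condition~(1) (applied along the chain started deterministically at $x$) to ensure $\tilde V$ is pointwise finite; the required finiteness of $\E[\tilde V(X_1)\mid X_0=x]$ for $x\in B$ then follows from the drift bound itself. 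What your approach buys is modularity and brevity: the entire lemma becomes a change of Lyapunov function followed by a citation of Foster's criterion, whereas the paper re-derives the Foster argument inline. What the paper's approach buys is self-containment (it only needs the elementary ``finite mean return time to a finite set'' fact) and, because essentially the same telescoping computation is reused verbatim in Lemma~\ref{lem: momentbound}, it avoids introducing a second Lyapunov function there. Either proof would be acceptable.
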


\begin{proof}
	WLOG, assume that the set $B$ is non-empty.
	
	Let $\mathcal{F}_k:=\sigma(X_0, X_1,\cdots,X_k)$. 
	Define $\tau :=\inf \{t\in\mathbb{N}~|~X_t\in B \}$. $\tau$ is a stopping time with respect to the filtration $(\mathcal{F}_t)_{t\in\mathbb{N}}$.
	
	Fix $x_0\in B$. Start the chain from $X_0=x_0$. We have $\E[V(X_0)]=V(x_0) < +\infty$. By assumption, we then have $\E[V(X_k)] < +\infty$ for all $k\in\mathbb{N}$. As a consequence, $\E[V(X_{l})~|~\mathcal{F}_k]<+\infty$ a.s. for all $l,k\in\mathbb{N}$.
	
	Using \eqref{negdrift}, we obtain
	\begin{align*}
	\E[V(X_{k+K})~|~\mathcal{F}_k] + \epsilon \leq \E[V(X_{k+L})~|~\mathcal{F}_k] + A \mathbbm{1}_{B}(X_k)
	\end{align*}
	
	Multiply both sides by $\mathbbm{1}_{\{\tau > k \}}$, using the fact that $\mathbbm{1}_{\{\tau > k \}}$ is $\mathcal{F}_k$-measurable, we obtain
	\begin{align*}
	&\quad \E[V(X_{k+K})\mathbbm{1}_{\{\tau > k \}}~|~\mathcal{F}_k] + \epsilon \mathbbm{1}_{\{\tau > k \}} \\&\leq \E[V(X_{k+L}) \mathbbm{1}_{\{\tau > k \}}~|~\mathcal{F}_k]  + A \mathbbm{1}_{B}(X_k) \mathbbm{1}_{\{\tau > k \}}
	\end{align*}
	
	Using $\mathbbm{1}_{\{\tau > k \}} \geq \mathbbm{1}_{\{\tau > k + K - L \}}$, and the fact that $\mathbbm{1}_{B}(X_k) \mathbbm{1}_{\{\tau > k \}} = \mathbbm{1}_{\{k=0\}}$, we have
	\begin{align*}
	&\quad \E[V(X_{k+K})\mathbbm{1}_{\{\tau > k+K-L \}}~|~\mathcal{F}_k] + \epsilon \mathbbm{1}_{\{\tau > k \}} \\&\leq \E[V(X_{k+L}) \mathbbm{1}_{\{\tau > k \}}~|~\mathcal{F}_k]  + A \mathbbm{1}_{\{k=0\}}
	\end{align*}
	
	Taking expectation of both sides, we obtain
	\begin{align*}
	&\quad \E[V(X_{k+K})\mathbbm{1}_{\{\tau > k+K-L \}}] + \epsilon \Pr(\tau > k )\\ &\leq \E[V(X_{k+L}) \mathbbm{1}_{\{\tau > k \}}]  + A \mathbbm{1}_{\{k=0\}}
	\end{align*}
	
	Let $m\in\mathbb{N}$, summing both sides over $k=0,1,\cdots, m$ we obtain
	\begin{align*}
	&\quad \sum_{k=K-L}^{m+K-L}\E[V(X_{k+L})\mathbbm{1}_{\{\tau> k \}}] + \epsilon \sum_{k=0}^m\Pr(\tau > k ) \\
	&\leq \sum_{k=0}^{m}\E[V(X_{k+L}) \mathbbm{1}_{\{\tau > k \}}]  + A
	\end{align*}
	
	Every term in the above inequality is finite, hence we can rearrange the inequality to obtain
	\begin{align*}
	&\quad \epsilon \sum_{k=0}^m\Pr(\tau > k ) \\
	&\leq A + \sum_{k=0}^{K-L-1}\E[V(X_{k+L}) \mathbbm{1}_{\{\tau > k \}}] \\
	&\qquad - \sum_{k=m+1}^{m+K-L}\E[V(X_{k+L}) \mathbbm{1}_{\{\tau > k \}}]\\
	&\leq A + \sum_{k=0}^{K-L-1}\E[V(X_{k+L}) \mathbbm{1}_{\{\tau > k \}}]
	\end{align*}
	where the last inequality is true since $V(x)\geq 0$ for all $x\in S$.
	
	Take $m\rightarrow\infty$, we obtain
	\begin{align*}
	\epsilon \E[\tau] = \epsilon \sum_{k=0}^\infty \Pr(\tau > k ) \leq  A + \sum_{k=0}^{K-L-1}\E[V(X_{k+L})] < +\infty
	\end{align*}
	
	Hence starting from $x_0\in B$, the expected hitting time of $B$ is finite. Using Lemma 2.1.3 from \cite{hajek2006notes} we conclude that $X_j$ is positive recurrent.
	
\end{proof}

\begin{lemma}[Moment Bound]\label{lem: momentbound}
	Suppose $\{X_j\}_{j\in\mathbb{N}}$ is positive recurrent, $V, f, g$ are non-negative functions on $\mathcal{S}$, $1\leq L < K$, and suppose
	\begin{align}
	&\E[V(X_{k+1})] < +\infty\quad \text{whenever}\quad \E[V(X_k)] < +\infty\\
	&\E[V(X_{k+K}) - V(X_{k+L})~|~X_k=x] \leq -f(x) + g(x)\quad \forall x\in \mathcal{S}\label{negdrift1}
	\end{align}
	
	Let $\hat{X}$ have the same distribution as the stationary distribution of $\{X_j\}_{j\in\mathbb{N}}$. Then $\E[f(\hat{X})]\leq \E[g(\hat{X})]$.
\end{lemma}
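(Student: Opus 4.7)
The plan is to run the chain from a fixed initial state, take expectations of the drift inequality, sum over time to exploit a telescoping cancellation, and then pass to the stationary limit via the Cesàro/mean ergodic theorem for positive recurrent Markov chains.

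First I would fix any $x_0 \in \mathcal{S}$ and start the chain at $X_0 = x_0$; since $V$ is $\mathbb{R}_+$-valued we have $V(x_0) < \infty$, so the given finiteness hypothesis yields $a_k := \E[V(X_k) \mid X_0 = x_0] < \infty$ for every $k \geq 0$ by induction on $k$. Writing $\phi_k := \E[f(X_k) \mid X_0 = x_0]$ and $\gamma_k := \E[g(X_k) \mid X_0 = x_0]$, taking $\E[\cdot \mid X_0 = x_0]$ on both sides of \eqref{negdrift1} via the tower property gives
\begin{equation}
a_{k+K} - a_{k+L} \;\leq\; -\phi_k + \gamma_k \qquad \forall\, k \geq 0.
\end{equation}

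Next I would sum this inequality for $k = 0, 1, \ldots, m$. The left-hand side telescopes to $\sum_{k=m+L+1}^{m+K} a_k - \sum_{k=L}^{K-1} a_k$, which, using only $V \geq 0$, is bounded below by $-C_0$, where $C_0 := \sum_{k=L}^{K-1} a_k < \infty$ is an $m$-independent constant. Rearranging,
\begin{equation}
\sum_{k=0}^m \phi_k \;\leq\; C_0 + \sum_{k=0}^m \gamma_k,
\end{equation}
after which dividing by $m+1$ makes the $C_0/(m+1)$ term vanish as $m \to \infty$. To take the limit of the two Cesàro averages I would split on whether $\E[g(\hat{X})]$ is finite. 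If $\E[g(\hat{X})] = +\infty$ the conclusion is trivial. Otherwise $g \in L^1(\pi)$, and the mean ergodic theorem for positive recurrent Markov chains (e.g.\ Meyn--Tweedie, Thm.~17.0.1) yields $\tfrac{1}{m+1}\sum_{k=0}^m \gamma_k \to \E[g(\hat{X})]$; for the $f$ side, Birkhoff's theorem gives $\tfrac{1}{m+1}\sum_{k=0}^m f(X_k) \to \E[f(\hat{X})]$ almost surely, and Fatou's lemma then supplies $\E[f(\hat{X})] \leq \liminf_m \tfrac{1}{m+1}\sum_{k=0}^m \phi_k$. Combining these bounds with the displayed inequality gives $\E[f(\hat{X})] \leq \E[g(\hat{X})]$.

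The main obstacle is the lack of $L^1(\pi)$-integrability of $V$ (and possibly of $f$ and $g$): one cannot simply start the chain in $\pi$ and cancel $\E_\pi[V(X_{k+K})] = \E_\pi[V(X_{k+L})]$, since both might be infinite. The telescoping argument from a fixed $x_0$ bypasses this by relying only on $V \geq 0$ to absorb the boundary terms into a finite constant, while the dichotomy on $\E[g(\hat{X})]$ avoids needing any a-priori integrability of $f$ or $g$ under $\pi$.
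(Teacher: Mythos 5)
Your overall strategy is a genuinely different route from the paper's. The paper multiplies the drift inequality by $\mathbbm{1}_{\{\tau>k\}}$, sums, and then specializes $\tau$ to the time $T_m$ of the $m$-th return to a fixed state; the point of using $T_m$ is that Kac's cycle formula turns the time-sums into \emph{exact} identities, $\E[\sum_{k=0}^{T_m-1}f(X_k)]=m\E[T_1]\E[f(\hat X)]$ and similarly for $g$, with no limit theorems needed. You instead sum over a deterministic horizon $m$, telescope, and try to pass to the limit of Ces\`aro averages. Your telescoping step is correct, and the Fatou/Birkhoff argument for the $f$-side is fine (for $f\ge 0$ the a.s.\ time-average convergence to $\pi(f)\in[0,\infty]$ holds even when $\pi(f)=\infty$, by truncation and monotone convergence).

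The gap is on the $g$-side. You assert that the mean ergodic theorem (citing Meyn--Tweedie Thm.~17.0.1) gives $\tfrac{1}{m+1}\sum_{k=0}^m \gamma_k \to \E[g(\hat X)]$ when $g\in L^1(\pi)$, but that theorem is the almost-sure SLLN for time averages, not a statement about expectations. Passing from a.s.\ convergence of $\tfrac{1}{m+1}\sum_{k=0}^m g(X_k)$ to convergence of $\tfrac{1}{m+1}\sum_{k=0}^m \E_{x_0}[g(X_k)]$ requires a uniform-integrability or dominated-convergence input that is not available from positive recurrence and $\pi(g)<\infty$ alone; the $f$-norm ergodic theorem (Meyn--Tweedie Ch.~14) would give it but needs $g$-regularity, a strictly stronger hypothesis than you have. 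The upper bound $\limsup_m \tfrac{1}{m+1}\sum_{k=0}^m \gamma_k \le \pi(g)$ is in fact true, but the argument is not a one-line citation: one needs to bound $\sum_{k=0}^{n-1}g(X_k)\le\sum_{i=1}^{N(n)+1}G_i$ over the regeneration cycles, apply Wald's identity to the stopping time $N(n)+1$, and invoke the elementary renewal theorem to get $\E[N(n)+1]/n\to 1/\E[T_1]$. That renewal step is essentially the content of the paper's Kac-formula argument in disguise, so either supply it explicitly or switch to stopping at the $m$-th regeneration time as the paper does, which converts the asymptotic estimate into an exact identity and avoids the issue entirely.
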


\begin{proof}
	Let $\mathcal{F}_k:=\sigma(X_0, X_1,\cdots, X_k)$. 
	
	Let $x_0\in\mathcal{S}$ be fixed. Set $X_0\equiv x_0$. 	
	We have $\E[V(X_0)]=V(x_0) < +\infty$. By assumption, we then have $\E[V(X_k)] < +\infty$ for all $k\in\mathbb{N}$. As a consequence, $\E[V(X_{l})~|~\mathcal{F}_k]<+\infty$ a.s. for all $l,k\in\mathbb{N}$. Hence
	\begin{equation}
	\E[V(X_{k+K})~|~\mathcal{F}_k] + f(X_k) \leq \E[V(X_{k+L})~|~\mathcal{F}_k] + g(X_k)
	\end{equation}
	
	Let $\tau$ be any stopping time w.r.t. $\{\mathcal{F}_k\}_{k=0}^\infty$. Multiply both sides by $\mathbbm{1}_{\{\tau > k \}}$, using the fact that $\mathbbm{1}_{\{\tau > k \}}$ is $\mathcal{F}_k$-measurable, we obtain
	\begin{align*}
	&\quad \E[V(X_{k+K})\mathbbm{1}_{\{\tau > k \}}~|~\mathcal{F}_k] + f(X_k) \mathbbm{1}_{\{\tau > k \}} \\
	&\leq \E[V(X_{k+L}) \mathbbm{1}_{\{\tau > k \}}~|~\mathcal{F}_k]  + g(X_k) \mathbbm{1}_{\{\tau > k \}}
	\end{align*}
	
	Using $\mathbbm{1}_{\{\tau > k \}} \geq \mathbbm{1}_{\{\tau > k + K - L \}}$, we have
	\begin{align*}
	&\quad \E[V(X_{k+K})\mathbbm{1}_{\{\tau > k+K-L \}}~|~\mathcal{F}_k] + f(X_k) \mathbbm{1}_{\{\tau > k \}} \\
	&\leq \E[V(X_{k+L}) \mathbbm{1}_{\{\tau > k \}}~|~\mathcal{F}_k]  + g(X_k) \mathbbm{1}_{\{\tau > k \}}
	\end{align*}
	
	Taking expectation of both sides, we obtain
	\begin{align*}
	&\quad \E[V(X_{k+K})\mathbbm{1}_{\{\tau > k+K-L \}}] + \E[f(X_k)\mathbbm{1}_{\{\tau >k\} }] \\
	&\leq \E[V(X_{k+L}) \mathbbm{1}_{\{\tau > k \}}]  + \E[g(X_k)\mathbbm{1}_{\{\tau >k\} }]
	\end{align*}
	
	Let $n\in\mathbb{N}$, summing both sides over $k=0,1,\cdots, n$ we obtain
	\begin{align*}
	&\quad \sum_{k=K-L}^{n+K-L}\E[V(X_{k+L})\mathbbm{1}_{\{\tau> k \}}] + \sum_{k=0}^n\E\left[ f(X_k)\mathbbm{1}_{\{\tau >k\} }\right] \\
	&\leq \sum_{k=0}^{n}\E[V(X_{k+L}) \mathbbm{1}_{\{\tau > k \}}]  + \sum_{k=0}^n\E\left[ g(X_k)\mathbbm{1}_{\{\tau >k\} }\right] 
	\end{align*}
	
	Every term in the above inequality is finite, hence we can rearrange the inequality to obtain
	\begin{align*}
	&\quad \sum_{k=0}^n\E\left[ f(X_k)\mathbbm{1}_{\{\tau >k\} }\right] \\
	&\leq \sum_{k=0}^n\E\left[ g(X_k)\mathbbm{1}_{\{\tau >k\} }\right] + \sum_{k=0}^{K-L-1}\E[V(X_{k+L}) \mathbbm{1}_{\{\tau > k \}}] \\
	&\qquad - \sum_{k=m+1}^{m+K-L}\E[V(X_{k+L}) \mathbbm{1}_{\{\tau > k \}}]\\
	&\leq \sum_{k=0}^n\E\left[ g(X_k)\mathbbm{1}_{\{\tau >k\} }\right] + \sum_{k=0}^{K-L-1}\E[V(X_{k+L}) \mathbbm{1}_{\{\tau > k \}}]
	\end{align*}
	where the last inequality is true since $V(x)\geq 0$ for all $x\in S$.
	
	Take $n\rightarrow\infty$, we obtain
	\begin{align*}
	\E\left[ \sum_{k=0}^{\tau-1} f(X_k) \right] &\leq \E\left[ \sum_{k=0}^{\tau-1} g(X_k) \right] + \sum_{k=0}^{K-L-1}\E[V(X_{k+L})]
	\end{align*}
	
	Let $T_m$ be the time of the $m$-th return to state $x_0$. $T_m$ is a stopping time w.r.t. $\{\mathcal{F}_k\}_{k=0}^\infty$. Hence
	\begin{align*}
	\E\left[ \sum_{k=0}^{T_m-1} f(X_k) \right] &\leq \E\left[ \sum_{k=0}^{T_m-1} g(X_k) \right] + \sum_{k=0}^{K-L-1}\E[V(X_{k+L})]
	\end{align*}
	
	Using the equality of time and statistical averages we have 
	\begin{align*}
	\E\left[ \sum_{k=0}^{T_m-1} f(X_k) \right] &= m\E[T_1] \E[f(\hat{X})]\\
	\E\left[ \sum_{k=0}^{T_m-1} g(X_k) \right] &= m\E[T_1] \E[g(\hat{X})]\\
	\end{align*}
	
	Hence we have
	\begin{align*}
	m\E[T_1] \E[f(\hat{X})] \leq \sum_{k=0}^{K-L-1}\E[V(X_{k+L})] + m\E[T_1] \E[g(\hat{X})]
	\end{align*}
	where $\sum_{k=0}^{K-L-1}\E[V(X_{k+L})] < +\infty$. Dividing both sides by $m\E[T_1]$ and let $m\rightarrow\infty$ we obtain the result.
\end{proof}

\section{Main Results}\label{sec: main}
\subsection{Fluid Limit Approximation for NBRW-Po$d$}\label{sec: fluidlimit}

In this section, we prove our first main result: the queuing system dynamics under NBRW-Po$d$ scheme for a large system can be approximated by the solutions to a system of differential equations, which is the same ODE as that of \emph{power-of-$d$} scheme. We assume that $G^{(n)}$ satisfies assumption \ref{ass: highgirthexpander} throughout this section.

\begin{thm}\label{thm: main}
	Consider the dynamic system $\bx(t)\in [0, 1]^{\mathbb{Z}_+}$ described by the following differential equations:
	\begin{equation}\label{diffeq}
	\begin{split}
	\dfrac{\d x_i}{\d t} &= \lambda (x_{i-1}^d - x_i^d) - (x_i-x_{i+1})\qquad i\geq 1\\
	x_0(t) &\equiv 1
	\end{split}
	\end{equation}
	
	Let $\bX^{(n)}(t) = (X_i^{(n)}(t))_{i\in\mathbb{Z}_+}$ be an infinite dimensional vector, where $X_i^{(n)}(t)$ is the proportion of queues with length exceeding (or equal to) $i$ at time $t$. Suppose that
	\begin{enumerate}[(a)]
		\item Random walkers are initialized to independent uniform random positions.
		\item $\bQ^{(n)}(0)$ is deterministic
		\item $\displaystyle\lim_{n\rightarrow\infty}\|\bX^{(n)}(0) - \bx(0)\|_1 = 0$
		\item $\|\bx(0)\|_1<+\infty$
	\end{enumerate} 
	then for every finite $T>0$
	\begin{equation}
	\lim_{n\rightarrow\infty} \sup_{0\leq t\leq T}\|\bX^{(n)}(t) - \bx(t)\|_1 = 0\qquad a.s.
	\end{equation}
\end{thm}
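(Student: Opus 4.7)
The plan is to establish an approximate semimartingale decomposition for each coordinate $X_i^{(n)}(t)$, show that the predictable drift matches the ODE right-hand side up to an $o(1)$ error, control the martingale term via a maximal inequality, and close the loop with Gronwall applied in the $\ell^1$ topology. First I would write the jumps of $X_i^{(n)}$ explicitly: an upward jump of size $1/n$ occurs at the arrival of job $j$ exactly when the minimum of $Q_{W_1\{j\}}^{(n)}\{j\},\dots,Q_{W_d\{j\}}^{(n)}\{j\}$ equals $i-1$, while a downward jump of size $1/n$ occurs whenever a queue of length $i$ completes a service, at total rate $n(X_i^{(n)} - X_{i+1}^{(n)})$. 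Thus the service contribution to the drift is exactly $-(X_i - X_{i+1})$, matching the ODE. The arrival contribution to the ODE is $\lambda(x_{i-1}^d - x_i^d)$, which is precisely the drift one would obtain if the $d$ sampled queues were i.i.d.\ uniform on $[n]$, so the real work is to show that NBRW sampling reproduces this drift up to a vanishing error.

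To obtain the correct drift under NBRW sampling, I would condition on the filtration $\mathcal{F}_{T_{j-\T}}$ shifted back by $\T$ arrivals. Because the $d$ walkers are driven by independent randomness, their joint law conditional on $\mathcal{F}_{T_{j-\T}}$ factorizes into marginals, and by Lemma~\ref{lem: mixing} each marginal transition probability differs from the uniform $1/n$ by at most $1/n^2$. Summing over the vertices whose queue length at time $T_{j-\T}$ is at least $i$ yields
\begin{equation*}
\Pr\bigl(Q_{W_l\{j\}}^{(n)}\bigl(T_{j-\T}\bigr) \geq i \bigm| \mathcal{F}_{T_{j-\T}}\bigr) = X_i^{(n)}(T_{j-\T}) + O(1/n),
\end{equation*}
and independence of the $d$ walkers multiplies this into the $d$-th power. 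Between $T_{j-\T}$ and $T_j$ only $O(\log n)$ arrival/service events occur (a Poisson tail via Lemma~\ref{lem: bern} handles the service count), so each $X_i^{(n)}$ shifts by $O(\log n/n)$, and replacing $X_i^{(n)}(T_{j-\T})$ by $X_i^{(n)}(T_j^{-})$ in the above identity introduces only this same error. Aggregating over the $O(nT)$ arrivals in $[0,T]$, the total compensator mismatch in the drift of $X_i^{(n)}(t)$ is $O(T\log n/n)$, uniformly in $t\in[0,T]$.

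The residual fluctuation is a sum of near-martingale increments with respect to the shifted filtration; its quadratic variation is bounded by $O(T/n)$ since jumps of size $1/n$ occur at total rate $O(n)$, so a Doob $L^2$ maximal inequality gives $\sup_{t\leq T}|M_i^{(n)}(t)| \to 0$ in probability for each fixed $i$. To upgrade to the $\ell^1$ norm I would control the tail via conservation: $\sum_i X_i^{(n)}(t) \leq \sum_i X_i^{(n)}(0) + \lambda t$ almost surely, because each arrival adds $1/n$ to exactly one $X_i$ and each service removes $1/n$. Combined with assumptions (c) and (d), this yields a uniform-in-$n$ bound on the tail $\sum_{i>K} X_i^{(n)}(t)$ that goes to zero as $K\to\infty$. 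Since the ODE drift $F_i(\bx) = \lambda(x_{i-1}^d - x_i^d) - (x_i - x_{i+1})$ is Lipschitz in $\ell^1$ with constant $O(d+1)$ on bounded sets, Gronwall's inequality yields
\begin{equation*}
\sup_{t\leq T}\|\bX^{(n)}(t) - \bx(t)\|_1 \leq \bigl(\|\bX^{(n)}(0)-\bx(0)\|_1 + \varepsilon_n\bigr)e^{(d+1)T},
\end{equation*}
where $\varepsilon_n$ absorbs the compensator mismatch, the martingale fluctuations, and the tail truncation error, all of which tend to zero.

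The main obstacle, consistent with the authors' own outline, is carrying out the shifted-filtration decomposition rigorously: because $\mathcal{F}_{T_{j-\T}}$ is used as the conditioning for a quantity evaluated at $T_j$, successive increments share part of their lookback windows, so the conditional-mean-zero property does not directly yield an exact martingale. I would handle this by splitting the arrival index into $\T$ interleaved subsequences spaced $\T$ apart, each of which does form a genuine martingale difference sequence with respect to its natural subsampled filtration; Lemma~\ref{lem: bern} (whose two-step-lookback statement hints at the analogous $\T$-step-lookback variant one would actually apply) then provides sub-Gaussian tail control on each subsequence, which a union bound combines. Upgrading the resulting in-probability convergence to the a.s.\ uniform convergence on $[0,T]$ required by the theorem should follow from a Borel--Cantelli argument using the exponential tails of Lemma~\ref{lem: bern} with $B=O(1/n)$, which are summable in $n$ against the polynomial-in-$n$ number of events being union-bounded over.
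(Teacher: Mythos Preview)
Your high-level architecture---semimartingale decomposition, compensator matching via Lemma~\ref{lem: mixing}, martingale concentration, Gronwall, Borel--Cantelli---matches the paper's. The substantive gap is in the drift computation. You correctly obtain
\[
\Pr\bigl(Q_{W_l\{j\}}(T_{j-\T}) \geq i \,\bigm|\, \mathcal{F}_{T_{j-\T}}\bigr) = X_i^{(n)}(T_{j-\T}) + O(1/n),
\]
but the event you must control is $\{\min_l Q_{W_l\{j\}}(T_j^-) = i-1\}$, with the queue evaluated at $T_j^-$, not $T_{j-\T}$. Replacing the \emph{global} statistic $X_i^{(n)}(T_{j-\T})$ by $X_i^{(n)}(T_j^-)$ at cost $O(\log n/n)$ is not the same as replacing the \emph{individual} queue $Q_{W_l\{j\}}(T_{j-\T})$ by $Q_{W_l\{j\}}(T_j^-)$. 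The latter difference records any arrivals deposited at the vertex $W_l\{j\}$ during the window $[j-\T,j)$, and whether such a deposit occurred is correlated with $W_l\{j\}$ through the walk's own trajectory. Because of this, the $d$ indicators $\mathbbm{1}\{Q_{W_l\{j\}}(T_j^-)\geq i\}$ are \emph{not} conditionally independent given $\mathcal{F}_{T_{j-\T}}$, so the product step that produces the $d$-th power is not justified for the time-$T_j^-$ quantities. Your proposal never invokes the girth half of Assumption~\ref{ass: highgirthexpander}, and this is exactly the step where it is indispensable.

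The paper closes this gap with a decoupling device you are missing: two auxiliary processes $\tilde{\bQ}^+,\tilde{\bQ}^-$ that alternate over blocks of $\T$ arrivals, each reset to $\bQ$ at the start of its block and then evolved by services only. On the complement of the event $\mathcal{D}_{j,s}$ that the current walker positions were visited in the previous $2\T$ steps, the true queue at those positions coincides with the auxiliary one (Observation~\ref{obs: 1}); the auxiliary process is conditionally independent of the current walker positions given $\mathcal{G}_{j-1}$, which restores the factorization and yields Lemma~\ref{lem: key}. Claim~\ref{claim: djs} then uses the girth to show $\Pr(\mathcal{D}_{j,s}\mid\mathcal{G}_{j-1})=O(\T/n^\alpha)$. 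Without this (or an equivalent revisit bound), the compensator error you are accumulating is not $O(\log n/n)$ per arrival.

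Two secondary points. First, the inequality $\sum_i X_i^{(n)}(t)\leq \sum_i X_i^{(n)}(0)+\lambda t$ bounds the total $\ell^1$ mass but does not by itself force the tail $\sum_{i>K}X_i^{(n)}(t)$ to vanish uniformly in $n$; the paper instead proves a high-probability per-queue arrival cap (event $\mathcal{B}$, again via girth plus mixing and Lemma~\ref{lem: bern}) and handles the tail of $\|\bM^{a,1}\|_1$ by a separate estimate. Second, your $\T$-fold interleaving over individual arrivals differs from the paper's two-fold interleaving over \emph{blocks} of $\T$ arrivals; the block structure is what pairs with the $\tilde{\bQ}^\pm$ construction and with the two-step-lookback form of Lemma~\ref{lem: bern} and Lemma~\ref{lem: azhoef}, keeping the union bound over subsequences at $2$ rather than $O(\log n)$.
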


\begin{proof}[Proof of Theorem \ref{thm: main}]
	For the inequalities we proved in all the proofs in this section, the inequality should be understood as true for all sufficiently large $n$. If not explicitly stated, then the threshold for sufficiently large $n$ depends only on the system parameters (i.e. $d, k, \alpha$, and $\lambda$).
	
	Let $\mathbf{a}, \mathbf{b}: [0, 1]^{\mathbb{Z}_+}\mapsto [0, 1]^{\mathbb{Z}_+}$ be defined as
	\begin{align*}
	a_i(\bx) &= \begin{cases}
	\lambda(x_{i-1}^d - x_i^d)&i\geq 1\\
	0&i=0
	\end{cases}\\
	b_i(\bx) &= \begin{cases}
	x_i-x_{i+1}&i\geq 1\\
	0&i=0
	\end{cases}
	\end{align*}
	
	Here $\mathbf{a}(\bx)+\mathbf{b}(\bx)$ will be the mean field transition rate for power of $d$-scheme. Here, we separate the analysis for arrival and departure parts.  Both $\mathbf{a}$ and $\mathbf{b}$ are Lipschitz continuous operators with respect to $\ell_1$ norm, where $\mathbf{a}$ has Lipschitz constant $2\lambda d$ and $\mathbf{b}$ has Lipschitz constant $2$.
	
	For $i\geq 1$, define $A_i(t)$ to be the total number of arrival jobs that are dispatched to a server with load $i-1$ (just before this arrival) before (including) time $t$. Define $A_0(t) \equiv 0$. Also define $B_i(t)$ to be the number of departures from queues with load $i$ (just before this departure) before (including) time $t$. Let $\mathbf{A}(t), \mathbf{B}(t)$ denote the corresponding infinite dimensional vectors. We have the relation
	\begin{equation}
	\bX(t) = \bX(0) + \dfrac{\mathbf{A}(t)}{n} - \dfrac{\mathbf{B}(t)}{n}
	\end{equation}
	
	Now, define
	\begin{equation}
	\bM(t) = \bX(t) - \bX(0) - \int_0^t [\mathbf{a}(\bX(u)) - \mathbf{b}(\bX(u))]\d u
	\end{equation}
	
	The idea of the proof is to bound $\|\bM(t)\|_1$ and then apply Gronwall's lemma to bound $\|\bX(t) - \bx(t)\|_1$. We write
	\begin{align*}
	\bM(t) &= \left[\dfrac{\mathbf{A}(t)}{n} - \int_0^t \mathbf{a}(\bX(u))\d u\right] - \left[\dfrac{\mathbf{B}(t)}{n}-\int_0^t \mathbf{b}(\bX(u))\d u \right]\\
	&=:\bM^a(t) - \bM^b(t)
	\end{align*}
	where $\bM^a(t)$ is the ``arrival part", and $\bM^b(t)$ is the ``service" part. We will bound $\|\bM^a(t)\|_1$ and $\|\bM^b(t)\|_1$ separately.
	
	Now we define two auxiliary queuing processes $\tilde{\bQ}^+(t)$ and $\tilde{\bQ}^-(t)$, which are coupled with $\bQ(t)$.
	
	Recall that an alternative description of $\bQ(t)$ is as follows: $\bQ(t)$ can be obtained from a discrete-time process $\bQ[j]$ along with holding times, where the holding times are \emph{i.i.d.} exponentially distributed with mean $\frac{1}{(\lambda+1)n}$. The evolution of $\bQ[j]$ can be described as follows
	\begin{equation}
	\bQ[j+1] = (\bQ[j] + \mathbf{R}[j](1-\Lambda[j]) - \mathbf{S}[j]\Lambda[j] )_+
	\end{equation}
	where $\Lambda[j], j=0,1,2,\cdots$ are \emph{i.i.d.} Bernoulli random variables with mean $\frac{1}{\lambda+1}$, $\mathbf{S}[j]$ are i.i.d. uniformly distributed on $\{e_i\}_{i=1}^n$ which indicates potential services, $\mathbf{R}[j]\in \{e_i \}_{i=1}^n$ are potential arrivals to the system. Note that $\{\Lambda[j]\}_{j=0}^\infty$ and $\{\mathbf{S}[j]\}_{j=0}^\infty$ are mutually independent, and also independent of $\{\mathbf{Q}[j']\}_{j'=0}^j$. 
	
	In the discrete process, let $\tilde{T}[s]$ be the arrival timestamp of the $s$-th job (i.e. the $s$-th timestamp such that $\Lambda[j]=0$.) Let $c>0$ be a constant such that Lemma \ref{lem: mixing} holds. Define	 $\tilde{\bQ}^{ + }[0] = \tilde{\bQ}^{ - }[0] = \bQ[0]$ and
	\begin{equation}
	\tilde{\bQ}^{+}[j+1] = \begin{cases}
	\bQ[j]&j= \tilde{T}[2s\T] \\
	&\text{for some }s\in\mathbb{Z}_+\\
	(\tilde{\bQ}^{+}[j] - \mathbf{S}[j]\Lambda[j] )_+&\text{otherwise}
	\end{cases}
	\end{equation} 
	
	\begin{equation}
	\tilde{\bQ}^{- }[j+1] = \begin{cases}
	\bQ[j]&j = \tilde{T}[(2s+1)\T] \\
	&\text{for some }s\in\mathbb{Z}_+\\
	(\tilde{\bQ}^{- }[j] - \mathbf{S}[j]\Lambda[j] )_+&\text{otherwise}
	\end{cases}
	\end{equation}
	
	Finally, we define
	\begin{equation}
	\tilde{\bQ}[j] = \begin{cases}
	\tilde{\bQ}^{- }[j]&\tilde{T}[2s\T] \leq j<\tilde{T}[(2s+1)\T]\\
	&\text{ for some }s\in\mathbb{N}\\
	\tilde{\bQ}^{+}[j]&\tilde{T}[(2s-1)\T ] \leq j<\tilde{T}[2s\T]\\
	&\text{ for some }s\in\mathbb{N}\\
	\tilde{\bQ}^{-}[j]&j<\tilde{T}[\T]
	\end{cases}
	\end{equation}
	
	Under this coupling, we have the following observation:
	\begin{observation}\label{obs: 1}
		Suppose that $T_j\leq t< T_{j+1}$, then if the random walkers do not visit a vertex $i$ within time $[T_{j-1}, t)$, then $\tilde{Q}_i(t) = Q_i(t)$.
	\end{observation}
	
	Define $\tilde{\bX}(t)$ by
	\begin{equation}
	\tilde{X}_i(t) = \dfrac{1}{n}\sum_{j=1}^n \mathbbm{1}_{\{\tilde{Q}_j(t) \geq i \}}\qquad i\in\mathbb{Z}_+
	\end{equation}
	
	Similarly, define $\tilde{\bX}^+(t),\tilde{\bX}^-(t)$ to be the proportion vectors corresponding to $\tilde{\bQ}^+(t)$ and $\tilde{\bQ}^-(t)$, respectively. 
	
	We further define $\overline{\bX}(t)$ as follows:
	\begin{align*}
	\overline{\bX}(t) = \tilde{\bX}(T_j)\qquad T_j\leq t<T_{j+1}
	\end{align*}
	where $T_j$ is the (continuous) time of the $j$-th arrival. In other words, $\overline{\bX}(t)$ is obtained from sampling and holding $\tilde{\bX}(t)$ at arrival events. Effectively, $\overline{\bX}(t)$ is a process which accumulates services at arrival times.
	
	We write
	\begin{align*}
	\bM^a(t) &= \left[\dfrac{\mathbf{A}(t)}{n}-\int_0^t \mathbf{a}(\overline{\bX}(u))\d u \right] \\
	&\quad + \int_0^t [\mathbf{a}(\overline{\bX}(u)) -\mathbf{a}(\tilde{\bX}(u)) ]\d u \\
	&\quad + \int_0^t [\mathbf{a}(\tilde{\bX}(u)) - \mathbf{a}(\mathbf{X}(u))]\d u \\
	&=:\bM^{a, 1}(t) + \bM^{a, 2}(t) + \bM^{a, 3}(t)
	\end{align*}
	
	Since
	\begin{align*}
	&\quad \|\bX(t) - \tilde{\bX}(t) \|_1\\
	&\leq \dfrac{1}{n}\sum_{l=1}^n |Q_l(t) - \tilde{Q}_l(t)|\\
	&= \dfrac{1}{n}\sum_{l=1}^n (Q_l(t) - \tilde{Q}_l(t))\leq \dfrac{2\T}{n}\qquad\forall t\geq 0
	\end{align*}
	and $\mathbf{a}$ is $2\lambda d$-Lipschitz, we can bound $\|\bM^{a, 3}(t)\|$ by
	\begin{equation}
	\|\bM^{a, 3}(t) \|_1 \leq \int_0^t 2\lambda d \dfrac{2\T}{n}d u = \dfrac{2\lambda d t\T}{n}
	\end{equation}
	
	Hence,
	\begin{equation}
	\sup_{t\in [0, T]}\|\bM^{a, 3}(t)\|_1\leq \dfrac{4\lambda d T\T}{n}
	\end{equation}
	
	Now we try to bound $\|\bM^{a, 2}(t)\|_1$ and $\|\bM^{a, 1}(t)\|_1$. Let $r>1$ be a constant that we choose later. We now introduce two high probability events
	\begin{equation}
	\begin{split}
	\mathcal{A}&:=\{\text{there are strictly less than $e\lambda T n$ job arrivals before $T$} \}\\
	\mathcal{B}&:=\mathcal{A}\cap \{\text{no queue has accepted more than $\kappa_r L\log n$} \\
	&\qquad\text{arrival jobs within the first $\lfloor e\lambda T n\rfloor$ arrivals} \}
	\end{split}
	\end{equation}
	where $L = d\left(\frac{c\log(k-1)}{2\alpha}+1\right)$ and $\kappa_r = \max\{\frac{32r}{3}, 4e\lambda T+1\}$.
	
	\begin{lemma}
		We have
		\begin{equation}
		\Pr(\mathcal{A}^c) \leq e^{-\lambda n T},\quad \Pr(\mathcal{B}^c) \leq e^{-\lambda n T} + 2dn^{-r+1}
		\end{equation}
	\end{lemma}
	
	\begin{proof}
		By Chernoff bound we know that
		\begin{align*}
		\Pr(\mathcal{A}^c) &= \Pr(\mathrm{Poisson}(\lambda n T)\geq e\lambda n T)\\
		&\leq e^{-se\lambda n T}\exp(\lambda n T(e^s-1))\qquad \forall s>0,
		\end{align*}
		and then picking $s=1$ results in
		\begin{align*}
		\Pr(\mathcal{A}^c)\leq e^{-\lambda n T}
		\end{align*}
		
		Now we estimate the probability of $\mathcal{B}^c$: If $\mathcal{B}$ is not true, then either $\mathcal{A}$ is not true, or some queue is allocated with more than $\kappa_r d\log n$ arrival jobs while $\mathcal{A}$ is true. Let $V_{i, l}\langle j\rangle$ denote the number of visits to queue $i$ by the $l$-th random walker between (discrete) time $[(j-1)\T, j\T)$. Let $\tilde{L}=\frac{c\log(k-1)}{2\alpha}+1$. Because of the girth assumption, we have $V_{i, l}\langle j\rangle\leq \tilde{L}$ for all $i, l, j$.
		
		Let $N$ be the smallest even number that is at least $\frac{e\lambda T n}{\T}$. We have
		\begin{align*}
		\Pr(\mathcal{B}^c)&\leq \Pr(\mathcal{A}^c) + \Pr\left(\exists i\in [n], \sum_{l=1}^d\sum_{j=1}^{N} V_{i, l}\langle j\rangle \geq \kappa_r L\log n \right)\\
		&\leq \Pr(\mathcal{A}^c) + n d  \Pr\left(\sum_{j=1}^{N} V_{1, 1}\langle j\rangle \geq \kappa_r \tilde{L} \log n \right)\\
		&\leq \Pr(\mathcal{A}^c) + n d  \Pr\left(\sum_{j=1}^{N} \tilde{L}\mathbbm{1}_{V_{1, 1}\langle j\rangle>0} \geq \kappa_r \tilde{L} \log n \right)\\
		&=\Pr(\mathcal{A}^c) + n d  \Pr\left(\sum_{j=1}^{N} \mathbbm{1}_{V_{1, 1}\langle j\rangle>0} \geq \kappa_r \log n \right)
		\end{align*}
		
		Let $\mathcal{G}_j:=\mathcal{F}_{T_{j\T}}$ for $j\geq 0$ and $\mathcal{G}_{-1}$ be the trivial $\sigma$-algebra. $\{\mathcal{G}_j\}_{j=-1}^\infty$ forms a filtration, and $V_{1,1}[j]$ is adapted to $\{\mathcal{G}_j\}_{j=1}^\infty$.
		
		By Lemma \ref{lem: mixing} and the Union bound we have
		\begin{equation}
		\E[\mathbbm{1}_{V_{1, 1}\langle j\rangle>0}|\mathcal{G}_{j-2}]\leq \T\left(\dfrac{1}{n} + \dfrac{1}{n^2}\right)\leq \dfrac{2\T}{n}=:\tilde{m}
		\end{equation}
		
		Setting $\kappa_r = \max\{\frac{32r}{3}, 4e\lambda T+1\}$, we have 
		\begin{align}
		\kappa_r \log n&\geq 4e\lambda T\log n + \log n \\
		&\geq 4e\lambda T + \dfrac{8\T}{n}\\
		&\text{(for sufficiently large $n$ that does not depend on $T$)}\\
		&= 2\left(\dfrac{e\lambda T n}{\T} + 2\right) \cdot \dfrac{2\T}{n}\\
		&\geq 2 N \tilde{m}
		\end{align}
		
		Hence we can apply Lemma \ref{lem: bern} and obtain
		\begin{align*}
		&\quad \Pr\left(\sum_{j=0}^{\lfloor N\rfloor} \mathbbm{1}_{V_{i, 1}\langle j\rangle>0} \geq \kappa_r \log n \right) \\
		&\leq 2\exp\left(-\dfrac{3\kappa_r \log n}{32} \right)\leq 2n^{-r}
		\end{align*}
		
		Hence 
		\begin{align*}
		\quad\Pr(\mathcal{B}^c)&\leq \Pr(\mathcal{A}^c) + n d  \Pr\left(\sum_{j=0}^{\lfloor N\rfloor} \mathbbm{1}_{V_{1, 1}\langle j\rangle>0} \geq \kappa_r \log n \right) \\
		&\leq e^{-\lambda n T} + 2dn^{-r+1}
		\end{align*}
	\end{proof}
	
	\begin{remark}
		It is possible (using the techniques in \cite{alon2007non}) to show that the arrival jobs accepted by a single queue within the first $\lfloor e\lambda T n\rfloor$ arrivals is bounded by $O(\frac{\log n}{\log\log n})$ with a sufficiently large probability. However, an $O(\log n)$ bound is enough for our purpose.
	\end{remark}
	
	\begin{lemma}
		\begin{equation}
		\begin{split}
		&\quad \Pr\left(\sup_{t\in [0, T]}\|\bM^{a, 2}(t)\|_1 \geq 2\lambda d T \varepsilon_2, \mathcal{A} \right)\\
		&\leq e(1+\lambda) \lambda T n(1+\lambda)^{-n\varepsilon_2}
		\end{split}
		\end{equation}
	\end{lemma}
	
	\begin{proof}
		Same as the proof of Lemma 3 in \cite{sigm}.
	\end{proof}
	
	The rest of the proof is to bound $\|\bM^{a, 1}(t)\|_1$. To achieve this goal, we first sample the continuous-time process $\bM^{a, 1}(t)$ at times $T_{j\T}$, the time of the $j\T$-th job arrival. Denote
	\begin{equation}
	\bM^{a, 1}\langle j\rangle :=\bM^{a, 1}(T_{j\T -})
	\end{equation}
	
	The key lemma for the proof is stated as follows:
	\begin{lemma}\label{lem: key}
		For all $i\geq$ and $j\geq 0$,
		\begin{align*}
		|\E[M_i^{a, 1}\langle j+1\rangle  - M_i^{a, 1}\langle j\rangle | \mathcal{G}_{j-1}]|\leq \dfrac{2d\T^2}{n^{1+\alpha}}
		\end{align*}
	\end{lemma}
	
	\begin{proof}
		For $0\leq s< \T$, define $I_i\{j, s\}$ to be the indicator of the event that $(j\T+s)$-th arrival is allocated to a queue of load $i-1$.
		
		For ease of notation, define $W_l\{j, s\} :=W_l\{j\T+s\} , T_{j, s} := T_{j\T+s}, \tau_{j, s}:=\tau_{j\T+s}$.
		
		Recall that $\tau_j$ is the inter-arrival time between job $j-1$ and job $j$ (where $\tau_1$ is defined to be the arrival time of the first job.) Just as in \cite{sigm}, we have
		\begin{equation}\label{mdecomp}
		\begin{split}
		&\quad M_i^{a, 1}\langle j+1\rangle - M_i^{a, 1}\langle j\rangle \\
		&=\sum_{s=0}^{\T-1}\left(\dfrac{1}{n}I_i\{j, s\} - a_i(\tilde{\bX}(T_{j,s}))\tau_{j,s+1}\right)
		\end{split}
		\end{equation}
		
		Define $\mathcal{D}_{j, s}$ to be the event that (at least) one of $W_1\{j,s\}, W_2\{j,s\},\cdots, W_d\{j,s\}$ that has been visited by (at least) one of the random walkers at some timestamps $(j-1)_+\T\leq t<j\T+s$.

		Conditioned on $\mathcal{D}_{j, s}^c$, by Observation \ref{obs: 1} we have $Q_{W_l\{j, s\}}(T_{j, s}-) = \tilde{Q}_{W_l\{j, s\} }(T_{j,s}) $.
		
		If $j$ is even, then $ \tilde{Q}_{W_l\{j, s\} }(T_{j,s}) =  \tilde{Q}^-_{W_l\{j, s\} }(T_{j,s})$, then we have
		\begin{align*}
		&\quad\E[I_i\{j, s\}~|~\mathcal{G}_{j-1}]\\
		&=\Pr\left(\min_{l=1,\cdots,d}Q_{W_l\{j, s\}}(T_{j,s}-)=i-1 ~\Big|~ \mathcal{G}_{j-1}\right)\\
		&\leq \Pr\left(\min_{l=1,\cdots,d}Q_{W_l\{j, s\}}(T_{j,s}-)=i-1, \mathcal{D}_{j, s}^c ~\Big|~ \mathcal{G}_{j-1} \right) \\&\quad + \Pr(\mathcal{D}_{j, s}~|~\mathcal{G}_{j-1})\\
		&= \Pr\left(\min_{l=1,\cdots,d}\tilde{Q}^-_{W_l\{j, s\}}(T_{j,s})=i-1, \mathcal{D}_{j, s}^c ~\Big|~ \mathcal{G}_{j-1} \right) \\
		&\quad + \Pr(\mathcal{D}_{j, s}~|~\mathcal{G}_{j-1})\\
		&\leq  \Pr\left(\min_{l=1,\cdots,d}\tilde{Q}^-_{W_l\{j, s\}}(T_{j,s})=i-1 ~\Big|~ \mathcal{G}_{j-1} \right) \\
		&\quad + \Pr(\mathcal{D}_{j, s}~|~\mathcal{G}_{j-1})
		\end{align*}
		
		Similarly we have a lower bound
		\begin{align*}
		&\quad \E[I_i\{j, s\}~|~\mathcal{G}_{j-1}]\\
		&=\Pr\left(\min_{l=1,\cdots,d}Q_{W_l\{j, s\}}(T_{j,s}-)=i-1 ~\Big|~ \mathcal{G}_{j-1}\right)\\
		&\geq \Pr\left(\min_{l=1,\cdots,d}Q_{W_l\{j, s\}}(T_{j,s}-)=i-1, \mathcal{D}_{j, s}^c ~\Big|~ \mathcal{G}_{j-1} \right) \\
		&\geq  \Pr\left(\min_{l=1,\cdots,d}\tilde{Q}^-_{W_l\{j, s\}}(T_{j,s})=i-1~\Big|~ \mathcal{G}_{j-1} \right) \\
		&\quad - \Pr(\mathcal{D}_{j, s}|\mathcal{G}_{j-1})
		\end{align*}
		
		Now, we take advantage of the following important observation:
		\begin{observation}
			Let $j\geq 0$ be even. Given $\mathcal{G}_{j-1}$, $\tilde{\bQ}^{-}(T_{j, s})_{0\leq s\leq \T}$ is conditionally independent of $(\mathbf{W}\{j, s\})_{s\geq 0}$.
		\end{observation}
		
		The observation is true for $j=0$ since $(\tilde{\bQ}^-(T_{0,s}))_{0\leq s<\T}$ is a result of applying potential service schedules to $\tilde{\bQ}(T_{0,0})$, and the service schedules are independent of the random walker positions since the random walkers are initialized independently of the initial queue lengths. Recall that $\mathcal{G}_{-1}$ is defined to be the trivial $\sigma$-algebra. Hence the observation is stating that $(\tilde{\bQ}^-(T_{0,s}))_{0\leq s<\T}$ is independent of $(\mathbf{W}\{0, s\})_{s\geq 0}$
		
		For $j=2$, the observation is true since $(\tilde{\bQ}^-(T_{j,s}))_{0\leq s<\T}$ is a result of applying potential service schedules to $\tilde{\bQ}(T_{j-1,0})$, where $\tilde{\bQ}(T_{j-1,0})$ is $\mathcal{G}_{j-1}$-measurable and the service schedules are conditionally independent of the random walker positions given $\mathcal{G}_{j-1}$.
		
		Using the observation, we conclude that $\tilde{\bQ}(T_{j, s})$ is conditionally independent of $\mathbf{W}\{j, s\}$ conditioning on $\mathcal{G}_{j-1}$. 
		
		By Lemma \ref{lem: mixing}, we have
		\begin{align*}
		\left|\Pr\left(W_l\{j, s\} = v~|~\mathcal{G}_{j-1}\right)- \dfrac{1}{n}\right| &\leq \dfrac{1}{n^2}\qquad \\&\forall v=1,\cdots, n,\quad \forall s\geq 0
		\end{align*}
		for $j\geq 1$. The above is also true for $j=0$ since $\mathcal{G}_{j-1}$ is the trivial $\sigma$-algebra and $\Pr(W_l\{j, s\} = v) = \frac{1}{n}$.
		
		Hence
		\begin{footnotesize}
		\begin{align*}
		&\quad \Pr\left(\min_{l=1,\cdots,d}\tilde{Q}^-_{W_l\{j, s\}}(T_{j,s})= i-1 ~\Big|~\tilde{\bQ}(T_{j, s}), \mathcal{G}_{j-1} \right)\\
		&=\sum_{v_1, \cdots, v_d\in [n]} \Pr\left(W_l\{j, s\} = v_l, \forall l=1,\cdots, d~|~\tilde{\bQ}(T_{j, s}),\mathcal{G}_{j-1}\right)\\ &\quad \times\mathbbm{1}_{\{\min_{l=1,\cdots,d} \tilde{Q}^-_{v_l}(T_{j, s})= i-1 \}}\\
		&=\sum_{v_1, \cdots, v_d\in [n]} \Pr\left(W_l\{j, s\} = v_l, \forall l=1,\cdots, d~|~\tilde{\bQ}(T_{j, s}),\mathcal{G}_{j-1}\right)\\
		&\quad\times \left(\prod_{l=1}^{d}\mathbbm{1}_{\{\tilde{Q}^-_{v_l}(T_{j, s})\geq i-1 \}} - \prod_{l=1}^{d}\mathbbm{1}_{\{\tilde{Q}^-_{v_l}(T_{j, s})\geq i \}} \right) \\
		&=\sum_{v_1, \cdots, v_d\in [n]} \prod_{l=1}^d \Pr\left(W_l\{j, s\} = v_l~|~\mathcal{G}_{j-1}\right)\\
		&\quad\times \left(\prod_{l=1}^{d}\mathbbm{1}_{\{\tilde{Q}^-_{v_l}(T_{j, s})\geq i-1 \}} - \prod_{l=1}^{d}\mathbbm{1}_{\{\tilde{Q}^-_{v_l}(T_{j, s})\geq i \}} \right)\\
		&\leq \sum_{\substack{v_l\in [n]\\1\leq l\leq d }} \prod_{l=1}^d \left(\dfrac{1}{n}+ \dfrac{1}{n^2}\right) \left(\prod_{l=1}^{d}\mathbbm{1}_{\{\tilde{Q}^-_{v_l}(T_{j, s})\geq i-1 \}} - \prod_{l=1}^{d}\mathbbm{1}_{\{\tilde{Q}^-_{v_l}(T_{j, s})\geq i \}} \right)\\
		&=\left(\dfrac{1}{n} + \dfrac{1}{n^2}\right)^d \sum_{\substack{v_l\in [n]\\1\leq l\leq d }} \left(\prod_{l=1}^{d}\mathbbm{1}_{\{\tilde{Q}^-_{v_l}(T_{j, s})\geq i-1 \}} - \prod_{l=1}^{d}\mathbbm{1}_{\{\tilde{Q}^-_{v_l}(T_{j, s})\geq i-1 \}} \right) \\
		&=\left(\dfrac{1}{n} + \dfrac{1}{n^2}\right)^d \left(\prod_{l=1}^d \sum_{v=1}^n \mathbbm{1}_{\{\tilde{Q}^-_{v}(T_{j, s})\geq i-1 \}} - \prod_{l=1}^d \sum_{v=1}^n \mathbbm{1}_{\{\tilde{Q}^-_{v}(T_{j, s})\geq i \}}\right) \\
		&=\left(\dfrac{1}{n} + \dfrac{1}{n^2}\right)^d \left(\prod_{l=1}^d (n\tilde{X}_{i-1}^- (T_{j, s})) - \prod_{l=1}^d (n\tilde{X}_{i}^- (T_{j, s}))\right) \\
		&= \left(1+ \dfrac{1}{n}\right)^d \left[(\tilde{X}_{i-1}^- (T_{j, s}))^d - (\tilde{X}_{i}^- (T_{j, s}))^d\right]
		\end{align*}
		\end{footnotesize}
		
		Similarly,
		\begin{align*}
		&\quad\Pr\left(\min_{l=1,\cdots,d}\tilde{Q}^-_{W_l\{j, s\}}(T_{j,s})= i-1 \Big|\tilde{\bQ}(T_{j, s}), \mathcal{G}_{j-1} \right)\\
		&\geq \left(1- \dfrac{1}{n}\right)^d \left[(\tilde{X}_{i-1}^- (T_{j, s}))^d - (\tilde{X}_{i}^- (T_{j, s}))^d \right]
		\end{align*}
		
		For sufficiently large $n$, we have $(1+\frac{1}{n})^d - 1 \leq \frac{3d}{2n}$ and $(1-\frac{1}{n})^d - 1 \geq -\frac{3d}{2n}$, in which case we have
		\begin{align*}
		&\quad (\tilde{X}_{i-1}^- (T_{j, s}))^d -  (\tilde{X}_{i}^- (T_{j, s}))^d -\dfrac{3d}{2n}\\
		&\leq\Pr\left(\min_{l=1,\cdots,d}\tilde{Q}^-_{W_l\{j, s\}}(T_{j,s})= i-1 \Big|\tilde{\bQ}(T_{j, s}), \mathcal{G}_{j-1} \right) \\
		&\leq (\tilde{X}_{i-1}^- (T_{j, s}))^d -  (\tilde{X}_{i}^- (T_{j, s}))^d + \dfrac{3d}{2n}
		\end{align*}
		
		We then conclude that
		\begin{align}
		&\quad\E[I_i\{j, s\}|\mathcal{G}_{j-1}]\\
		&\leq \Pr\left(\min_{1\leq l\leq d}\tilde{Q}^-_{W_l\{j, s\}}(T_{j,s})=i-1 \Big| \mathcal{G}_{j-1} \right) + \Pr(\mathcal{D}_{j, s}|\mathcal{G}_{j-1})\\
		&= \E\left[ \Pr\left(\min_{1\leq l\leq d}\tilde{Q}^-_{W_l\{j, s\}}(T_{j,s})=i-1 \Big|\tilde{\bQ}(T_{j, s}), \mathcal{G}_{j-1} \right)\Big| \mathcal{G}_{j-1}\right] \\&\quad +  \Pr(\mathcal{D}_{j, s}|\mathcal{G}_{j-1})\\
		&\leq \E\left[ (\tilde{X}_{i-1}^- (T_{j, s}))^d -  (\tilde{X}_{i}^- (T_{j, s}))^d~ \Big| \mathcal{G}_{j-1} \right] + \dfrac{3d}{2n} + \Pr(\mathcal{D}_{j, s}|\mathcal{G}_{j-1})\\
		&= \dfrac{1}{\lambda}\E\left[ a_i(\tilde{\bX} (T_{j, s})) \Big| \mathcal{G}_{j-1} \right] + \dfrac{3d}{2n} + \Pr(\mathcal{D}_{j, s}|\mathcal{G}_{j-1})\label{ijsup}
		\end{align}
		and similarly
		\begin{align}
		&\quad \E[I_i\{j, s\}|\mathcal{G}_{j-1}]\\
		&\geq \dfrac{1}{\lambda}\E\left[a_i (\tilde{\bX} (T_{j, s})) | \mathcal{G}_{j-1} \right] - \dfrac{3d}{2n} - \Pr(\mathcal{D}_{j, s}|\mathcal{G}_{j-1})\label{ijslo}
		\end{align}
		
		By construction $(\tilde{\bX}(T_{j,s}))$ is independent of $\tau_{j, s+1}$ conditioned on $\mathcal{G}_{j-1}$. Recall that $\tau_{j, s+1}$ is an inter-arrival time of jobs with $\E[\tau_{j, s+1}] = \dfrac{1}{\lambda n}$. We have
		\begin{equation}\label{aieq}
		\E\left[a_i(\tilde{\bX}(T_{j, s}))\tau_{j, s+1}|\mathcal{G}_{j-1}\right] = \dfrac{1}{\lambda n}\E\left[a_i(\tilde{\bX}(T_{j, s}))|\mathcal{G}_{j-1}\right]
		\end{equation}
		
		Combining \eqref{mdecomp}, \eqref{ijsup}, \eqref{ijslo}, and \eqref{aieq} we have
		\begin{align}
		&\quad |\E[M_i^{a_1}\langle j+1 \rangle - M_i\langle j\rangle | \mathcal{G}_{j-1} ]|\\
		&\leq \sum_{s=0}^{\T-1} \left|\dfrac{1}{n}\E\left[I_i\{j, s\}|\mathcal{G}_{j-1}\right] - \E\left[a_i(\tilde{\bX}(T_{j, s}))\tau_{j, s+1}|\mathcal{G}_{j-1}\right] \right|\\
		&\leq \sum_{s=0}^{\T-1} \left(\dfrac{3d}{2n^2} + \dfrac{1}{n}\Pr(\mathcal{D}_{j, s}|\mathcal{G}_{j-1}) \right) \label{compensator}
		\end{align}
		for $j\geq 0$ even.
		
		With a parallel argument (replacing $\tilde{\bX}^{-}$ by $\tilde{\bX}^+$), \eqref{compensator} is also true for $j\geq 0$ odd. 
		
		\begin{claim}\label{claim: djs}
			For all $j\geq 0$,
			\begin{align*}
			&\quad \Pr(\mathcal{D}_{j, s}|\mathcal{G}_{j-1}) 
			\\&\leq \dfrac{d(\T+s)}{n^\alpha} + \dfrac{d(d-1)(\T+s)(1+n^{-1})}{n}\quad a.s.
			\end{align*}
		\end{claim}
		
		Given Claim \ref{claim: djs}, we have 
		\begin{align*}
		&\quad |\E[M_i^{a_1}\langle j+1 \rangle - M_i\langle j\rangle | \mathcal{G}_{j-1} ]| \\
		&\leq \sum_{s=0}^{\T-1} \left(\dfrac{3d}{2n^2} + \dfrac{d(\T+s)}{n^{1+\alpha}} \right.\\
		&\quad \left.+ \dfrac{d(d-1)(\T+s)(1+n^{-1})}{n^2} \right)\\
		&\leq \dfrac{d\T}{n}\left(\dfrac{3}{2n}+\dfrac{3\T}{2n^\alpha}+\dfrac{3(d-1)\T}{2n} \right.\\
		&\quad \left.+ \dfrac{3(d-1)\T}{2n^2} \right)\\
		&\leq \dfrac{d\T}{n} \cdot \dfrac{2\T}{n^\alpha}\\
		&\quad \text{(For sufficiently large $n$. Notice that $\alpha<1$.)}
		\end{align*}
		
		\begin{proof}[Proof of Claim]
			For $j=0$, by union bound we have 
			\begin{align*}
			\Pr(\mathcal{D}_{0, s}) &\leq \sum_{l=1}^d \sum_{r=0}^s \Pr(W_l\{0, r \} = W_l\{0, s\} ) \\
			&\quad + \sum_{l_1\neq l_2} \sum_{r=0}^s \Pr(W_{l_1}\{0, r \} = W_{l_2}\{0, s\} )\\
			&= d \sum_{r=0}^s \Pr(W_1\{0, r \} = W_1\{0, s\} ) \\
			&\quad + d(d-1) \sum_{r=0}^s \Pr(W_{1}\{0, r \} = W_{2}\{0, s\} )\\
			&\leq \dfrac{ds}{n^\alpha} + \dfrac{d(d-1)s}{n}
			\end{align*}
			
			For $j\geq 1$, by union bound we have
			\begin{align*}
			&\quad \Pr(\mathcal{D}_{j, s}|\mathcal{G}_{j-1})\\
			&\leq \sum_{l=1}^d \sum_{r=-\T}^s \Pr(W_l\{j, r \} = W_l\{j, s\}|\mathcal{G}_{j-1} ) \\
			&\quad + \sum_{l_1\neq l_2} \sum_{r=-\T}^s \Pr(W_{l_1}\{j, r \} = W_{l_2}\{j, s\} |\mathcal{G}_{j-1})\\
			&= d \sum_{r=-\T}^s \Pr(W_1\{j, r \} = W_1\{j, s\} |\mathcal{G}_{j-1}) \\
			&\quad + d(d-1) \sum_{r=-\T}^s \Pr(W_{1}\{j, r \} = W_{2}\{j, s\} |\mathcal{G}_{j-1})\\
			&\leq \dfrac{d(\T+s)}{n^\alpha} + \dfrac{d(d-1)(\T+s)(1+n^{-1})}{n}
			\end{align*}
		\end{proof}
		
	\end{proof}
	
	Lemma \ref{lem: key} states that $\bM^{a,1}\langle j\rangle$ is very close to satisfying the condition of Lemma \ref{lem: azhoef}, except for a small compensator. The next Corollary applies concentration inequalities to bound $|M_i^{a, 1}(t)|$ for each $i$.
	
	\begin{cor}
		Let
		\begin{align*}
		N&:=\min\left\{m\in 2\mathbb{Z}: m\geq \dfrac{e\lambda T n}{\T}  \right\} = \Theta\left(\dfrac{n}{\log n} \right)\\
		\delta &:= \dfrac{2dN\T^2}{n^{1+\alpha}}=O\left(\frac{\log n}{n^\alpha} \right)\\
		K_r&:=\dfrac{\T}{n} + \dfrac{r\log n(\T+1)}{\lambda n}=O\left(\dfrac{(\log n)^2}{n}\right)
		\end{align*}
		
		Then for sufficiently large $n$, 
		\begin{align}
		&\quad\Pr\left(\sup_{t\in [0, T]}|M_i^{a, 1}(t)|\geq \varepsilon_1 + K_r+\delta,\mathcal{A} \right)\\
		&\leq 4\exp\left(-\dfrac{\varepsilon_1^2}{8NK_r^2}\right)+eNn^{-r}
		\end{align}
		holds for any $\varepsilon_1>0$ for all $i\geq 1$.
	\end{cor}
	
	\begin{proof}
		We first provide a lemma on the tail bound for sum of exponential random variables.
		
		\begin{lemma}\label{lem: sumofexp}
			Let $Z$ be the sum of $\T$ independent exponential random variables with mean $\frac{1}{\lambda n}$, then 
			\begin{equation}
			\Pr\left(Z\geq \dfrac{r\log n(\T+1)}{\lambda n}\right)\leq en^{-r}
			\end{equation}
		\end{lemma}
		
		\begin{proof}
			Lemma 12 in \cite{sigm}.
		\end{proof}
		
		Define
		\begin{align*}
		&\quad Z_i^{a, 1}\langle j\rangle := M_i^{a, 1}\langle j+1\rangle - M_i^{a, 1}\langle j\rangle \\
		&=\dfrac{1}{n}\sum_{s=0}^{\T-1} I_i\{j, s\} - \sum_{s=0}^{\T-1} a_i(\tilde{\bX}(T_{j, s})) \tau_{j, s+1}
		\end{align*}
		
		Now, decompose $M_i^{a, 1}\langle j \rangle = \tilde{M}_i^{a, 1}\langle j\rangle + \Delta_i^{a, 1}\langle i\rangle $ where
		\begin{align*}
		&\quad\tilde{M}_i^{a, 1}\langle j\rangle := M_i^{a, 1}\langle 0\rangle + \sum_{l=0}^{j-1} \left(Z_i^{a, 1}\langle l\rangle - \E[Z_i^{a, 1}\langle l\rangle |\mathcal{G}_{l-1}]\right) 
		\end{align*}
		and
		\begin{align*}
		\Delta_i^{a, 1}\langle j\rangle = \sum_{l=0}^{j-1}   \left( \E[Z_i^{a, 1}\langle l\rangle | \mathcal{G}_{l-1}] \right)
		\end{align*}
		
		We immediately have $\E[\tilde{M}_i^{a, 1}\langle j \rangle|\mathcal{G}_{j-2}] = \tilde{M}_i^{a, 1}\langle j-2 \rangle$ and
		\begin{equation}
		|\Delta_i^{a, 1}\langle j\rangle| \leq j \cdot \dfrac{2d\T^2}{n^{1+\alpha}}
		\end{equation}
		
		Thus,
		\begin{align}
		\sup_{0\leq j\leq N}|M_i^{a, 1}\langle j\rangle| &\leq \sup_{0\leq j\leq N}|\tilde{M}_i^{a, 1}\langle j\rangle| + N\cdot \dfrac{2d\T^2}{n^{1+\alpha}}\\
		&= \sup_{0\leq j\leq N}|\tilde{M}_i^{a, 1}\langle j\rangle| + \delta\qquad \forall j\quad a.s.\label{martingaledecomposition}
		\end{align}
		
		Now we provide a bound for the differences of $\tilde{M}_i^{a, 1}[j]$. Notice that
		\begin{equation}
		\tilde{M}_i^{a, 1}\langle j+1\rangle - \tilde{M}_i^{a, 1}\langle j\rangle = Z_i^{a, 1}\langle j\rangle - \E[Z_i^{a, 1}\langle j\rangle | \mathcal{G}_{j-1}]
		\end{equation}
		
		We know that
		\begin{equation}
		- \sum_{s=0}^{\T-1} \tau_{j, s+1} \leq Z_i^{a, 1}\langle j\rangle \leq \dfrac{\T}{n}\qquad a.s.
		\end{equation}
		
		Hence,
		\begin{align}
		\E[Z_i^{a, 1}\langle j\rangle| \mathcal{G}_{j-1}]&\geq \E\left[- \sum_{s=0}^{\T-1} \tau_{j, s+1}\Big|\mathcal{G}_{j-1} \right] \\&= -\dfrac{\T}{\lambda n}\qquad a.s.
		\end{align}
		and
		\begin{equation}
		\E[Z_i^{a, 1}\langle j\rangle| \mathcal{G}_{j-1}]\leq  \dfrac{\T}{n}\qquad a.s.
		\end{equation}
		
		Hence
		\begin{align*}
		&\quad |\tilde{M}_i^{a, 1}\langle j+1\rangle -\tilde{M}_i^{a, 1}\langle j\rangle |\\
		&=|Z_i^{a, 1}\langle j\rangle - \E[Z_i^{a, 1}\langle j\rangle| \mathcal{G}_{j-1}]|\\
		&\leq \dfrac{\T}{n} + \max\left\{\sum_{s=0}^{\T-1} \tau_{j, s+1} ,~ \dfrac{\T}{\lambda n}  \right\}
		\end{align*}
		
		Define $\mathcal{C}$ to be the event that
		\begin{equation}
		\forall 0\leq j< N, \qquad \sum_{s=0}^{\T-1} \tau_{j, s+1} \leq \dfrac{r\log n(\T+1)}{\lambda n}
		\end{equation}
		
		Under $\mathcal{C}$, we have $|\tilde{M}_i^{a, 1}\langle j+1\rangle - \tilde{M}_i^{a, 1}\langle j\rangle|\leq \frac{\T}{n} + \frac{r\log n(\T+1)}{\lambda n} = K_r$. 
		
		\begin{lemma}[Modified Azuma-Hoeffding]\label{lem: azhoef}
			\begin{equation}
			\Pr\left(\max_{0\leq j\leq N}\left|\tilde{M}_i^{a, 1}\langle j\rangle \right|\geq \varepsilon, \mathcal{C} \right) \leq 4\exp\left(-\dfrac{\varepsilon^2}{4NK_r^2}\right)
			\end{equation}
		\end{lemma}
		
		\begin{proof}
			Fix $i$. For the purpose of exposition, set $\tilde{Z}_j:= \tilde{M}_i^{a, 1}\langle j \rangle - \tilde{M}_i^{a, 1}\langle j-1 \rangle$ and 
			
			By the Union Bound, we have
			\begin{align}
			&\quad \Pr\left(\max_{0\leq j\leq N}\left|\sum_{l=1}^j \tilde{Z}_j\right|\geq \varepsilon, \mathcal{C} \right) \\&\leq \Pr\left(\max_{0\leq j\leq N}\left(\left|\sum_{l=1}^{\lceil j/2\rceil} \tilde{Z}_{2l}\right| + \left|\sum_{l=1}^{\lceil j/2\rceil} \tilde{Z}_{2l-1}\right|\right) \geq \varepsilon, \mathcal{C} \right) \\	
			&\leq \Pr\left(\max_{0\leq j\leq \frac{N}{2}}\left|\sum_{l=1}^{j} \tilde{Z}_{2l}\right| + \max_{0\leq j\leq \frac{N}{2}}\left|\sum_{l=1}^{j} \tilde{Z}_{2l-1}\right| \geq \varepsilon, \mathcal{C} \right) \\	
			&\leq \Pr\left(\max_{0\leq j\leq \frac{N}{2}}\left|\sum_{l=1}^j \tilde{Z}_{2l}\right|\geq \dfrac{\varepsilon}{2}, \mathcal{C} \right) \\
			&\quad + \Pr\left(\max_{0\leq j\leq \frac{N}{2}}\left|\sum_{l=1}^j \tilde{Z}_{2l-1}\right|\geq \dfrac{\varepsilon}{2}, \mathcal{C} \right).\label{cineq0}
			\end{align}
			
			We know that $\{\sum_{l=1}^{2j} \tilde{Z}_{2l}\}_{j=0}^N$ is a martingale w.r.t. $\{\mathcal{F}_{2j} \}$. 
			
			Using the decision tree construction used in Lemma 8.2 of \cite{chung2006concentration}, one can construct random variables $\{Y_j\}_{j=0}^N$ such that
			\begin{itemize}
				\item $\{Y_j\}_{j=0}^N$ is a martingale w.r.t. $\{\mathcal{F}_{2j} \}$
				\item $|Y_j-Y_{j-1}|\leq K_r$
				\item $Y_j=\sum_{l=1}^{2j} \tilde{Z}_{2l}$ under event $\mathcal{C}$
			\end{itemize}
		
			Note that Lemma 8.2 of  \cite{chung2006concentration} is stated for finite state space random variables, but it can be easily generalized to our setting, where the distribution of $\tilde{Z}_{2l}$ is driven by discrete events (i.e. arrivals and services.) 
			
			By Azuma-Hoeffding Inequality for Maxima (Eq 3.30 of \cite{habib2013probabilistic})
			\begin{align}
				\Pr\left(\max_{0\leq j\leq \frac{N}{2}} |Y_j-Y_0|\geq \varepsilon \right)\leq 2\exp\left(-\dfrac{\varepsilon^2}{2(\frac{N}{2}K_r^2)} \right)
			\end{align}
			
			we have
			\begin{align}
			&\quad \Pr\left(\max_{0\leq j\leq \frac{N}{2}}\left|\sum_{i=1}^j \tilde{Z}_{2j}\right|\geq \dfrac{\varepsilon}{2}, \mathcal{C} \right)\\
			&= \Pr\left(\max_{0\leq j\leq \frac{N}{2}}\left|Y_j-Y_0\right|\geq \dfrac{\varepsilon}{2}, \mathcal{C} \right) \\
			&\leq 2\exp\left(\dfrac{(\varepsilon/2)^2}{NK_r^2}\right) = 2\exp\left(\dfrac{\varepsilon^2}{4NK_r^2}\right).\label{cineq1}
			\end{align}
			
			For the same reason,
			\begin{equation}\label{cineq2}
			\Pr\left(\max_{0\leq j\leq \frac{N}{2}}\left|\sum_{i=1}^j \tilde{Z}_{2j-1}\right|\geq \dfrac{\varepsilon}{2}, \mathcal{C} \right)\leq  2\exp\left(\dfrac{\varepsilon^2}{4NK_r^2}\right)
			\end{equation}
			
			Combining \eqref{cineq0},\eqref{cineq1}, and \eqref{cineq2} we prove the result.
		\end{proof}

		Now we have
		\begin{equation}
		\Pr\left(\sup_{0\leq j\leq N} |\tilde{M}_i^{a, 1}\langle j\rangle | \geq \varepsilon_1 ,~ \mathcal{C} \right)\leq  4\exp\left(-\dfrac{\varepsilon_1^2}{4NK_r^2} \right)
		\end{equation}
		
		Combining \eqref{martingaledecomposition} we have
		\begin{equation}
		\Pr\left(\sup_{0\leq j\leq N} |M_i^{a, 1}\langle j\rangle| \geq \varepsilon_1 + \delta,~ \mathcal{C} \right)\leq  4\exp\left(-\dfrac{\varepsilon_1^2}{4NK_r^2} \right)
		\end{equation}
		
		When $\mathcal{C}$ is true, we have that, for any $t$ such that $T_{j, 0}\leq t<T_{j+1, 0}$ for some $0\leq j  < N$,
		\begin{equation}
		|M_i^{a, 1}(t) - M_i^{a, 1}\langle j\rangle |\leq \dfrac{\T}{n} + \sum_{s=0}^{\T-1} \tau_{j, s+1} \leq K_r\quad a.s.
		\end{equation}
		
		Hence
		\begin{align}
		&\quad\Pr\left(\sup_{t\in [0, T_{N\T}]} |M_i^{a, 1}(t)|\geq \varepsilon_1+K_r+\delta ,~\mathcal{C}\right) \\
		&\leq  4\exp\left(-\dfrac{\varepsilon_1^2}{4NK_r^2} \right)
		\end{align}
		
		Same as in \cite{sigm}, using Lemma \ref{lem: sumofexp} and the Union bound we can bound
		\begin{equation}
		\Pr(\mathcal{C}^c) \leq eNn^{-r}
		\end{equation}
		
		Finally, under event $\mathcal{A}$, $\overline{T}:=T_{N\T} \geq T$ (since $N\T\geq e\lambda T n$). Thus
		\begin{align*}
		&\quad\Pr\left(\sup_{t\in [0, T]}|M_i^{a, 1}(t)|\geq \varepsilon_1+K_r+\delta, ~\mathcal{A} \right)\\
		&\leq \Pr\left(\sup_{t\in [0, \overline{T}]}|M_i^{a, 1}(t)|\geq \varepsilon_1+K_r+\delta, ~\mathcal{A}\cap\mathcal{C} \right) + \Pr(\mathcal{C}^c)\\
		&\leq 4\exp\left(-\dfrac{\varepsilon_1^2}{4NK_r^2} \right) + eNn^{-r}
		\end{align*}
	\end{proof}
	
	\begin{lemma}
		Let $\rho > 0$ be any constant. Set 
		\begin{align}
		b&:=\lceil (\rho +\kappa_r L)\log n + L\rceil = \Theta(\log n)\\
		\varphi &:= 2N\T \left(\dfrac{\|\bx(0)\|_1+1}{\rho\log n}\right)^d = \Theta\left(\dfrac{n}{(\log n)^d}\right)
		\end{align}
		
		Then
		\begin{equation}
		\begin{split}
		&\quad \Pr\left(\sup_{t\in [0, T]} \sum_{i=b+1}^\infty |M_i^{a, 1}(t)|\geq \dfrac{\varphi}{n} + \lambda T \left(\dfrac{\|\bx(0)\|_1+1}{\rho \log n + L} \right)^d,~\mathcal{B} \right)\\
		&\leq 2\exp\left(-\dfrac{3\varphi}{32\T} \right)
		\end{split}
		\end{equation}
		
	\end{lemma}
	
	\begin{proof}
		By assumption, $\lim_{n\rightarrow\infty}\|\bX^{(n)}(0) - \bx(0)\|_1 = 0$, hence for sufficiently large $n$, we have 
		\begin{align}
		\dfrac{1}{n}\sum_{i=1}^n Q_i^{(n)}(0) = \|\bX^{(n)}(0)\|_1 \leq \|\bx(0)\|_1+1
		\end{align}
		
		Recall that the event $\mathcal{B}$ is an event in which the number of jobs each queue accepted in first $\lfloor e\lambda T n\rfloor$ arrivals is upper bounded by $\kappa_r L\log n$.
		Under event $\mathcal{B}$, the queues with length at least $(\rho+\kappa_r L\log n)$ at any time before the $\lfloor e\lambda T n\rfloor$-th arrival must have an initial length of at least $\rho\log n$. Hence 
		\begin{align}
		&\quad \sup_{0\leq j < e\lambda T n} X_{\lceil (\rho + \kappa_r L) \log n\rceil} \{j\} \leq X_{\lceil \rho\log n\rceil}\{0\}\\
		&\leq \dfrac{\frac{1}{n} \sum_{i=1}^n Q_i^{(n)}(0) }{\rho\log n} \leq \dfrac{\|\bx(0)\|_1+1}{\rho\log n}
		\end{align}
		
		Let $A_{> b}(t):= \sum_{i=b+1}^\infty A_i(t)$ to be the number of arrivals dispatched to queues with length at least $b$ before time $t$ (Recall that $A_i(t)$ is the number of arrivals dispatched to queues with length equal to $i-1$ before time $t$). Let $\tilde{A}_{>b}(t)$ be the number of arrival jobs that satisfy the following conditions:
		\begin{itemize}
			\item The job arrives before time $t$
			\item The assigned queue has length at least $b$ (just before arrival time)
			\item Either this job is the $m$-th job where $m < \T$, or if this job is the $m$-th job where ${j\T}\leq m<{(j+1)\T}$ for some $j\geq 1$ and
			\begin{equation}
			X_{\lceil (\rho+\kappa L)\log n\rceil }(T_{(j-1)\T} -)\leq \dfrac{\|\bx(0) \|_1+1}{\rho\log n}
			\end{equation}
		\end{itemize}
		
		Denote $\tilde{A}_{> b}\langle j\rangle :=\tilde{A}_{> b}(T_{j\T}-)$. Set $b:=\lceil (\rho+\kappa L)\log n + L\rceil$. We have
		\begin{align}
		&\quad\E[\tilde{A}_{> b}\langle j+1\rangle - \tilde{A}_{> b}\langle j\rangle~|~\mathcal{G}_{j-1}] \\
		&\leq \T \left(1+ \dfrac{1}{n}\right)^d \left(\dfrac{\|\bx(0) \|_1+1}{\rho\log n}\right)^d 
		\end{align}
		
		We further have 
		\begin{equation}
		\tilde{A}_{> b}\langle j+1\rangle - \tilde{A}_{> b}\langle j\rangle\leq \T\qquad a.s.
		\end{equation}
		
		Set $\varphi = 2N\T \left(1+ \dfrac{1}{n}\right)^d \left(\dfrac{\|\bx(0) \|_1+1}{\rho\log n}\right)^d$, using Bernstein's Inequality (Lemma \ref{lem: bern}) we obtain
		\begin{equation}
		\Pr\left(\tilde{A}_{> b} \langle N \rangle \geq \varphi \right) \leq 2\exp\left(-\dfrac{3\varphi}{32\T} \right)
		\end{equation}
		
		Under event $\mathcal{B}$, we have $A_{> b}(t) = \tilde{A}_{>b}(t)$ for $t\in [0, T]$, and $T_{N\T} \geq T$ (since $N\T\geq e\lambda T n$). Hence 
		\begin{align}
		&\quad \Pr\left(\sup_{t\in [0, T]} \sum_{i=b+1}^\infty A_i(t) \geq \varphi, ~\mathcal{B} \right)\\
		&=\Pr\left(\sup_{t\in [0, T]} A_{> b}(t) \geq \varphi , ~\mathcal{B}\right)\\
		&\leq \Pr\left(A_{> b}\langle N\rangle \geq \varphi , ~\mathcal{B}\right)\\
		&= \Pr\left(\tilde{A}_{> b}\langle N\rangle \geq \varphi , ~\mathcal{B}\right)\leq 2\exp\left(-\dfrac{3\varphi}{32\T} \right)
		\end{align}
		
		Under event $\mathcal{B}$, we also have
		\begin{equation}
		\overline{X}_{b+1}(t) \leq X_{\lceil \rho\log n + L\rceil }\{0\}\leq \dfrac{\|\bx(0)\|_1+1}{\rho \log n + L}\qquad\forall t\in [0, T]
		\end{equation}
		which implies that
		\begin{align}
		&\quad \sup_{t\in [0, T]} \sum_{i=b+1}^\infty \int_0^t a_i(\overline{\bX}(u))\d u\\
		&\leq \sup_{t\in [0, T]} \int_0^t \lambda (\overline{X}_{b+1}(u))^d \d u\\
		&\leq \lambda T \left(\dfrac{\|\bx(0)\|_1+1}{\rho\log n + L}\right)^d 
		\end{align}
		
		Then, applying union bound we obtain
		\begin{align}
		&\quad\Pr\left( \sup_{t\in [0, T]} \sum_{i=b+1}^\infty |M_i^{a, 1}(t)|\geq \dfrac{\varphi}{n} +\lambda T \left(\dfrac{\|\bx(0)\|_1+1}{\rho\log n + L}\right)^d  \right)\\
		&\leq 2\exp\left(-\dfrac{3\varphi}{32\T} \right)
		\end{align}
	\end{proof}
	
	\begin{cor}
		For any $\varepsilon_1>0$, let $\tilde{\varepsilon}_0=b(\varepsilon_1+K_r+\delta) +\frac{\varphi}{n} + \lambda T \left(\frac{\|\bx(0)\|_1+1}{\rho\log n + L} \right)^d$, we have
		\begin{align}
		&\quad\Pr\left(\sup_{t\in [0, T]} \|\bM^{a, 1}(t)\|_1 \geq \tilde{\varepsilon}_0, ~\mathcal{B} \right)\\
		&\leq b\left[4\exp\left(-\dfrac{\varepsilon_1^2}{4NK_r^2}\right) + eNn^{-3} \right] + 2\exp\left(-\dfrac{3\varphi}{32\T} \right)
		\end{align}
	\end{cor}
	
	\begin{proof}
		Application of the Union bound, similar to proof of Corollary 2 in \cite{sigm}.
	\end{proof}
	
	Now we have provided bounds for $\|\bM^{a, 1}(t)\|_1, \|\bM^{a, 2}(t)\|$ and $\|\bM^{a, 3}(t)\|_1$. Combine all the above, applying the Union bound, we obtain
	\begin{align}
	&\quad \Pr\left(\sup_{0\leq t\leq T} \|\bM^{a}(t)\|_1 \geq \tilde{\varepsilon_0} + 2\lambda d T \varepsilon_2 + \dfrac{4\lambda d T\T }{n},~\mathcal{B} \right)\\
	&\leq b\left[4\exp\left(-\dfrac{\varepsilon_1^2}{4NK_r^2}\right) + eNn^{-r} \right] + 2\exp\left(-\dfrac{3\varphi}{32\T} \right) \\
	&\quad + e(1+\lambda) \lambda T n (1+\lambda)^{-n\varepsilon_2}
	\end{align}
	
	Now we bound $\|\bM^b (t)\|_1$. This part of the proof is nearly identical to that of \cite{sigm}, hence
	\begin{lemma}
		\begin{align}
		&\quad \Pr\left(\sup_{0\leq t\leq T} \|\bM^b(t)\|_1\geq b\varepsilon_3 + \dfrac{(e+1)T(\|\bx(0)\|_1+1)}{\rho\log n+L},~\mathcal{B} \right)\\
		&\leq 2b\exp\left(-nTh\left(\dfrac{\varepsilon_3}{T}\right) \right) + \exp\left(-nT\dfrac{\|\bx(0)\|_1+1}{\rho\log n + L}\right)
		\end{align}
	\end{lemma}
	
	\begin{proof}
		Identical to that of Lemma 6 of \cite{sigm}
	\end{proof}
	
	Now, define
	\begin{align}
	&\quad \varepsilon_0 := b(\varepsilon_1+K_r+\delta) +\frac{\varphi}{n} + \lambda T \left(\frac{\|\bx(0)\|_1+1}{\rho\log n + L} \right)^d + 2\lambda d T \varepsilon_2 \\&+ \dfrac{4\lambda d T\T }{n} + b\varepsilon_3 + \dfrac{(e+1)T(\|\bx(0)\|_1+1)}{\rho\log n+L}
	\end{align}
	
	Combining all the above bounds, we have
	\begin{align}
	&\quad \Pr\left(\sup_{t\in [0, T]} \| \bM(t)\|_1 \geq \varepsilon_0 \right)\\
	&\leq \Pr\left(\sup_{0\leq t\leq T} \|\bM^{a}(t)\|_1 \geq \tilde{\varepsilon_0} + 2\lambda d T \varepsilon_2 + \dfrac{4\lambda d T\T }{n},~\mathcal{B} \right)\\
	&+ \Pr\left(\sup_{0\leq t\leq T} \|\bM^b(t)\|_1\geq b\varepsilon_3 + \dfrac{(e+1)T(\|\bx(0)\|_1+1)}{\rho\log n+L},~\mathcal{B} \right) \\& +\Pr(\mathcal{B})^c\\
	&\leq b\left[4\exp\left(-\dfrac{\varepsilon_1^2}{4NK_r^2}\right) + eNn^{-r} \right] + 2\exp\left(-\dfrac{3\varphi}{32\T} \right) \\
	&\quad + e(1+\lambda) \lambda T n (1+\lambda)^{-n\varepsilon_2} + 2b\exp\left(-nTh\left(\dfrac{\varepsilon_3}{T}\right) \right) \\
	&\quad + \exp\left(-nT\dfrac{\|\bx(0)\|_1+1}{\rho\log n + L}\right) + e^{-\lambda n T} + 2dn^{-r+1}\\
	&=:p_0
	\end{align}
	
	Recall that $N = \Theta(\frac{n}{\log n}), K = \Theta(\frac{(\log n)^2}{n}), \delta = \Theta(\frac{\log n}{n^{\alpha}}), b = \Theta(\log n), \varphi = \Theta(\frac{n}{(\log n)^d})$. 
	
	Select $\varepsilon_1 = \sqrt{4(r-1)NK_r}\log n$, $\varepsilon_2 = \frac{r}{\log(1+\lambda)} \frac{\log n}{n^\alpha}$, $\varepsilon_3 = (r-1)\sqrt{\frac{T\log n}{n}}$, using the fact that $h(t) = \frac{t^2}{2} + o(t^2)$, we finally have
	\begin{equation}
	\varepsilon_0 = o(1),\quad p_0 = O\left(\dfrac{\log n}{n^{r-1}}\right)
	\end{equation}
	
	
	Choose $r=3$. The rest of the proof finishes with Gronwall's lemma and the Borel-Cantelli Lemma in the same way as \cite{sigm}.
	
\end{proof}

\begin{cor}\label{cor: detmain}
	Suppose that
	\begin{enumerate}[(a)]
		\item $(\bQ^{(n)}(0), \overline{\mathbf{W}}(0))$ is deterministic
		\item $\displaystyle\lim_{n\rightarrow\infty}\|\bX^{(n)}(0) - \bx(0)\|_1 = 0$
		\item $\|\bx(0)\|_1<+\infty$
	\end{enumerate} 
	then for every finite $T>0$
	\begin{equation}
	\lim_{n\rightarrow\infty} \sup_{0\leq t\leq T}\|\bX^{(n)}(t) - \bx(t)\|_1 = 0\qquad a.s.
	\end{equation}
\end{cor}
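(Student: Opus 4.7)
The proof follows that of Theorem \ref{thm: main} almost verbatim, with only the first chunk $j=0$ requiring separate attention, since in that chunk the random walkers have not yet mixed. The uniform random walker initialization enters the Theorem \ref{thm: main} argument only through the assertion that $\Pr(W_l\{0,s\} = v) = 1/n$ exactly, which supplies the $j=0$ base case of the mixing estimate $|\Pr(W_l\{j,s\} = v \mid \mathcal{G}_{j-1}) - 1/n| \leq 1/n^2$ used in Lemma \ref{lem: key}. For every $j \geq 1$, this bound follows directly from Lemma \ref{lem: mixing} regardless of the initial walker distribution, since each walker takes at least $\T$ transitions between successive chunk boundaries. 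The conditional-independence observation between $\tilde{\bQ}^\pm(T_{j,s})$ and the walker trajectories is in fact easier under the hypothesis of this Corollary: for $j=0$ both initial queues and walker positions are deterministic, hence trivially independent; for $j \geq 1$ the original argument, which relies only on $\mathcal{G}_{j-1}$-measurability of $\tilde{\bQ}$ at the chunk reset times together with independence of service schedules from walker transitions, continues to apply.

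The plan is to treat the $j=0$ chunk by replacing the refined drift estimate of Lemma \ref{lem: key} with a crude almost-sure bound. Only $\T = O(\log n)$ arrivals occur during $[0, T_\T)$, and by Lemma \ref{lem: sumofexp} the sum of their inter-arrival times is at most $\frac{r\log n\,(\T+1)}{\lambda n}$ on an event of probability $1 - en^{-r}$. Hence
\begin{equation*}
|M_i^{a,1}\langle 1\rangle - M_i^{a,1}\langle 0\rangle| \,\leq\, \frac{\T}{n} + \lambda \sum_{s=0}^{\T-1}\tau_{0, s+1} \,=\, O\!\left(\frac{(\log n)^2}{n}\right)
\end{equation*}
with high probability, uniformly in $i$. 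Summed over $i \leq b = \Theta(\log n)$, the $j=0$ contribution to $\|\bM^{a,1}(t)\|_1$ is $O((\log n)^3/n) = o(1)$. Analogous crude bounds control the $j=0$ contributions to $\|\bM^{a,2}(t)\|_1$, $\|\bM^{a,3}(t)\|_1$ and $\|\bM^b(t)\|_1$; none of these requires any property of the walker distribution beyond the facts that walkers move on the graph and at most $\T$ arrivals fall in the chunk. Note also that $\|\bX(T_\T) - \bX(0)\|_1 = O(\T/n) = o(1)$ almost surely, so no initial-condition mismatch is introduced.

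For chunks $j \geq 1$, the proof of Theorem \ref{thm: main} applies without modification, yielding the same asymptotic bounds on $\sup_{t\in[0,T]}\|\bM(t)\|_1$ up to the additive $o(1)$ contribution from the $j=0$ chunk. Gronwall's lemma followed by Borel--Cantelli close the argument exactly as in Theorem \ref{thm: main}. The main (mild) obstacle is bookkeeping: one must verify that uniform walker initialization is used nowhere else in Theorem \ref{thm: main}'s proof beyond the single place identified above; the high-probability events $\mathcal{A}$, $\mathcal{B}$, $\mathcal{C}$, the auxiliary processes $\tilde{\bQ}^\pm$, and the martingale decomposition all transfer unchanged to the deterministic-initial-walker setting.
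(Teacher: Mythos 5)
Your approach is genuinely different from the paper's, and your asserted bookkeeping claim hides a real gap.

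The paper does not re-run Theorem \ref{thm: main} chunk-by-chunk. Instead it uses a conditioning trick: re-run Theorem \ref{thm: main} under uniform walker initialization with a larger exponent $r = d+3$, so that the failure probability is $O(\log n / n^{d+2})$; then observe that since $\Pr(\overline{\mathbf{W}}(0) = \overline{\mathbf{w}}) = (kn)^{-d}$ exactly under uniform initialization, conditioning on any fixed $\overline{\mathbf{w}}$ can inflate the failure probability by at most $(kn)^d$, leaving a still-summable $O(\log n / n^2)$; Borel--Cantelli then closes. This avoids touching any chunk, any event, or any martingale estimate at all. Your plan of isolating the $j=0$ chunk and bounding it crudely is a plausible alternative route, but it is substantially more work and must be carried out in more places than you identify.

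Specifically, your claim that the events $\mathcal{A}$, $\mathcal{B}$, $\mathcal{C}$ ``all transfer unchanged'' is false for $\mathcal{B}$. The bound $\Pr(\mathcal{B}^c) \leq e^{-\lambda n T} + 2dn^{-r+1}$ is obtained by applying Lemma \ref{lem: bern} to $\sum_{j=1}^N \mathbbm{1}_{V_{1,1}\langle j\rangle > 0}$, which requires $\E[\mathbbm{1}_{V_{1,1}\langle j\rangle>0}\mid \mathcal{G}_{j-2}] \leq 2\T/n$ for \emph{all} $j\geq 1$. For $j=1$ this reduces to $\Pr(V_{1,1}\langle 1\rangle>0) \leq 2\T/n$, which is simply false under deterministic initialization: if the walker starts at (or near) vertex $1$, this probability is $1$. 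The same problem recurs in the Bernstein application to $\tilde{A}_{>b}\langle j+1\rangle - \tilde{A}_{>b}\langle j\rangle$ in the tail-sum lemma, whose $j=0$ conditional-drift bound again relies on near-uniformity of the walker position. Both can be patched by dropping the first chunk from the Bernstein sum and absorbing its almost-sure contribution (at most $\T$, in fact at most $L$ by the girth assumption) into the threshold, but this is an additional modification beyond what your outline concedes. Similarly, Claim \ref{claim: djs} for $j=0$ uses $\Pr(W_1\{0,r\} = W_2\{0,s\}) \leq (1+n^{-1})/n$, which holds only under independent uniform initialization of the two walkers; under deterministic initialization two walkers starting at the same vertex defeat this bound. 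You sidestep that via the crude $j=0$ bound, but you would then also need to re-justify the claim $\delta := 2dN\T^2/n^{1+\alpha}$ used in the corollary that follows, since the $j=0$ compensator term no longer obeys Lemma \ref{lem: key}.

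In short: the route is valid but the bookkeeping you wave away is exactly where the work lies, and the paper's conditioning argument is the cleaner path.
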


\begin{proof}
	For a moment, assume that $\overline{\mathbf{W}}(0)$ is uniform random on $\vec{\mathcal{E}}^d$ (i.e. random walkers are initialized to independent uniform random positions). Choose $r=d+3$ in the proof of Theorem \ref{thm: main}. Then we obtain
	\begin{align}
	\Pr\left(\sup_{t\in [0, T]} \|\bM(t)\|_1 \geq \varepsilon_0 \right) \leq O\left(\dfrac{\log n}{n^{d+2}}\right)
	\end{align}
	
	For each $\overline{\mathbf{w}}\in \vec{\mathcal{E}}^d$, we have
	\begin{align}
	&\quad \Pr\left(\sup_{t\in [0, T]} \|\bM(t)\|_1 \geq \varepsilon_0 \right) \\&\geq (kn)^{-d} \Pr\left(\sup_{t\in [0, T]} \|\bM(t)\|_1 \geq \varepsilon_0~\Big|~\overline{\mathbf{W}}(0) = \overline{\mathbf{w}} \right) 
	\end{align}
	
	Hence
	\begin{align*}
	\Pr\left(\sup_{t\in [0, T]} \|\bM(t)\|_1 \geq \varepsilon_0~\Big|~\overline{\mathbf{W}}(0) = \overline{\mathbf{w}} \right)\leq  O\left(\dfrac{\log n}{n^{2}}\right)
	\end{align*}
	
	The rest of the proof finishes with Gronwall's lemma and the Borel-Cantelli Lemma.
\end{proof}

\begin{cor}\label{cor: main}
	Let $(\bQ^{(n)}(0), \overline{\mathbf{W}}(0))$ be arbitrarily random and correlated. Suppose that
	\begin{enumerate}[(a)]
		\item $\bX^{(n)}(0)$ converges to $\bY(0)$ weakly in $([0, 1]^{\mathbb{Z}_+}, \|\cdot\|_1 )$
		\item $\|\bY(0)\|_1<+\infty$ a.s.
	\end{enumerate} 
	then for every finite $t>0$, $\bX^{(n)}(t)$ converges to $\bY(t)$ weakly in $([0, 1]^{\mathbb{Z}_+}, \|\cdot\|_1 )$, where $\bY(t)$ is the state of the dynamic system \eqref{diffeq} with random initial state $\bx(0) \stackrel{d}{\sim} \bY(0)$
\end{cor}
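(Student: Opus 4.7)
The plan is to reduce to Corollary~\ref{cor: detmain} via a conditioning argument combined with continuous dependence of the ODE \eqref{diffeq} on its initial data. The first step is to observe that the vector field $\mathbf{a}+\mathbf{b}$ governing \eqref{diffeq} is globally Lipschitz on $([0,1]^{\mathbb{Z}_+}, \|\cdot\|_1)$, with Lipschitz constant at most $2\lambda d + 2$ as already noted in the proof of Theorem~\ref{thm: main}. Gronwall's lemma then yields a Lipschitz (hence continuous) flow map $\bx(0)\mapsto \bx(t)$ on $\{\bx\in [0,1]^{\mathbb{Z}_+}: \|\bx\|_1<+\infty\}$ for each fixed $t\geq 0$, with constant $e^{(2\lambda d + 2)t}$.

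Next, by the Skorokhod representation theorem applied to the hypothesis $\bX^{(n)}(0)\Rightarrow \bY(0)$ in $([0,1]^{\mathbb{Z}_+},\|\cdot\|_1)$, I would work on a common probability space on which $\bX^{(n)}(0)\to \bY(0)$ almost surely in $\ell_1$. Let $\hat{\bx}^{(n)}(t)$ denote the solution to \eqref{diffeq} with initial condition $\bX^{(n)}(0)$, and let $\bY(t)$ denote the solution with initial condition $\bY(0)$. Continuity of the flow immediately gives $\hat{\bx}^{(n)}(t)\to \bY(t)$ almost surely in $\ell_1$ for each $t\geq 0$.

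The remaining step, which I expect to be the main obstacle, is to show $\sup_{0\leq t\leq T}\|\bX^{(n)}(t) - \hat{\bx}^{(n)}(t)\|_1 \to 0$ in probability. Here I would condition on $(\bQ^{(n)}(0),\overline{\mathbf{W}}(0))$: the conditional law of the process is that of the NBRW-Po$d$ system launched from a deterministic initial pair, to which Corollary~\ref{cor: detmain} applies. Inspecting its proof, the concentration bound $\Pr(\sup_{t\in[0,T]}\|\bX^{(n)}(t) - \hat{\bx}^{(n)}(t)\|_1 \geq \varepsilon_0)\leq O(\log n/n^{2})$ depends on the deterministic initial data only through $\|\bX^{(n)}(0)\|_1$ (entering through the threshold $\|\bx(0)\|_1+1$ used in defining the event $\mathcal{B}$ and the tail term $\lambda T\bigl(\tfrac{\|\bx(0)\|_1+1}{\rho\log n+L}\bigr)^d$). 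In particular it remains $o(1)$ uniformly in the initial condition over any $\ell_1$-bounded set. Since $\|\bY(0)\|_1<+\infty$ almost surely and weak convergence in $\ell_1$ implies tightness of $\|\bX^{(n)}(0)\|_1$, for any $\eta>0$ one can pick $M$ so that $\Pr(\|\bX^{(n)}(0)\|_1>M)<\eta$ for all large $n$; on the complementary event the uniform conditional bound above gives vanishing probability that the error exceeds any fixed positive level.

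Combining the two ingredients, $\|\bX^{(n)}(t)-\hat{\bx}^{(n)}(t)\|_1\to 0$ in probability and $\hat{\bx}^{(n)}(t)\to \bY(t)$ almost surely, yields $\bX^{(n)}(t)\to \bY(t)$ in probability on the Skorokhod space, hence $\bX^{(n)}(t)\Rightarrow \bY(t)$ on the original space. The delicate point is verifying that the probability bound in Corollary~\ref{cor: detmain} is genuinely uniform over all deterministic initial queue configurations with $\|\bx(0)\|_1\leq M$ and over all initial directed edges for the $d$ walkers; this requires re-reading the proof of Theorem~\ref{thm: main} and checking that every parameter ($b$, $\varphi$, $\varepsilon_0$, $p_0$) depends on the initial condition only through an upper bound for $\|\bx(0)\|_1$, which indeed is the case.
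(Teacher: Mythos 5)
Your proposal is correct and ends at the same place, but it takes a somewhat different route than the paper. The paper also begins with Skorokhod representation, but then applies Corollary~\ref{cor: detmain} \emph{directly and conditionally}: given the Skorokhod realization $(\ddot{\bX}^{(n)}(0))_n$ of the initial data, the a.s.\ convergence $\ddot{\bX}^{(n)}(0)\to\ddot{\bY}(0)$ in $\ell_1$ and $\|\ddot{\bY}(0)\|_1<\infty$ hold pathwise, so Corollary~\ref{cor: detmain} (whose hypotheses allow the system's initial state to merely \emph{converge} to the ODE's initial state) yields $\sup_{t\le T}\|\ddot{\bX}^{(n)}(t)-\ddot{\bY}(t)\|_1\to 0$ conditionally a.s., hence unconditionally a.s., hence weak convergence. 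The Gronwall/flow-continuity step that bridges ``random initial data'' and ``fixed ODE target'' is already internal to the proof of Theorem~\ref{thm: main}, so the paper does not need a separate flow-Lipschitz argument or an intermediate trajectory. You instead interpolate through $\hat{\bx}^{(n)}(t)$, the ODE solution seeded at the (random) $\bX^{(n)}(0)$, and then invoke Lipschitz continuity of the flow to compare $\hat{\bx}^{(n)}(t)$ to $\bY(t)$. This is valid, but it costs you something: Corollary~\ref{cor: detmain} as stated is an asymptotic statement for a \emph{fixed} target trajectory $\bx(\cdot)$, so seeding the ODE at $\bX^{(n)}(0)$ forces you to reopen the proof of Theorem~\ref{thm: main} and extract the non-asymptotic concentration bound, then verify its uniformity over deterministic initial conditions with $\|\bX^{(n)}(0)\|_1\le M$ and over all starting directed edges for the walkers. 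You flag this as ``the delicate point'' and sketch why it works (the parameters $\varphi$, $\varepsilon_0$, $p_0$ depend on the initial state only through $\|\bx(0)\|_1$, and the walker-initialization is handled by the $(kn)^d$ union-bound trick already in Corollary~\ref{cor: detmain}'s proof), but you do not carry it out. The paper's route avoids that re-verification entirely. Both proofs are essentially a Skorokhod-then-condition argument and reach the same weak-convergence conclusion; the paper's is shorter because it lets the already-proved deterministic corollary absorb the flow-continuity work.
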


\begin{proof}
	$\|\bY(0)\|_1<+\infty$ a.s. means that $\Pr(\bY\in \ell_1([0, 1])) = 1$. Since $\ell_1([0, 1])\subset [0, 1]^{\mathbb{Z}_+}$ is separable with respect to the $\|\cdot\|_1$ metric, we conclude that $\bY$ has separable support. By the Skorokhod representation theorem, there exist a sequence of random vector $(\ddot{\bX}^{(n)}(0))$ and a random vector $\ddot{\bY}(0)$ such that $\ddot{\bX}^{(n)}(0) \stackrel{d}{\sim} \bX^{(n)}(0), \ddot{\bY}(0) \stackrel{d}{\sim} \bY(0)$ and
	\begin{equation}\label{skasconv}
	\lim_{k\rightarrow\infty}\|\ddot{\bX}^{(n)}(0) - 
	\ddot{\bY}(0)\|_1 \xrightarrow{k\rightarrow\infty }0\qquad a.s.
	\end{equation}
	
	Given $(\ddot{\bX}^{(n)}(0))_{n}$, construct the sequence $(\ddot{\bQ}^{(n)}(0), \ddot{\mathbf{W}}^{(n)}(0))_n$ such that $(\ddot{\bQ}^{(n)}(0), \ddot{\mathbf{W}}^{(n)}(0)) \stackrel{d}{\sim} (\bQ^{(n)}(0), \mathbf{W}^{(n)}(0))$, accordingly an on the same probability space.
	
	Construct $(\ddot{\bQ}^{(n)}(t), \ddot{\mathbf{W}}^{(n)}(t))_n$ such that $(\ddot{\bQ}^{(n)}(t), \ddot{\mathbf{W}}^{(n)}(t))$ evolves independently for each $n$. Applying Corollary \ref{cor: detmain} we have 
	\begin{align}
	\Pr\left(\lim_{n\rightarrow\infty} \sup_{0\leq t\leq T}\|\ddot{\bX}^{(n)}(t) - \ddot{\bY}(t)\|_1 = 0~\Big|~(\ddot{\bX}^{(n)}(0) )_n \right) = 1
	\end{align}
	for $\omega$'s such that $\ddot{\bX}^{(n)}(0) (\omega)$ converges in $\ell_1$ to $\ddot{\bY}(0)(\omega)$ and $\|\ddot{\bY}(0)(\omega)\|_1 < +\infty$. 
	
	By \eqref{skasconv}, $\ddot{\bX}^{(n)}(0) (\omega)$ converges in $\ell_1$ to $\ddot{\bY}(0)(\omega)$ and $\|\ddot{\bY}(0)(\omega)\|_1 < +\infty$ for almost all $\omega$, hence we have
	\begin{align}\label{ddotxasconv}
	\Pr\left(\lim_{n\rightarrow\infty} \sup_{0\leq t\leq T}\|\ddot{\bX}^{(n)}(t) - \ddot{\bY}(t)\|_1 = 0 \right) = 1
	\end{align}
	
	In particular, \eqref{ddotxasconv} implies that $\bX^{(n)}(t) \xRightarrow{n\rightarrow\infty} \bY(t)$ for each finite $t$.
\end{proof}

\subsection{Stability of NBRW-Po$d$}\label{sec: stability}
In this section we will show that the proposed scheme stablizes the queuing system for every finite $n$. To achieve this, we need some non-asymptotic assumption on the graph $G^{(n)}$. For this section, we only impose the following minimal assumption on the graph $G$:
\begin{assump}
	$G$ is connected and aperiodic.
\end{assump}

\begin{thm}\label{thm: sta}
	The Markov Process $(\bQ(t), \overline{\mathbf{W}}(t))$ is irreducible and positive recurrent, and hence $\bQ(t)\xRightarrow{t\rightarrow\infty} \hat{\bQ}$ for some $\hat{\bQ}$.
\end{thm}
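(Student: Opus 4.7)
The process lives on the countable state space $\mathcal{S}=\mathbb{Z}_+^n\times \vec{\mathcal{E}}^d$ and has total transition rate bounded by $(\lambda+1)n$, so uniformization is available. I would (i) establish irreducibility from connectedness and aperiodicity of $G$, (ii) verify positive recurrence of the uniformized discrete-time chain by applying the future-drift Foster--Lyapunov variant (Lemma \ref{lem: tsfl}) with Lyapunov function $V(\bQ,\overline{\mathbf{W}})=\|\bQ\|_1$, and (iii) lift positive recurrence to the continuous-time process.

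Irreducibility follows because a connected aperiodic $k$-regular graph ($k\geq 3$) induces an irreducible and aperiodic NBRW on its directed edge set (the directed line graph is strongly connected and aperiodic), so each of the $d$ independent walkers is ergodic on $\vec{\mathcal{E}}$ with uniform stationary distribution. Combining walker mixing with arrivals (which can be routed to any target queue by first driving the walkers into a chosen configuration) and with potential services (which can deplete any non-empty queue), every state in $\mathcal{S}$ is reachable from every other in finitely many steps with positive probability.

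For positive recurrence, let $X_j=(\bQ[j],\overline{\mathbf{W}}[j])$ be the uniformized chain; the moment condition of Lemma \ref{lem: tsfl} is immediate since $|V(X_{j+1})-V(X_j)|\leq 1$ a.s. Decomposing $V(X_{k+K})-V(X_{k+L})$ into arrivals minus effective services over the window $[L,K-1]$ yields
\begin{align*}
\E[V(X_{k+K})-V(X_{k+L})\mid X_k] = \frac{(K-L)\lambda}{\lambda+1} - \frac{1}{(\lambda+1)n}\sum_{j=L}^{K-1}\E[N_{ne}(k+j)\mid X_k],
\end{align*}
where $N_{ne}(t):=|\{i:Q_i[t]\geq 1\}|$. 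I would pick $L<K$ as large integers (possibly depending on $n$, which is fixed here) so that during the warm-up window $[k,k+L]$ the NBRW has time to mix and the $\Theta(L)$ NBRW-Po$d$ arrivals---which greedily target the shortest (hence often empty) queues among $d$ approximately-uniform samples---populate nearly every empty queue; then $N_{ne}(k+j)\geq (1-\delta)n$ throughout $[L,K-1]$ with high probability whenever $\|\bQ[k]\|_1>M$. For $\delta<1-\lambda$ the per-step drift $\frac{\lambda-(1-\delta)}{\lambda+1}$ is strictly negative, and summing over $K-L$ steps gives the required $-\epsilon$ drift outside the finite exceptional set $B=\{x:\|\bQ(x)\|_1\leq M\}$ (finite since only finitely many $(\bQ,\overline{\mathbf{W}})\in\mathcal{S}$ satisfy the bound).

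The continuous-time extension is standard: because the uniformization rate is bounded, positive recurrence of the discrete-time chain transfers to $(\bQ(t),\overline{\mathbf{W}}(t))$, giving a unique stationary distribution $\hat\bQ$ and the weak convergence $\bQ(t)\Rightarrow\hat\bQ$. The crux of the argument is the drift estimate. Without the high-girth expander hypothesis, Lemma \ref{lem: mixing} is not available, so I must rely on bare ergodicity of the NBRW on $\vec{\mathcal{E}}$ (which follows from connectedness and aperiodicity) to conclude that for $L$ large the walker distribution is approximately uniform, together with the greedy nature of NBRW-Po$d$ to force the empty queues to fill during the warm-up. Making this quantitative for arbitrary connected aperiodic $k$-regular graphs---i.e., controlling the NBRW mixing time and the rate at which empty queues become populated---is the main technical obstacle.
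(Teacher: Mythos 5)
Your irreducibility sketch and the lift to continuous time agree with the paper. The genuine divergence---and the gap---is in the positive-recurrence step, where you take the linear Lyapunov function $V=\|\bQ\|_1$ and try to obtain negative drift by arguing that, from any state with $\|\bQ[k]\|_1>M$, a warm-up of length $L$ drives the number of non-empty queues $N_{ne}$ above $(1-\delta)n$ with $\delta<1-\lambda$.

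That claim is false, and for a structural reason that no choice of $L$, $K$ or $M$ can repair: the stationary fraction of non-empty servers under this scheme (and already under ordinary power-of-$d$) is $\hat{x}_1=\lambda<1-\delta$. From $\bQ[k]=(M,0,\ldots,0)$ with $M$ arbitrarily large, queues $2,\ldots,n$ start empty and during the warm-up relax toward equilibrium, so $N_{ne}(k+j)/n$ climbs toward $\lambda$, not past it; the per-step linear drift $\frac{\lambda}{\lambda+1}-\frac{\E[N_{ne}]}{(\lambda+1)n}$ tends to $0$, not to a negative constant, on an infinite family of states outside any finite $B$. The same obstruction is already visible for plain random routing with $n$ independent $M/M/1$ queues: $\|\bQ\|_1$ has strictly positive drift from $(M,0,\ldots,0)$ as soon as $\lambda n>1$, even though the system is stable. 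A total-work Lyapunov function is blind to concentration, which is exactly the regime where $\|\bQ\|_1$ is large. You flag ``controlling the NBRW mixing time and the rate at which empty queues become populated'' as the obstacle, but the problem is not quantitative---the bound you want simply does not hold at equilibrium.

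The paper escapes this by working with the quadratic $V(\mathbf{q})=\sum_i q_i^2$ on the arrival-sampled chain $(\bQ\{j\},\overline{\mathbf{W}}\{j\})$. Expanding the increment produces the cross term $-2\sum_i Q_iS_i+2Q_{i^*}$, which is where greediness enters: $Q_{i^*}\leq Q_{W_1}$, and for any fixed connected aperiodic $G$ (no expander or girth hypothesis) there is a finite $K$ with $\Pr(W_1\{K+1\}=v\mid W_1\{0\},W_1\{1\})\leq\frac{1+\lambda}{2\lambda}\cdot\frac{1}{n}$ for all $v$, whence $\E[Q_{W_1\{K\}}\{0\}\mid\bQ\{0\},\overline{\mathbf{W}}\{0\}]\leq\frac{1+\lambda}{2\lambda}\cdot\frac{1}{n}\|\bQ\{0\}\|_1$. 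Against the uniform-service contribution $\frac{1}{\lambda n}\|\bQ\{0\}\|_1$, and using $\frac{1+\lambda}{\lambda n}<\frac{2}{\lambda n}$ for $\lambda<1$, this yields a drift bounded by $-\frac{1-\lambda}{\lambda n}\|\bQ\{0\}\|_1+C$, uniformly $\leq-\epsilon$ outside a finite set. That is the $(L,K)=(K,K+1)$-window drift fed to Lemma \ref{lem: tsfl}, and it lifts to the uniformized and continuous-time chains as you outline. So the fix is to switch to $\sum_i Q_i^2$ and let a mixing-time bound on $\E[Q_{W_1\{K\}}]$, rather than a count of non-empty queues, supply the negativity.
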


Recall from \cite{sigm} that we have two down-sampled versions of the process $\bQ(t)$. If sampled at arrivals and potential departures:
\begin{equation}
\bQ[j+1] = (\bQ[j] + \mathbf{R}[j](1-\Lambda[j]) -\mathbf{S}[j]\Lambda[j] )_+
\end{equation}

If sampled at arrivals:
\begin{equation}
\bQ\{j+1\} = (\bQ\{j\} + \mathbf{R}\{j\} - \mathbf{S}\{j\})_+
\end{equation}

\begin{lemma}\label{lem: sub}
	$(\bQ\{j\}, \overline{\mathbf{W}}\{j\})$ is irreducible and positive recurrent.
\end{lemma}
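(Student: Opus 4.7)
We separately establish irreducibility and positive recurrence of the embedded chain $(\bQ\{j\}, \overline{\mathbf{W}}\{j\})$ on the countable state space $\mathbb{Z}_+^n \times \vec{\mathcal{E}}^d$.

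For irreducibility, the plan is to fix arbitrary states $s = (\bQ, \overline{\mathbf{w}})$ and $s' = (\bQ', \overline{\mathbf{w}}')$ and exhibit a positive-probability path from $s$ to $s'$ in the embedded chain. Because $G$ is connected and aperiodic with $k \geq 3$, the non-backtracking walk on the directed edge set $\vec{\mathcal{E}}$ is irreducible and aperiodic, and therefore so is its $d$-fold product on $\vec{\mathcal{E}}^d$; any tuple of directed edges is reachable from any other in finitely many NBRW steps with positive probability. At each embedded (i.e.\ arrival) step we separately control the queue state: via the exponential inter-arrival times we can, with positive probability, allow any prescribed number of jobs to complete service before the next arrival, and the arriving job is routed to whichever queue the walkers select. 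Concatenating a drain phase that brings the queues to $\mathbf{0}$, a build phase that deposits jobs to construct $\bQ'$, and a final walker-steering phase (with zero-service gaps between arrivals) realises the path from $s$ to $s'$.

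For positive recurrence we apply Lemma~\ref{lem: tsfl} with Lyapunov function $V(\bQ, \overline{\mathbf{w}}) = \sum_i Q_i$, lag $L$ chosen to be a mixing time for the $d$-fold NBRW on $\vec{\mathcal{E}}^d$ (finite by the irreducibility and aperiodicity just established, for fixed $n$), window length $K - L = \lceil C n \rceil$ for a sufficiently large constant $C$, and finite set $B = \{V \leq M\}$ for a sufficiently large $M$ (finite because $Q_i \leq M$ and $\vec{\mathcal{E}}^d$ is finite). The condition $\E[V(X_{k+1})] < \infty$ whenever $\E[V(X_k)] < \infty$ is immediate since each step increments $V$ by at most one. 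The required drift estimate $\E[V(X_{k+K}) - V(X_{k+L}) \mid X_k] \leq -\epsilon$ for $V(X_k) > M$ is derived by writing the drift as the deterministic $+(K-L)$ from arrivals minus the expected time-integrated number of busy servers on $[T_{k+L}, T_{k+K})$. Starting the window at step $k+L$ means the walker positions are within small total variation of uniform on $\vec{\mathcal{E}}^d$ independently of $X_k$, so over the $\Theta(n)$ subsequent arrivals the NBRW-Po$d$ dispatches are closely coupled with those of the vanilla power-of-$d$ scheme; under the coupled power-of-$d$ dynamics the workload drifts negatively in the subcritical regime $\lambda < 1$, yielding the bound.

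The main obstacle is making the "services exceed arrivals" step rigorous when $V(X_k)$ is large but possibly concentrated on few queues, so that the instantaneous service rate is small and the naive bound could give positive drift. We expect to resolve this either by augmenting the Lyapunov function with a small multiple of the idle-server count $(n - \#\{i : Q_i > 0\})$ to penalise imbalance, or by arguing that during the $\Theta(n)$-step window the (coupled) power-of-$d$ dispatcher populates at least $(\lambda + \delta) n$ busy servers with high probability, so that the time-integrated busy-server count strictly exceeds $K - L$. This is analogous to the drift argument of \cite{sigm}, where the role played there by the reset property is played here by the finite-time mixing provided by irreducibility and aperiodicity of the NBRW. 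Once the outside-$B$ inequality is in place, the inside-$B$ drift is bounded deterministically by $K - L$, so choosing $A = K - L$ completes the hypotheses \eqref{negdrift} of Lemma~\ref{lem: tsfl} and yields positive recurrence.
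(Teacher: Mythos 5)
Your irreducibility argument is essentially the paper's: concatenate positive-probability moves that steer the walkers and add/remove one job at a time, using exponential inter-arrival gaps to decouple arrivals and potential services. That part is fine.

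The positive recurrence part has a genuine gap, and it lies exactly where you flagged it. You take the \emph{linear} Lyapunov function $V=\sum_i Q_i$ with a long window $K-L=\Theta(n)$, and the one-step drift of $V$ at an arrival is $1-(\text{busy fraction})/\lambda$. When the mass is concentrated on a few servers this is strictly positive, and neither of your suggested repairs closes the hole as written. Adding a small multiple of the idle-server count only contributes a term bounded uniformly by $O(n)$, so it cannot dominate a $V$ that is unboundedly large with arbitrarily few busy servers; one would have to verify a coupled drift inequality for the \emph{pair} $(\sum_i Q_i,\,\#\text{idle})$, which is not sketched. The alternative --- that over a $\Theta(n)$-step window the dispatcher populates $(\lambda+\delta)n$ busy servers w.h.p.\ uniformly over all initial configurations --- is itself a nontrivial transient-stability statement that you would need to prove, and it is not supplied by the mixing of the walkers alone.

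The paper avoids all of this by using the \emph{quadratic} Lyapunov function $V(\mathbf q)=\sum_i q_i^2$ and the ``future one-step drift'' with $L=K$ (a mixing time) and $K\to K+1$, i.e.\ $K-L=1$ in Lemma~\ref{lem: tsfl}. Expanding the square, the arrival contributes a term $2Q_{i^*}\le 2Q_{W_1\{K\}}$, and the mixing of a \emph{single} walker (estimate~\eqref{mixingtimedef}) gives $\E[Q_{W_1\{K\}}\{K\}\mid\cdot]\lesssim\frac{1+\lambda}{2\lambda}\cdot\frac{1}{n}\sum_i Q_i$, while the service term contributes $-\frac{2}{\lambda n}\sum_i Q_i$. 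These cancel to $-\frac{1-\lambda}{\lambda n}\sum_i Q_i + O(1)$ \emph{without any claim about the number of busy servers}: the quadratic weighting automatically charges extra for long queues and does not care whether the load is balanced. This cancellation is the content the linear-$V$ argument is missing. Also note that the ``$\E[V(X_{k+1})]<\infty$ propagates'' step, which you call immediate for your linear $V$, requires a Cauchy--Schwarz argument for the quadratic $V$ used in the paper.
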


\begin{proof}
	Let $\Omega = \mathbb{Z}_+^n\times \vec{\mathcal{E}}^d$ be the state space of the Markov Chain $(\bQ\{j\}, \overline{\mathbf{W}}\{j\} )$. 
	
	First, we need to show that $(\bQ\{j\}, \overline{\mathbf{W}}\{j\})$ is irreducible: Define $P^{(t)}(\mathbf{q}, \overline{\mathbf{w}}; \mathbf{q}', \overline{\mathbf{w}}'):=\Pr(\bQ\{t\} = \mathbf{q}', \overline{\mathbf{W}}\{t\} = \overline{\mathbf{w}}'~|~\bQ\{0\} = \mathbf{q}, \overline{\mathbf{W}}\{0\} = \overline{\mathbf{w}} )$
	\begin{itemize}
		\item For every $\mathbf{q}\in \mathbb{Z}_+^n, \overline{\mathbf{w}}, \overline{\mathbf{w}}'\in \vec{\mathcal{E}}^d$, $(\mathbf{q}, \overline{\mathbf{w}}')$ is accessible from $(\mathbf{q}, \overline{\mathbf{w}})$ : Since the graph is connected and aperiodic, the non-backtracking random walk on $G$ converges to its stationary distribution: uniform on all directed edges. Hence in particular, there exist a time $K$ such that 
		\begin{equation}
		\Pr(\overline{W}_l\{K\} = \overline{w}_l'~|~\overline{W}_l\{0\} = \overline{w}_l ) > 0\qquad \forall l > 0
		\end{equation}
		which implies that
		\begin{equation}
		\Pr(\overline{\mathbf{W}}\{K\} = \overline{\mathbf{w}}'~|~\overline{\mathbf{W}}\{0\} = \overline{\mathbf{w}} ) > 0
		\end{equation}
		
		With positive probability, $\mathbf{S}\{j\} = \mathbf{R}\{j\}$ for all $j=0,1,\cdots, K-1$ (i.e. in $K$ steps, all assigned jobs ($\mathbf{R}\{j\}$'s) are immediately canceled by services ($\mathbf{S}\{j\}$'s)). Hence 
		\begin{equation}
		P^{(K)}(\mathbf{q}, \overline{\mathbf{w}}; \mathbf{q}, \overline{\mathbf{w}}') > 0
		\end{equation}
		
		\item For every $\mathbf{q}\in \mathbb{Z}_+^n, i\in [n], \overline{\mathbf{w}}\in \vec{\mathcal{E}}^d$, $(\mathbf{q}+\mathbf{e}_i, \overline{\mathbf{w}})$ is accessible from $(\mathbf{q}, \overline{\mathbf{w}})$: Let $\overline{\mathbf{w}}'$ be such that all of $\overline{w}_1', \cdots, \overline{w}_d'$ are pointing towards vertex $i$. There exist $K_1>, K_2>0$ such that
		\begin{equation}
		\Pr(\overline{\mathbf{W}}\{K_1\} = \overline{\mathbf{w}}'~|~\overline{\mathbf{W}}\{0\} = \overline{\mathbf{w}} ) > 0
		\end{equation}
		\begin{equation}
		\Pr(\overline{\mathbf{W}}\{K_1+K_2\} = \overline{\mathbf{w}}~|~\overline{\mathbf{W}}\{K_1\} = \overline{\mathbf{w}}' ) > 0
		\end{equation}
		
		With positive probability, $\mathbf{S}\{j\} = \mathbf{R}\{j\}$ for all $j=0,1,\cdots, K_1-1, K_1+1, \cdots, K_1+K_2-1$ and $\mathbf{S}\{K_1\} = \mathbf{0}$ (i.e. in $K_1+K_2$ steps, all assigned jobs are immediately canceled out by services except the at the $K_1$-th step, where an arrival is assigned to server $i$). We conclude
		\begin{align}
		P^{(K_1+K_2)}(\mathbf{q}, \overline{\mathbf{w}}; \mathbf{q}+\mathbf{e}_i, \overline{\mathbf{w}}) > 0
		\end{align}
		\item For every $i\in [n]$, every $\mathbf{q}\in \mathbb{Z}_+^n$ such that $q_i>0$, $ \overline{\mathbf{w}}\in \vec{\mathcal{E}}^d$, $(\mathbf{q}-\mathbf{e}_i, \overline{\mathbf{w}})$ is accessible from $(\mathbf{q}, \overline{\mathbf{w}})$: Let $K>0$ be such that
		\begin{equation}
		\Pr(\overline{\mathbf{W}}\{K\} = \overline{\mathbf{w}}~|~\overline{\mathbf{W}}\{0\} = \overline{\mathbf{w}} ) > 0
		\end{equation}
		
		With positive probability, $\mathbf{S}\{0\} = \mathbf{R}\{0\} + e_i$, and $\mathbf{S}\{j\}  = \mathbf{R}\{j\}$ for all $j=1,\cdots, K-1$ (i.e. in $K$ steps, all assigned jobs are immediately canceled out by services, and queue $i$ receives one extra service). Hence
		\begin{equation}
		P^{(K)}(\mathbf{q}, \overline{\mathbf{w}}; \mathbf{q}-\mathbf{e}_i, \overline{\mathbf{w}}) > 0
		\end{equation}
		
	\end{itemize}

	From the above argument, we see that every state in $\Omega$ is accessible from every other state. Hence the chain is irreducible.
	
	Define a Lyapunov function $V: \mathbb{Z}_+^n\mapsto \mathbb{R}_+$
	\begin{equation}
	V(\mathbf{q}) := \sum_{i=1}^n q_i^2
	\end{equation}
	
	Now, we compute the drift. For any $j\geq 0$,
	\begin{align}
	&\quad V(\bQ\{j+1\}) - V(\bQ\{j\})\\
	&= \sum_{i=1}^n [(Q_i\{j\} + R_i\{j\} - S_i\{j\}  )_+^2 - (Q_i\{j\})^2]\\
	&\leq \sum_{i=1}^n [(Q_i\{j\} + R_i\{j\} - S_i\{j\}  )^2 - (Q_i\{j\})^2]\\
	&= 2\sum_{i=1}^n Q_i\{j\}(R_i\{j\} - S_i\{j\}) + \sum_{i=1}^n (R_i\{j\} -S_i\{j\})^2\\
	&=-2\sum_{i=1}^n Q_i\{j\} S_i\{j\} + \sum_{i=1}^n (S_i\{j\})^2 + 2Q_{i^*}\{j\} \\
	&\qquad + 1 -2S_{i^*}\{j\}\\
	&\leq -2\sum_{i=1}^n Q_i\{j\} S_i\{j\} + \sum_{i=1}^n (S_i\{j\})^2 + 2Q_{W_1\{j\}}\{j\} \\
	&\qquad + 1 - 2S_{i^*}\{j\} 
	\end{align}
	where $i^*$ is the queue that the $(j+1)$-th job dispatched to, i.e. $\mathbf{R}\{j\} = e_{i^*}$. By the construction of the scheme we know that
	\begin{equation}
	Q_{i^*}\{j\} =\min_{1\leq l\leq d} Q_{W_l\{j\}}\{j\}
	\end{equation}
	
	Since the graph $G$ is assumed to be connected and aperiodic, the non-backtracking random walk on $G$ is an irreducible and aperiodic Markov Chain. Let $K\in \mathbb{N}$ the mixing time of non-backtracking random walk on the $n$ vertex graph in the sense that
	\begin{equation}\label{mixingtimedef}
	\begin{split}
	&\quad \Pr(W_1^{(n)}\{K+1\} =v~|~W_1^{(n)}\{0\} = u_0, W_1^{(n)}\{1\} = u_1) \\
	&\leq \dfrac{1+\lambda}{2\lambda}\cdot \dfrac{1}{n}\qquad\quad \forall u_0, u_1, v\in G
	\end{split}
	\end{equation}
	
	We have
	\begin{align}
	&~\quad \E[V(\bQ\{K + 1\}) - V(\bQ\{K \})~|~\bQ\{0\}, \overline{\mathbf{W}}\{0\} ]\\
	&\leq -2 \E\left[\sum_{i=1}^n Q_i\{K\} S_i\{K\}~\Big|~\bQ\{0\}, \overline{\mathbf{W}}\{0\}\right] \\
	&\quad + \E\left[\sum_{i=1}^n (S_i\{K\})^2~\Big|~\bQ\{0\}, \overline{\mathbf{W}}\{0\} \right] \\
	&\quad + 2\E\left[Q_{W_1\{K\}}\{K\}~|~\bQ\{0\}, \overline{\mathbf{W}}\{0\}  \right] \\
	&\quad + 1 - 2\E[S_{i^*}\{K\}~|~\bQ\{0\}, \overline{\mathbf{W}}\{0\} ] \\
	&= -\dfrac{2}{\lambda n} \E\left[\sum_{i=1}^n Q_i\{K\} ~\Big|~\bQ\{0\}, \overline{\mathbf{W}}\{0\} \right] + n\left[\dfrac{1}{\lambda n} + \dfrac{2}{(\lambda n)^2}\right] \\
	&\quad + 2\E\left[Q_{W_1\{K\}}\{K\}~|~\bQ\{0\}, \overline{\mathbf{W}}\{0\}  \right]  + 1 - \dfrac{2}{\lambda n} \\
	\end{align}
	
	We have
	\begin{align}
	&~\quad \E[Q_{W_1\{K\}}\{K\}~|~\bQ\{0\}, \overline{\mathbf{W}}\{0\} ]\\
	&\leq \E[Q_{W_1\{K\}}\{0\}~|~\bQ\{0\}, \overline{\mathbf{W}}\{0\} ] + K\\
	&\leq \dfrac{1+\lambda}{2\lambda}\cdot\dfrac{1}{n}\sum_{i=1}^n Q_i\{0\} + K
	\end{align}
	
	Using the dynamics of $Q_i\{j\}$ and the fact that $(a)_+\geq a$ we have
	\begin{equation}
	\sum_{i=1}^n Q_i\{K\} \geq \sum_{i=1}^n Q_i\{0\}+ K - \sum_{s=0}^{K-1}\sum_{i=1}^n  S_i\{s\}
	\end{equation}
	
	Hence
	\begin{align}
	&~\quad \E\left[\sum_{i=1}^n Q_i\{K\} ~\Big|~\bQ\{0\}, \overline{\mathbf{W}}\{0\} \right]\\
	&\geq \sum_{i=1}^n Q_i\{0\}+ K - \E\left[\sum_{s=0}^{K-1}\sum_{i=1}^n  S_i\{s\}~\Big|~\bQ\{0\}, \overline{\mathbf{W}}\{0\} \right]\\
	&=\sum_{i=1}^n Q_i\{0\}+ K - Kn\cdot \dfrac{1}{\lambda n} = \sum_{i=1}^n Q_i\{0\} - \left(\dfrac{1}{\lambda} - 1\right)K
	\end{align}
	
	Combining the above we have
	\begin{align}
	&~\quad \E[V(\bQ\{K + 1\}) - V(\bQ\{K \})~|~\bQ\{0\}, \overline{\mathbf{W}}\{0\} ]\\
	&\leq -\dfrac{2}{\lambda n}\left[ \sum_{i=1}^n Q_i\{0\} - \left(\dfrac{1}{\lambda} - 1\right)K \right] + n \left[\dfrac{1}{\lambda n} + \dfrac{2}{(\lambda n)^2}\right]\\
	&\quad + \dfrac{1+\lambda}{\lambda n} \sum_{i=1}^n Q_i\{0\} + 2K + 1 - \dfrac{2}{\lambda n}\\
	&=-\dfrac{1-\lambda}{\lambda n}\sum_{i=1}^n Q_i\{0\} + \dfrac{2}{\lambda n}\left(\dfrac{1}{\lambda}-1\right)K + \dfrac{1}{\lambda} + \dfrac{2}{\lambda^2 n} \\
	&\qquad + 2K + 1 - \dfrac{2}{\lambda n}\\
	&:=-\dfrac{1-\lambda}{\lambda n}\sum_{i=1}^n Q_i\{0\} + C
	\end{align}
	
	Let
	\begin{equation}
	B:= \left\{(\mathbf{q}, \overline{\mathbf{w}})\in\Omega~|~ \sum_{i=1}^n q_i \leq \dfrac{\lambda n (C+1)}{1-\lambda} \right \}
	\end{equation}
	$B$ is a finite subset of $\Omega$.
	
	Then we have
	\begin{align}
	&\quad \E[V(\bQ \{K+1\} )-V(\bQ \{K\} )~|~\bQ\{0\} = \mathbf{q}, \overline{\mathbf{W}}\{0\} = \overline{\mathbf{w}}] \\
	&\leq -1 + (C+1)\mathbbm{1}_{B}(\mathbf{q}, \mathbf{w})
	\end{align}
	
	Furthermore, we have
	\begin{align}
	V(\mathbf{Q}\{j+1\} ) &= \sum_{i=1}^n (Q_i\{j\} + R_i\{j\} - S_i\{j\})_+^2\\
	&\leq \sum_{i=1}^n (Q_i\{j\} + 1)^2 \\
	&= V(\mathbf{Q}\{j\}) + 2\sum_{i=1}^n Q_i\{j\} + n\\
	\end{align}
	
	Hence
	\begin{align}
	&\quad\E[V(\mathbf{Q}\{j+1\} )] \\
	&\leq \E[V(\mathbf{Q}\{j\})] + 2\E\left[\sum_{i=1}^n Q_i\{j\}\right] + n \\
	&\leq \E[V(\mathbf{Q}\{j\})] + 2\sqrt{n\E\left[\sum_{i=1}^n (Q_i\{j\})^2\right]} + n\\
	&\qquad\qquad\qquad  \text{(Cauchy-Schwartz)}\\
	&=(\sqrt{\E[V(\bQ\{j\})]} + \sqrt{n})^2
	\end{align}
	
	Hence whenever $\E[V(\mathbf{Q}\{j\} )] < +\infty$, we have $\E[V(\mathbf{Q} \{j+1\} )] < +\infty$. Therefore the two conditions of Lemma \ref{lem: tsfl} are checked. We conclude that $(\bQ\{j\}, \overline{\mathbf{W}}\{j\} )$ is irreducible and positive recurrent.
	
\end{proof}

\begin{proof}[Proof of Theorem \ref{thm: sta}]
	Using Lemma 8 from \cite{sigm} and the same argument as in \cite{sigm}, we can conclude that $(\bQ[j], \overline{\mathbf{W}}[j])_{j=0}^\infty$ is irreducible and positive recurrent. Then, through standard arguments relating a continuous time Markov Chain (CTMC) to its uniformized chain, the CTMC $(\bQ(t), \overline{\mathbf{W}}(t))$ is positive recurrent. Hence $\bQ(t)\xRightarrow{t\rightarrow\infty} \hat{\bQ}$ for some random vector $\hat{\bQ}$.
	
\end{proof}

\subsection{Convergence of Stationary Distributions}\label{sec: convsta}
In this section, we will show that the stationary distribution of queue lengths converges to the stationary solution of the differential equation as the system size grows. For the results in this section, we impose Assumption \ref{ass: highgirthexpander} on the graph sequence $\{G_n\}_{n}$.

We first provide a refined moment bound.

\begin{lemma}\label{lem: mmb}
	Let $L:=d\left(\frac{c\log(k-1)}{2\alpha}+1\right)$, we have
	\begin{equation}
	\limsup_{n\rightarrow\infty}\E\left[\dfrac{1}{n}\sum_{i=1}^n \hat{Q}_i^{(n)} \right] \leq \dfrac{1+\lambda + 2\lambda L}{1-\lambda}
	\end{equation}
\end{lemma}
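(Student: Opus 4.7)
The plan is to apply Lemma \ref{lem: momentbound} (the moment-bound Foster--Lyapunov result) to the arrival-sampled Markov chain $(\bQ\{j\}, \overline{\mathbf{W}}\{j\})$, whose positive recurrence was established in Lemma \ref{lem: sub}. I would reuse the quadratic Lyapunov function $V(\mathbf{q},\overline{\mathbf{w}}) = \sum_{i=1}^n q_i^2$, the finite-moment condition verified at the end of the proof of Lemma \ref{lem: sub}, and the one-step drift bound at time $K=\T$ already computed there. The sole modification is a sharper estimate of the arrival contribution to the drift, which replaces an $O(\log n)$ slack term in $C$ by the $O(1)$ constant $2L$ needed to produce an $n$-independent right-hand side.

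Recall the drift bound derived in the proof of Lemma \ref{lem: sub}:
\begin{equation*}
\E[V(\bQ\{K+1\}) - V(\bQ\{K\}) \mid \bQ\{0\},\overline{\mathbf{W}}\{0\}] \leq -\frac{1-\lambda}{\lambda n}\sum_{i=1}^n Q_i\{0\} + C,
\end{equation*}
where $C$ contains the problematic term $2K = 2\T = \Theta(\log n)$. That term comes from the crude inequality $\E[Q_{W_1\{K\}}\{K\} \mid \bQ\{0\},\overline{\mathbf{W}}\{0\}] \leq \E[Q_{W_1\{K\}}\{0\} \mid \cdot] + K$, which merely uses ``a queue can gain at most one unit per arrival step''. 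I will replace this by the deterministic inequality
\begin{equation*}
Q_{W_1\{K\}}\{K\} \leq Q_{W_1\{K\}}\{0\} + L.
\end{equation*}
Its justification uses the high-girth assumption \ref{ass: highgirthexpander}: a single non-backtracking walker on $G^{(n)}$ can visit any fixed vertex at most $\tilde L = \frac{c\log(k-1)}{2\alpha}+1$ times during $\T$ consecutive steps, since two visits at times $t < t'$ would produce a closed non-backtracking walk of length $t'-t$, which for $t'-t \leq 2\lceil\alpha\log_{k-1}n\rceil+1$ contradicts the girth lower bound. Summing over the $d$ independent walkers, vertex $W_1\{K\}$ can be proposed as a dispatch target at most $d\tilde L = L$ times in the window $[0,K)$, so at most $L$ arrivals are routed to it, while services only decrease the queue length.

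Substituting this refined inequality inside the proof of Lemma \ref{lem: sub} leaves every other estimate unchanged and produces
\begin{equation*}
\E[V(\bQ\{K+1\}) - V(\bQ\{K\}) \mid \bQ\{0\},\overline{\mathbf{W}}\{0\}] \leq -\frac{1-\lambda}{\lambda n}\sum_{i=1}^n Q_i\{0\} + C_n,
\end{equation*}
with $C_n = 2L + 1 + \frac{1}{\lambda} + o(1) = \frac{1+\lambda+2\lambda L}{\lambda} + o(1)$ as $n\to\infty$ (the remaining $\frac{2K}{\lambda n}(\frac{1}{\lambda}-1)$, $\frac{2}{\lambda^2 n}$, and $\frac{2}{\lambda n}$ terms all vanish because $K = \T = \Theta(\log n)$). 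Applying Lemma \ref{lem: momentbound} with $L_{\text{Lem}} = K$, $K_{\text{Lem}} = K+1$, $f(\mathbf{q},\overline{\mathbf{w}}) = \frac{1-\lambda}{\lambda n}\sum_i q_i$, and $g(\mathbf{q},\overline{\mathbf{w}}) = C_n$ yields $\frac{1-\lambda}{\lambda n}\E[\sum_i \hat Q_i^{(n)}] \leq C_n$, and dividing through and passing to the limit gives the claimed $\limsup$ bound.

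The key obstacle is the deterministic girth argument bounding visits per walker by $\tilde L$; once that is in hand the rest is a mechanical re-run of Lemma \ref{lem: sub}'s drift calculation combined with the stationary moment bound, Lemma \ref{lem: momentbound}. A minor subtlety is ensuring that the refined inequality uses only ``arrival-sampled'' steps (exactly $K$ of them in $[0,K)$), so the girth argument applied on the trajectory $(W_l\{0\},\ldots,W_l\{K-1\})$ is directly valid; this is automatic because the walker transitions in the arrival-sampled chain inherit the non-backtracking dynamics on $G^{(n)}$.
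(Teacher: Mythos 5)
Your proposal is correct and follows essentially the same approach as the paper: reuse the drift bound from Lemma~\ref{lem: sub} with $K = \T$, but replace the crude bound $Q_{W_1\{K\}}\{K\} \leq Q_{W_1\{K\}}\{0\} + K$ by the girth-based bound $Q_{W_1\{K\}}\{K\} \leq Q_{W_1\{K\}}\{0\} + L$ (any two visits by one walker to a fixed vertex are at least one girth apart, so at most $L = d\tilde{L}$ visits in $\T$ steps), and then invoke Lemma~\ref{lem: momentbound} with the same $f$ and a constant $g$. The only minor omission is not flagging that $\hat{\bQ}$, being stationary for the CTMC, is also stationary for the arrival-sampled chain $(\bQ\{j\},\overline{\mathbf{W}}\{j\})$ by PASTA, which the paper notes explicitly to justify applying Lemma~\ref{lem: momentbound} to that chain and concluding about $\hat{\bQ}$.
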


\begin{proof}
	Set $K=\T$ where $c>0$ is a constant such that Lemma \ref{lem: mixing} is true, we have
	\begin{equation}
	\begin{split}
	&\quad \Pr(W_1^{(n)}\{K+1\} =v~|~W_1^{(n)}\{0\} = u_0, W_1^{(n)}\{1\} = u_1) \\&\leq \dfrac{1}{n} + \dfrac{1}{n^2}\qquad\quad  \forall u_0, u_1, v\in G^{(n)}
	\end{split}
	\end{equation}
	
	For $n\geq \dfrac{2\lambda}{1-\lambda}$, we have $\dfrac{1}{n} + \dfrac{1}{n^2} \leq \dfrac{1+\lambda}{2\lambda} \dfrac{1}{n}$, hence $K$ satisfies \eqref{mixingtimedef}. From the proof of Lemma \ref{lem: sub} we immediately have
	\begin{align}
	&~\quad \E[V(\bQ\{K + 1\}) - V(\bQ\{K \})~|~\bQ\{0\}, \overline{\mathbf{W}}\{0\} ]\\
	&= -\dfrac{2}{\lambda n} \E\left[\sum_{i=1}^n Q_i\{K\} ~\Big|~\bQ\{0\}, \overline{\mathbf{W}}\{0\} \right] + n\left[\dfrac{1}{\lambda n} + \dfrac{2}{(\lambda n)^2}\right] \\
	&\quad + 2\E\left[Q_{W_1\{K\}}\{K\}~|~\bQ\{0\}, \overline{\mathbf{W}}\{0\}  \right]  + 1 - \dfrac{2}{\lambda n}\label{mb1}
	\end{align}
	and
	\begin{align}
	&~\quad \E\left[\sum_{i=1}^n Q_i\{K\} ~\Big|~\bQ\{0\}, \overline{\mathbf{W}}\{0\} \right]\\
	&\geq \sum_{i=1}^n Q_i\{0\} - \left(\dfrac{1}{\lambda} - 1\right)K\label{mb2}
	\end{align}
	
	To obtain a constant moment bound, we need a tighter upper bound of $\E\left[Q_{W_1\{K\}}\{K\}~|~\bQ\{0\}, \overline{\mathbf{W}}\{0\}  \right]$ than in the proof of Lemma \ref{lem: sub}. Notice that under Assumption \ref{ass: highgirthexpander}, any vertex cannot be visited by the random walkers by more than $L:=d\left(\frac{c\log(k-1)}{2\alpha}+1\right)$ times within $K=\T$ timestamps. Hence
	\begin{align}
	&~\quad \E[Q_{W_1\{K\}}\{K\}~|~\bQ\{0\}, \overline{\mathbf{W}}\{0\} ]\\
	&\leq \E[Q_{W_1\{K\}}\{0\}~|~\bQ\{0\}, \overline{\mathbf{W}}\{0\} ] + L\\
	&\leq \dfrac{1+\lambda}{2\lambda}\cdot\dfrac{1}{n}\sum_{i=1}^n Q_i\{0\} + L\label{mb3}
	\end{align}
	
	Combining \eqref{mb1}\eqref{mb2}\eqref{mb3} we obtain the estimate
	\begin{align}
	&~\quad \E[V(\bQ\{K + 1\}) - V(\bQ\{K \})~|~\bQ\{0\}, \overline{\mathbf{W}}\{0\} ]\\
	&\leq -\dfrac{1-\lambda}{\lambda n}\sum_{i=1}^n Q_i\{0\} + \dfrac{2}{\lambda n}\left(\dfrac{1}{\lambda}-1\right)K + \dfrac{1}{\lambda} + \dfrac{2}{\lambda^2 n} \\
	&\qquad + 2L + 1 - \dfrac{2}{\lambda n}
	\end{align}
	
	Notice that by PASTA, $\hat{\bQ}$, defined as the stationary queue vector of the continuous time process $(\bQ(t), \overline{\mathbf{W}}(t) )$, is also stationary with respect to the Markov Chain $(\bQ\{j\}, \overline{\mathbf{W}}\{j\} )$. Define $f(\mathbf{q}, \overline{\mathbf{w}}) = \dfrac{1-\lambda}{\lambda n}\sum_{i=1}^n q_i$, $g(\mathbf{q}, \overline{\mathbf{w}}) \equiv \dfrac{2}{\lambda n}\left(\dfrac{1}{\lambda}-1\right)K + \dfrac{1}{\lambda} + \dfrac{2}{\lambda^2 n} + 2L + 1 - \dfrac{2}{\lambda n}$, then apply Lemma \ref{lem: momentbound}, we obtain
	\begin{align}
	&\quad \dfrac{1-\lambda}{\lambda}\E\left[\dfrac{1}{n}\sum_{i=1}^n \hat{Q}_i \right] \\
	&\leq \dfrac{2}{\lambda n}\left(\dfrac{1}{\lambda}-1\right)\T + \dfrac{1}{\lambda} + \dfrac{2}{\lambda^2 n} + 2L + 1 - \dfrac{2}{\lambda n}\\&\xrightarrow{n\rightarrow\infty} \dfrac{1+\lambda + 2\lambda L}{\lambda}
	\end{align}
	Multiplying both sides by $\frac{\lambda}{1-\lambda}$ we prove the result.
	
\end{proof}

\begin{lemma}\label{lem: unifint}
	For some $\overline{n}\in\mathbb{N}$, 
	\begin{equation}
	\lim_{b\rightarrow\infty} \sup_{n\geq \overline{n}} \E\left[\sum_{i=b+1}^\infty \hat{X}_i^{(n)} \right] = 0
	\end{equation}
\end{lemma}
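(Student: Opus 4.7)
The plan is to reduce the claim to a uniform second-moment bound on the stationary queue lengths, and then obtain that bound from a cubic Lyapunov function combined with Lemma~\ref{lem: momentbound}, mirroring Lemma~\ref{lem: mmb} at one higher moment.

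For the reduction, observe that
\begin{equation}
\sum_{i=b+1}^\infty \hat{X}_i^{(n)} = \frac{1}{n}\sum_{j=1}^n \sum_{i=b+1}^\infty \mathbbm{1}_{\{\hat{Q}_j^{(n)}\geq i\}} = \frac{1}{n}\sum_{j=1}^n (\hat{Q}_j^{(n)}-b)_+.
\end{equation}
An elementary check (the quadratic $q^2-(b+1)q+b(b+1)$ has negative discriminant for $b\geq 1$) gives $(q-b)_+\leq q^2/(b+1)$ for all integer $q\geq 0$. Hence
\begin{equation}
\E\!\left[\sum_{i=b+1}^\infty \hat{X}_i^{(n)}\right] \leq \frac{1}{b+1}\, \E\!\left[\frac{1}{n}\sum_{j=1}^n (\hat{Q}_j^{(n)})^2\right],
\end{equation}
and it suffices to establish $\displaystyle\sup_{n\geq \overline{n}} \E\!\left[\frac{1}{n}\sum_{j=1}^n (\hat{Q}_j^{(n)})^2\right] < +\infty$.

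To obtain this uniform second-moment bound, I would apply Lemma~\ref{lem: momentbound} to the sampled chain $(\bQ\{j\},\overline{\mathbf{W}}\{j\})$ with the Lyapunov function $V(\mathbf{q})=\sum_{i=1}^n q_i^3$, $L'=K=\T$ and $K'=K+1$, as in Lemma~\ref{lem: mmb}. Expanding $(q_i+R_i-S_i)_+^3-q_i^3$ and summing, the one-step drift contributes the negative service term $-\tfrac{3}{\lambda n}\sum_i q_i^2$ (using $\E[S_i]=1/(\lambda n)$) and a positive arrival term proportional to $\E[q_{W_1\{K\}}^2\{K\}\mid \bQ\{0\},\overline{\mathbf{W}}\{0\}]$, together with lower-order terms in $\sum_i q_i$ and constants. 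Using the girth bound exactly as in Lemma~\ref{lem: mmb}, each queue along the random walk grows by at most $L=d\!\left(\tfrac{c\log(k-1)}{2\alpha}+1\right)$ in $K$ steps, so
\begin{equation}
q_{W_1\{K\}}^2\{K\} \leq 2\, q_{W_1\{K\}}^2\{0\} + 2L^2.
\end{equation}
By Lemma~\ref{lem: mixing}, for sufficiently large $n$,
\begin{equation}
\E\!\left[q_{W_1\{K\}}^2\{0\}\mid \bQ\{0\},\overline{\mathbf{W}}\{0\}\right] \leq \frac{1+\lambda}{2\lambda}\cdot \frac{1}{n}\sum_{i=1}^n q_i^2.
\end{equation}
Combining these, the coefficient of $\tfrac{1}{n}\sum_i q_i^2$ in the drift becomes $-\tfrac{3}{\lambda}+\tfrac{3(1+\lambda)}{\lambda}\cdot$ (a suitable fraction), which one arranges to be strictly negative, say $-c_1<0$ with $c_1$ depending only on $\lambda$. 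The remaining terms are absorbed into $c_2\cdot\tfrac{1}{n}\sum_i q_i + c_3$. Then Lemma~\ref{lem: momentbound} with $f(\mathbf{q})=\tfrac{c_1}{n}\sum_i q_i^2$ and $g(\mathbf{q})=\tfrac{c_2}{n}\sum_i q_i + c_3$ (after verifying $\E[V(\bQ\{k+1\})]<+\infty$ whenever $\E[V(\bQ\{k\})]<+\infty$ via Hölder/power-mean, since one step changes each coordinate by at most $1$) yields
\begin{equation}
c_1\, \E\!\left[\frac{1}{n}\sum_{j=1}^n \hat{Q}_j^2\right] \leq c_2\, \E\!\left[\frac{1}{n}\sum_{j=1}^n \hat{Q}_j\right] + c_3,
\end{equation}
and the right-hand side is bounded uniformly in $n$ by Lemma~\ref{lem: mmb}.

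The main obstacle will be the drift computation: unlike the quadratic case, $(q_i+R_i-S_i)_+^3-q_i^3$ does not admit a clean polynomial identity because of the reflection, so one must either use the bound $(q_i+R_i-S_i)_+^3\leq (q_i+R_i)^3$ on the arrival side and a separate argument on the service side, or track the cases $q_i+R_i-S_i\lessgtr 0$ explicitly. Keeping all lower-order terms (in particular the cross terms $q_i(R_i-S_i)^2$, which are $O(\tfrac{1}{\lambda n}\sum q_i)$) small enough to be absorbed into $g$, while retaining the negative sign on the leading $\sum_i q_i^2$ term, is the key algebraic step. Once this is done, the same strategy can be iterated to bound any desired moment, but for this lemma only the second moment is needed.
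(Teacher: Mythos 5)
Your route is genuinely different from the paper's. The paper works with the truncated quadratic Lyapunov function $V^b(\mathbf{q})=\sum_{i=1}^n(q_i-b)_+^2$, whose drift under the sampled chain yields (after Lemma~\ref{lem: momentbound}) a bound of the form
\begin{equation}
\frac{1-\lambda}{\lambda}\,\E\!\left[\frac{1}{n}\sum_{i=1}^n(\hat Q_i-b)_+\right]\ \leq\ \kappa\,\E\!\left[\frac{1}{n}\sum_{i=1}^n\mathbbm{1}_{\{\hat Q_i\geq b-L\}}\right],
\end{equation}
and then uses $\mathbbm{1}_{\{q\geq b-L\}}\leq q/(b-L)$ together with the first-moment bound of Lemma~\ref{lem: mmb}. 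This stays entirely at the level of the first moment; the shift by $b$ is what carries the tail decay. Your proposal instead bootstraps to a uniform \emph{second}-moment bound via a cubic Lyapunov function and then finishes by Markov. That is a legitimate, more general strategy, but it is heavier machinery than needed here, and as written there is a gap that actually breaks the drift sign.

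The gap is in the inequality $q_{W_1\{K\}}^2\{K\}\leq 2\,q_{W_1\{K\}}^2\{0\}+2L^2$. The factor of $2$ kills the negative drift. Concretely, the arrival side then contributes $3\cdot 2\cdot\frac{1+\lambda}{2\lambda}\cdot\frac{1}{n}\sum_i Q_i^2\{0\}=\frac{3(1+\lambda)}{\lambda}\cdot\frac{1}{n}\sum_i Q_i^2\{0\}$, while the service side contributes $-\frac{3}{\lambda}\cdot\frac{1}{n}\sum_i Q_i^2\{0\}$ (up to lower-order corrections from $\sum Q_i^2\{K\}$ vs.\ $\sum Q_i^2\{0\}$), so the net coefficient of $\frac{1}{n}\sum_i Q_i^2$ is $-\frac{3}{\lambda}+\frac{3(1+\lambda)}{\lambda}=3>0$. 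There is no freedom to ``arrange'' this to be negative; one must use the exact expansion $q_{W_1\{K\}}^2\{K\}\leq q_{W_1\{K\}}^2\{0\}+2L\,q_{W_1\{K\}}\{0\}+L^2$, which keeps the leading coefficient at $\frac{3(1+\lambda)}{2\lambda}$ and the net sign at $-\frac{3(1-\lambda)}{2\lambda}<0$; the cross term $2Lq$ is linear and absorbed into $g$.

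On the reflection issue you flag: the alternative $(q_i+R_i-S_i)_+^3\leq(q_i+R_i)^3$ discards the service term entirely, so it cannot produce negative drift. The fix is the one-sided bound $(q+r-s)_+^3\leq(q+r-s)^3+s^3$ (valid for $q,r\geq 0$, $s\geq 0$ since $q+r-s\geq -s$), after which the cubic expands cleanly, the leading term is $3q^2(r-s)$, and the extra $s^3$ is a lower-order term whose conditional expectation is uniformly bounded. You must also verify, as in the paper's quadratic case, that $\sum_i Q_i^2\{K\}\geq\sum_i Q_i^2\{0\}-2\sum_i Q_i\{0\}D_i$ with $D_i$ the accumulated services over the $K$ steps, so that the negative drift can be transported from time $K$ back to time $0$. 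With these two corrections and the $L,K=1$ choice replaced by $L'=K=\T$, $K'=K+1$ as you indicate, the argument goes through and yields a uniform second-moment bound, from which the lemma follows via $(q-b)_+\leq q^2/(b+1)$. It is correct in spirit but requires both fixes to be rigorous, and ends up longer than the paper's truncated-quadratic proof, which never leaves the quadratic level.
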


\begin{proof}
	This proof is similar to the proof of Lemma 10 in \cite{sigm}.
	
	Define a function $V^b:\mathbb{Z}_+^n \mapsto \mathbb{R}_+$
	\begin{equation}
	V^b(\mathbf{q}) := \sum_{i=1}^n v^b(q_i) := \sum_{i=1}^n (q_i-b)_+^2 
	\end{equation}
	
	Same as \cite{sigm}, we have
	\begin{align}
	&\quad V^b(\bQ\{K+1\}) - V^b(\bQ\{K\}) \\
	&\leq -2\sum_{i=1}^n(Q_i\{K\} - b)_+  S_i\{K\} + \sum_{i=1}^n (S_i\{K\})^2 \mathbbm{1}_{\{ Q_i\{K\} \geq b  \} }\\
	&\quad + 2(Q_{W_1\{K\} } - b)_+ + (1-2S_{i^*}\{K\} ) \mathbbm{1}_{\{ Q_{i^*}\{K\} \geq b  \} }
	\end{align}
	where $i^*$ is the queue that the $(K+1)$-th job is dispatched to.
	
	Set $K = \T$ and let $n\geq \frac{2\lambda}{1-\lambda}$ (such that $K$ satisfies Eq. \eqref{mixingtimedef}). Recall that a server can be visited by the random walkers for at most $L$ times. Hence $\mathbbm{1}_{\{Q_i\{0\} \geq b-L \}} \geq \mathbbm{1}_{ \{ Q_i\{K\} \geq b  \} }$. Therefore
	\begin{align}
	&\quad \E[V^b(\bQ\{K+1\}) - V^b(\bQ\{K\})~|~\bQ\{0\}, \overline{\mathbf{W}}\{0\} ] \\
	&\leq -\dfrac{2}{\lambda n}\E\left[\sum_{i=1}^n(Q_i\{K\} - b)_+~\Big|~\bQ\{0\}, \overline{\mathbf{W}}\{0\} \right]   \\
	&\quad + \left(\dfrac{1}{\lambda n} + \dfrac{2}{\lambda^2 n^2}\right) \sum_{i=1}^n \mathbbm{1}_{\{ Q_i\{0\} \geq b-L  \} } \\
	&\quad + 2\E\left[(Q_{W_1\{K\} } - b)_+~|~\bQ\{0\}, \overline{\mathbf{W}}\{0\}\right] \\
	&\quad + \left(1-\dfrac{2}{\lambda n} \right) \E\left[\mathbbm{1}_{\{ Q_{i^*}\{K\} \geq b  \} }~|~ \bQ\{0\}, \overline{\mathbf{W}}\{0\}\right]
	\end{align}
	
	Same as in \cite{sigm}, we have
	\begin{align}
	&\quad (Q_{W_1\{K\}} \{K\} -b)_+ \\
	&\leq (Q_{W_1\{K\}} \{0\} -b)_+ + L\mathbbm{1}_{ \{Q_{W_1 \{K\} }\{0\} \geq b-L  \} }
	\end{align}
	
	Using the fact that $K$ is a mixing time (i.e. satisfies Eq. \eqref{mixingtimedef}), we have
	\begin{align}
	&\quad \E[(Q_{W_1 \{K\} } \{K\} -b )_+~|~\bQ\{0\}, \overline{\mathbf{W}}\{0\}]\\
	&\leq \E[(Q_{W_1\{K\}} \{0\} -b)_+ ~|~\bQ\{0\}, \overline{\mathbf{W}}\{0\}] \\
	&\quad + L \E[\mathbbm{1}_{ \{Q_{W_1 \{K\} }\{0\} \geq b-L  \} } ~|~\bQ\{0\}, \overline{\mathbf{W}}\{0\} ]\\
	&\leq \dfrac{1+\lambda}{2\lambda} \dfrac{1}{n}\sum_{i=1}^n (Q_i\{0\} - b)_+ + L\cdot \dfrac{1+\lambda}{2\lambda} \dfrac{1}{n} \sum_{i=1}^n \mathbbm{1}_{ \{Q_{i }\{0\} \geq b-L  \} }
	\end{align}
	
	Same as in \cite{sigm}, we have
	\begin{align}
	&\quad \sum_{i=1}^n (Q_i\{K\} - b)_+ \\
	&\geq \sum_{i=1}^n (Q_i\{0\} - b)_+ - \sum_{i=1}^n \sum_{s=0}^{K-1} S_i\{s\} \mathbbm{1}_{ \{Q_{i }\{0\} \geq b-L  \} }
	\end{align}
	
	Thus
	\begin{align}
	&\quad \E\left[\sum_{i=1}^n (Q_i\{K\} - b)_+ ~\Big|~ \bQ\{0\}, \overline{\mathbf{W}}\{0\} \right] \\
	&\geq \sum_{i=1}^n (Q_i\{0\} - b)_+ - \dfrac{K}{\lambda n}\sum_{i=1}^n  \mathbbm{1}_{ \{Q_{i }\{0\} \geq b-L  \} }
	\end{align}
	
	We also have
	\begin{align}
	\mathbbm{1}_{ \{ Q_{i^*} \{j\}\geq b  \} } &\leq \mathbbm{1}_{ \{ Q_{W_1\{j\}} \{j\}\geq b  \} }\leq \mathbbm{1}_{ \{ Q_{W_1\{j\}} \{0\}\geq b-L  \} }
	\end{align}
	
	Hence
	\begin{align}
	&\quad \E[\mathbbm{1}_{ \{ Q_{i^*} \{j\}\geq b  \} }~|~\bQ\{0\}, \overline{\mathbf{W}}\{0\} ] \\
	&\leq \E\left[ \mathbbm{1}_{ \{ Q_{W_1\{j\}} \{0\}\geq b-L  \} }~|~ \bQ\{0\}, \overline{\mathbf{W}}\{0\} \right]\\
	&\leq \dfrac{1+\lambda}{2\lambda} \dfrac{1}{n} \sum_{i=1}^n \mathbbm{1}_{ \{ Q_{i} \{0\}\geq b-L  \} }
	\end{align}
	
	Combining all the above, for sufficiently large $n$ we have
	\begin{align}
	&\quad \E[V^b(\bQ\{K+1\}) - V^b(\bQ\{K\})~|~ \bQ\{0\}, \overline{\mathbf{W}}\{0\} ]\\
	&\leq -\dfrac{2}{\lambda n}\left[ \sum_{i=1}^n (Q_i\{0\} - b)_+ - \dfrac{K}{\lambda n}\sum_{i=1}^n \mathbbm{1}_{ \{Q_{i }\{0\} \geq b-L  \} } \right]   \\
	&\quad + \left(\dfrac{1}{\lambda n} + \dfrac{2}{\lambda^2 n^2}\right) \sum_{i=1}^n \mathbbm{1}_{\{ Q_i\{0\} \geq b-L  \} } \\
	&\quad + 2\left( \dfrac{1+\lambda}{2\lambda} \dfrac{1}{n}\sum_{i=1}^n (Q_i\{0\} - b)_+ \right.\\&\qquad\left.+ L\cdot \dfrac{1+\lambda}{2\lambda} \dfrac{1}{n} \sum_{i=1}^n \mathbbm{1}_{ \{Q_{i }\{0\} \geq b-L  \} } \right) \\
	&\quad + \left(1-\dfrac{2}{\lambda n} \right) \dfrac{1+\lambda}{2\lambda} \dfrac{1}{n} \sum_{i=1}^n \mathbbm{1}_{ \{ Q_{i} \{0\}\geq b-L  \} }\\
	&=-\dfrac{1-\lambda}{\lambda} \dfrac{1}{n} \sum_{i=1}^n (Q_i\{0\} - b)_+ \\
	&\quad + \left(\dfrac{2K}{\lambda^2 n} + \dfrac{1}{\lambda} + \dfrac{2}{\lambda^2 n} + \dfrac{(1+\lambda) L}{\lambda} + \dfrac{1+\lambda}{2\lambda} - \dfrac{1+\lambda}{\lambda^2 n}\right)\\
	&\quad\times\dfrac{1}{n} \sum_{i=1}^n \mathbbm{1}_{ \{ Q_{i} \{0\}\geq b-L  \} }
	\end{align}
	
	Since $(\mathbf{Q}\{j\}, \overline{\mathbf{W}}\{j\} )$ is positive recurrent with stationary queue vector $\hat{\bQ}$, applying Lemma \ref{lem: momentbound} we obtain
	\begin{align}
	&\quad~ \dfrac{1-\lambda}{\lambda} \E\left[\dfrac{1}{n} \sum_{i=1}^n (\hat{Q}_i\{0\} - b)_+ \right] \\
	&\leq \left(\dfrac{2\T}{\lambda^2 n} + \dfrac{1}{\lambda} + \dfrac{2}{\lambda^2 n} + \dfrac{(1+\lambda) L}{\lambda} + \dfrac{1+\lambda}{2\lambda} - \dfrac{1+\lambda}{\lambda^2 n}\right) \\
	&\quad\times \E\left[\dfrac{1}{n}  \sum_{i=1}^n \mathbbm{1}_{ \{ \hat{Q}_{i} \{0\}\geq b-L  \} }\right]
	\end{align}
	
	which means that there exist a constant $\kappa_5$ such that for sufficiently large $n$,
	\begin{align}
	&\quad \E\left[\dfrac{1}{n} \sum_{i=1}^n (\hat{Q}_i\{0\} - b)_+ \right] \\
	&\leq \kappa_5 \E\left[\dfrac{1}{n} \sum_{i=1}^n \mathbbm{1}_{ \{ \hat{Q}_{i} \{0\}\geq b-L  \} }\right] \\
	&\leq \kappa_5 \E\left[\dfrac{1}{n} \sum_{i=1}^n \dfrac{\hat{Q}_i}{b-L}\right] = \dfrac{\kappa_5}{b-L} \E\left[\dfrac{1}{n}\sum_{i=1}^n \hat{Q}_i \right]\qquad \forall b>L
	\end{align}
	
	By Lemma \ref{lem: mmb} we know that there exist $\overline{n}_1\in\mathbb{N}$ such that $\E\left[\dfrac{1}{n}\sum_{i=1}^n \hat{Q}_i \right] \leq \dfrac{1+\lambda + 2\lambda L}{1-\lambda}+1$ for all $n\geq\overline{n}_1$.
	
	Hence, there exist $\overline{n}$ (which does not depend on $b$), such that
	\begin{align}
	\sup_{n\geq \overline{n}} \E\left[\dfrac{1}{n} \sum_{i=1}^n (\hat{Q}_i^{(n)} - b)_+ \right] \leq \dfrac{\kappa_5}{b-L}\left( \dfrac{1+\lambda + 2\lambda L}{1-\lambda}+1\right)
	\end{align}
	
	Observe that
	\begin{align}
	\sum_{i=b+1}^\infty \hat{X}_i^{(n)} = \dfrac{1}{n} \sum_{i=1}^n (\hat{Q}_i^{(n)} - b)_+ 
	\end{align}
	
	We conclude
	\begin{align}
	\lim_{b\rightarrow\infty} \sup_{n\geq \overline{n}} \E\left[ \sum_{i=b+1}^\infty \hat{X}_i^{(n)} \right] = 0
	\end{align}
	
\end{proof}

The rest of the proof follows from similar argument as the proof of Theorem 9 and Theorem 10 in \cite{ying2015power}. For completeness, we include a proof here.

\begin{lemma}\label{lem: tight}
	Every subsequence of $(\hat{\bX}^{(n)})$ has a further subsequence that converges in $d_{W_1, \|\cdot\|_1}$
\end{lemma}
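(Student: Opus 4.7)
The plan is to combine a truncation argument with a diagonal extraction, using the uniform tail bound from Lemma \ref{lem: unifint} as the key compactness ingredient.

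For each $b\in\mathbb{N}$, define the truncation $T_b:\ell_1([0,1])\to\ell_1([0,1])$ by $T_b(\mathbf{x}):=(x_1,\ldots,x_b,0,0,\ldots)$, so that $\|\mathbf{x}-T_b(\mathbf{x})\|_1=\sum_{i>b}x_i$. For each fixed $b$, the vectors $T_b(\hat{\bX}^{(n)})$ live in the compact set $[0,1]^b\times\{0\}^\infty\subset \ell_1([0,1])$, so their laws form a tight family. By Prokhorov's theorem, any subsequence admits a further subsequence along which $T_b(\hat{\bX}^{(n)})$ converges weakly; since $\|\cdot\|_1$ is bounded on this compact set, this convergence is also in $d_{W_1,\|\cdot\|_1}$.

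Given any subsequence $\{n_k\}$ of $\{n\}$, I would apply the observation above successively for $b=1,2,\ldots$ and extract a diagonal sub-subsequence $\{n_{k_j}\}$ such that $T_b(\hat{\bX}^{(n_{k_j})})\to\mu_b^*$ in $W_1$ for every $b$. Using the coupling induced by $\hat{\bX}^{(n_{k_j})}$, tightness of the joint distributions of $(T_b,T_{b'})$ on a common compact set, and lower semi-continuity of $W_1$, for $b<b'$,
\begin{align*}
W_1(\mu_b^*,\mu_{b'}^*)&\leq\liminf_{j\to\infty}\E\big[\|T_b(\hat{\bX}^{(n_{k_j})})-T_{b'}(\hat{\bX}^{(n_{k_j})})\|_1\big]\\
&\leq\sup_{n\in\mathbb{N}}\E\Big[\sum_{i>b}\hat{X}_i^{(n)}\Big].
\end{align*}
Lemma \ref{lem: unifint} makes the supremum over $n\geq\overline{n}$ vanish as $b\to\infty$, while each of the finitely many indices $n<\overline{n}$ has individually vanishing tails because $\E[\|\hat{\bX}^{(n)}\|_1]<\infty$ for each $n$ (by positive recurrence, Theorem \ref{thm: sta}, and by applying the argument of Lemma \ref{lem: mmb} at finite $n$). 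Hence $\{\mu_b^*\}$ is Cauchy in $W_1$; since $d_{W_1,\|\cdot\|_1}$ metrizes a complete space on $\ell_1([0,1])$, the limit $\mu^*:=\lim_b \mu_b^*$ exists.

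The proof finishes with the triangle inequality
\[
W_1(\hat{\bX}^{(n_{k_j})},\mu^*)\leq \E\Big[\sum_{i>b}\hat{X}_i^{(n_{k_j})}\Big]+W_1(T_b(\hat{\bX}^{(n_{k_j})}),\mu_b^*)+W_1(\mu_b^*,\mu^*),
\]
where I would pick $b$ large first (to uniformly control the first and third terms using Lemma \ref{lem: unifint} and the Cauchy property), and then let $j\to\infty$ so that the middle term vanishes by construction of the diagonal subsequence. This yields $\hat{\bX}^{(n_{k_j})}\to\mu^*$ in $d_{W_1,\|\cdot\|_1}$. The main technical subtlety I expect is the coupling argument justifying the Cauchy estimate for $\{\mu_b^*\}$, but this becomes routine because the truncations live on compact subsets of $\ell_1([0,1])$ on which $\|\cdot\|_1$ is bounded and continuous, so all weak-limit manipulations of $W_1$ are valid.
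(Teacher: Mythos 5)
Your proof is correct, but it takes a genuinely different route from the paper's. The paper works with the compact product space $([0,1]^{\mathbb{Z}_+},\rho)$ where $\rho(\bx,\mathbf{y})=\sup_i |x_i-y_i|/(i+1)$: tightness there is automatic, Prokhorov gives a weakly convergent sub-subsequence, and Skorokhod representation produces an almost surely convergent realization whose limit $\bY$ is shown to have $\E[\|\bY\|_1]<\infty$ via Fatou and Lemma~\ref{lem: mmb}; the $\rho$-a.s.\ convergence is then upgraded to $L_1$-convergence of $\|\ddot{\bX}^{(n_k)}-\bY\|_1$ by splitting at a truncation level $b$ and invoking Lemma~\ref{lem: unifint} for the tail and bounded convergence for the head. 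You instead compactify level by level: Prokhorov on each finite truncation, a diagonal extraction, the uniform tail bound (Lemma~\ref{lem: unifint}) to show $\{\mu_b^*\}_b$ is Cauchy in $W_1$, completeness of $(\mathcal{P}_1(\ell_1([0,1])),W_1)$ to produce $\mu^*$, and a final three-term triangle inequality. The two arguments lean on the same key ingredient (Lemma~\ref{lem: unifint}) but your version sidesteps the Skorokhod representation and Fatou steps in favor of an explicit metric-space completeness argument. Two minor cosmetic points in your write-up: the Cauchy estimate $W_1(\mu_b^*,\mu_{b'}^*)\le\liminf_j\E[\|T_b(\hat{\bX}^{(n_{k_j})})-T_{b'}(\hat{\bX}^{(n_{k_j})})\|_1]$ is most cleanly justified by the triangle inequality $W_1(\mu_b^*,\mu_{b'}^*)\le W_1(\mu_b^*,T_b(\hat{\bX}^{(n_{k_j})}))+W_1(T_b(\hat{\bX}^{(n_{k_j})}),T_{b'}(\hat{\bX}^{(n_{k_j})}))+W_1(T_{b'}(\hat{\bX}^{(n_{k_j})}),\mu_{b'}^*)$ rather than ``lower semicontinuity'' (the measures on both sides vary with $j$, so plain l.s.c.\ does not apply); and the discussion of the finitely many $n<\overline{n}$ is unnecessary, since the $\liminf_j$ involves only $n_{k_j}\to\infty$, so the supremum in Lemma~\ref{lem: unifint} over $n\ge\overline{n}$ already suffices.
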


\begin{proof}
	Nearly the same as proof of Corollary 5 in \cite{sigm} (except that, the proof here is simpler).
	
	Endow $[0, 1]^{\mathbb{Z}_+}$ with the metric
	\begin{align}
	\rho(\bx, \mathbf{y}) = \sup_{i\geq 0} \dfrac{|x_i-y_i|}{i+1}
	\end{align}
	
	Under the metric $\rho$, $[0, 1]^{\mathbb{Z}_+}$ is a compact separable metric space. Any collection of probability measures is trivially tight in $([0, 1]^{\mathbb{Z}_+}, \rho)$. By the Prokhorov Theorem, every subsequence of $(\hat{\bX}^(n))$ has a further subsequence that converges weakly in $([0, 1]^{\mathbb{Z}_+}, \rho)$. By the Skorokhod representation theorem, there exist a sequence of random vectors $(\ddot{\bX}^{(n_k)})$ such that $\ddot{\bX}^{(n_k)} \stackrel{d}{\sim} \hat{\bX}^{(n_k)}$ with $(\ddot{\bX}^{(n_k)})$ converges in $([0, 1]^{\mathbb{Z}_+}, \rho)$ almost surely. Denote its almost sure limit by $\bY$.
	
	\begin{claim}\label{claim: expfin}
		$ \sum_{i=1}^\infty\E\left[Y_i \right] < +\infty$
	\end{claim}
	
	\begin{proof}[Proof of Claim]
		By Fatou's Lemma, we have
		\begin{equation}\label{eq1000}
		\sum_{i=1}^\infty Y_i = \sum_{i=1}^\infty \liminf_{k\rightarrow\infty} \ddot{X}_i^{(n_k)}  \leq \liminf_{k\rightarrow\infty} \sum_{i=1}^\infty \ddot{X}_i^{(n_k)}  \qquad a.s.
		\end{equation}
		
		Again, by Fatou's Lemma, we have
		\begin{equation}\label{eq1001}
		\E\left[ \liminf_{k\rightarrow\infty} \sum_{i=1}^\infty \ddot{X}_i^{(n_k)}  \right] \leq \liminf_{k\rightarrow\infty} \E\left[\sum_{i=1}^\infty \ddot{X}_i^{(n_k)} \right]
		\end{equation}
		
		By Lemma \ref{lem: mmb} we know that
		\begin{equation}\label{eq1002}
		\begin{split}
		\liminf_{k\rightarrow\infty}\E\left[\sum_{i=1}^\infty \ddot{X}_i^{(n_k)}\right] &=\liminf_{k\rightarrow\infty}\E\left[\dfrac{1}{n_k}\sum_{i=1}^{n_k} \hat{Q}_i^{(n_k)}\right] \\&\leq \dfrac{1+\lambda + 2\lambda L}{1-\lambda}
		\end{split}
		\end{equation}
		
		Combining \eqref{eq1000}\eqref{eq1001}\eqref{eq1002} we have
		\begin{equation}
		\E\left[\sum_{i=1}^\infty Y_i\right] \leq \dfrac{1+\lambda + 2\lambda L}{1-\lambda }
		\end{equation}
		
	\end{proof}
	
	For any $\varepsilon>0$, by Lemma \ref{lem: unifint} and Claim \ref{claim: expfin}, there exist $b\in\mathbb{Z}_+$ such that
	\begin{equation}
	\sup_{k:~n_k\geq \overline{n}} \E\left[\sum_{i=b+1}^\infty \ddot{X}_i^{(n_k)} \right] \leq \dfrac{\varepsilon}{3},\quad \E\left[\sum_{i=b+1}^\infty Y_i\right]\leq \dfrac{\varepsilon}{3}
	\end{equation}
	which implies
	\begin{equation}
	\E\left[\sum_{i=b+1}^\infty |\ddot{X}_i^{(n_k)} - Y_i| \right] \leq \dfrac{2\varepsilon}{3}
	\end{equation}
	for all $k$ such that $n_k\geq \overline{n}$.
	
	Since $\ddot{X}_i^{(n_k)} - Y_i$ converges to $0$ a.s., and $|\ddot{X}_i^{(n_k)} - Y_i|\leq 2$ a.s., by the Bounded Convergence Theorem we have
	\begin{equation}
	\lim_{k\rightarrow\infty} \E\left[\sum_{i=0}^b |\ddot{X}_i^{(n_k)} - Y_i| \right] = 0
	\end{equation}
	
	Thus, for sufficiently large $k$, $\E\left[\displaystyle\sum_{i=0}^b |\ddot{X}_i^{(n_k)} - Y_i| \right] \leq \dfrac{\varepsilon}{3}$ and
	\begin{equation}
	\begin{split}
	&\quad \E[\|\ddot{\bX}^{(n_k)} - \mathbf{Y}\|_1] \\&=  \E\left[\sum_{i=0}^b |\ddot{X}_i^{(n_k)} - Y_i| \right] + \E\left[\sum_{i=b+1}^\infty |\ddot{X}_i^{(n_k)} - Y_i| \right] \leq \varepsilon
	\end{split}
	\end{equation}
	
	Hence, we established that 
	\begin{equation}
	\lim_{k\rightarrow\infty}\E\left[\|\ddot{\bX}^{(n_k)} -  \mathbf{Y}\|_1\right] = 0
	\end{equation}
	
	which implies that $\hat{\bX}^{(n_k)}$ converges to $\bY$ in $d_{W_1, \|\cdot\|_1}$
	
\end{proof}

\begin{thm}
	$\displaystyle\lim_{n\rightarrow\infty} \E[\|\hat{\bX}^{(n)} - \hat{\bx} \|_1] = 0$, where $\hat{\bx}$ is the unique fixed point of \eqref{diffeq}.
\end{thm}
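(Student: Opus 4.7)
The plan is to use Lemma \ref{lem: tight} to extract $W_1$-subsequential limits of $(\hat{\bX}^{(n)})$ and then pin each such limit down to the deterministic fixed point $\hat{\bx}$ via Corollary \ref{cor: main}, stationarity, and the global attractor property of the ODE \eqref{diffeq}. First I would fix an arbitrary subsequence and use Lemma \ref{lem: tight} to pass to a further subsequence $(\hat{\bX}^{(n_k)})$ converging in $d_{W_1,\|\cdot\|_1}$ to some random element $\bY \in [0,1]^{\mathbb{Z}_+}$. Combining Lemma \ref{lem: mmb} with Fatou's lemma gives $\E[\|\bY\|_1] \leq (1+\lambda+2\lambda L)/(1-\lambda) < \infty$, so $\|\bY\|_1 < \infty$ almost surely; this makes Corollary \ref{cor: main} applicable to any process whose initial law is that of $\bY$.

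Next, for each $n_k$, I would initialize the system at its stationary distribution from Theorem \ref{thm: sta}, so $(\bQ^{(n_k)}(0), \overline{\mathbf{W}}^{(n_k)}(0))$ has the stationary law and in particular $\bX^{(n_k)}(0) \stackrel{d}{\sim} \hat{\bX}^{(n_k)}$. Stationarity then gives $\bX^{(n_k)}(t) \stackrel{d}{\sim} \hat{\bX}^{(n_k)}$ for every $t\geq 0$. Since $\bX^{(n_k)}(0) \Rightarrow \bY$ weakly in $(\ell_1,\|\cdot\|_1)$ and $\|\bY\|_1 < \infty$ almost surely, Corollary \ref{cor: main} yields $\bX^{(n_k)}(t) \Rightarrow \bY(t)$ for each fixed $t\geq 0$, where $\bY(\cdot)$ is the random ODE trajectory of \eqref{diffeq} started from $\bY(0) \stackrel{d}{\sim} \bY$. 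Comparing the two limits along the same subsequence forces $\bY(t) \stackrel{d}{\sim} \bY$ for every $t\geq 0$, i.e.\ the law of $\bY$ is invariant under the deterministic ODE flow.

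The final ingredient is the classical fact, established for the power-of-$d$ ODE by Vvedenskaya et al.\ \cite{vvedenskaya1996queueing} and Mitzenmacher \cite{mitzenmacher2001power}, that every solution of \eqref{diffeq} with $\|\bx(0)\|_1 < \infty$ converges in $\ell_1$ to the unique fixed point $\hat{\bx}$ as $t\to\infty$. Applying this pathwise to $\bY(\cdot)$ gives $\bY(t) \to \hat{\bx}$ almost surely, hence $\bY(t) \Rightarrow \delta_{\hat{\bx}}$. Combined with $\bY(t) \stackrel{d}{\sim} \bY$ this forces $\bY \equiv \hat{\bx}$ almost surely, so every subsequential $W_1$-limit of $(\hat{\bX}^{(n)})$ is the Dirac mass at $\hat{\bx}$. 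By Lemma \ref{lem: tight} the sequence is sequentially compact in $d_{W_1,\|\cdot\|_1}$, and since all cluster points coincide, the full sequence converges in $d_{W_1,\|\cdot\|_1}$ to $\hat{\bx}$, which is equivalent to the claimed $\E[\|\hat{\bX}^{(n)} - \hat{\bx}\|_1] \to 0$.

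The main obstacle is really the appeal to the global-attractor property of the power-of-$d$ ODE: one must justify $\ell_1$-convergence of $\bY(t)$ to $\hat{\bx}$ starting from any $\|\bY(0)\|_1 < \infty$, not merely coordinate-wise convergence. This is by now a standard Lyapunov argument and I would simply cite the relevant statements from \cite{vvedenskaya1996queueing,mitzenmacher2001power}. A minor subtlety to guard against is that Corollary \ref{cor: main} must accommodate an arbitrary joint law of $(\bQ^{(n)}(0), \overline{\mathbf{W}}^{(n)}(0))$ (the stationary coupling is highly correlated), but it is precisely stated to allow this, so no additional work is needed there.
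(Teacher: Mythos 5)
Your proposal matches the paper's own argument essentially step for step: extract a $d_{W_1,\|\cdot\|_1}$-convergent subsequence via Lemma \ref{lem: tight}, establish $\E[\|\bY\|_1]<\infty$ from the moment bound, initialize at stationarity and invoke Corollary \ref{cor: main} to obtain $\bY(t)\stackrel{d}{\sim}\bY$ for all $t$, then use global asymptotic stability of \eqref{diffeq} (\cite{mitzenmacher2001power}) to force $\bY=\hat{\bx}$ a.s., and close with the standard every-subsequence argument. The one point you flag as a potential subtlety---that Corollary \ref{cor: main} must tolerate an arbitrary joint law of $(\bQ^{(n)}(0),\overline{\mathbf{W}}^{(n)}(0))$---is handled exactly as you note, since that corollary is stated for arbitrary random, correlated initial conditions.
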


\begin{proof}
	Let $(\hat{\bQ}^{(n)}, \hat{\mathbf{W}}^{(n)})$ be a random vector whose distribution is the stationary distribution with respect to the $n$-server system. Set $(\bQ^{(n)}(0), \overline{\mathbf{W}}^{(n)}(0))\stackrel{d}{\sim}(\hat{\bQ}^{(n)}, \hat{\mathbf{W}}^{(n)})$ for every $n$. We have $(\bQ^{(n)}(t), \overline{\mathbf{W}}^{(n)}(t))\stackrel{d}{\sim}(\hat{\bQ}^{(n)}, \hat{\mathbf{W}}^{(n)})$ for every $n$ and all $t \geq 0$. In particular, 
	\begin{equation}
	\bX^{(n)}(t) \stackrel{d}{\sim} \hat{\bX}^{(n)}
	\end{equation}
	for all $n$ and $t\geq 0$.
	
	Let $(\hat{\bX}^{(n_k)})_{k=1}^\infty$ be a subsequence of $(\hat{\bX}^{(n)})_n$ that converges in $d_{W_1, \|\cdot\|_1}$. Denote its limit by $\bY$. 
	
	By the proof of Lemma \ref{lem: tight} we know that $\E[\|\bY\|_1] < +\infty$, hence $\|\bY\|_1<+\infty$ a.s. By Corollary \ref{cor: main}, 
	\begin{equation}
	\bX^{(n_k)}(t)\xRightarrow[]{k\rightarrow\infty}\bY(t) \qquad\forall t\in\mathbb{R}_+
	\end{equation}
	where $\bY(t)$ is the state of the limiting dynamical system (specified by the differential equations in \eqref{diffeq}) with initial state given by the random variable $\bx(0)\stackrel{d}{\sim}\bY$. 
	
	Since the weak limit of sequences of random vectors is unique, we obtain 
	\begin{equation}\label{samedist}
	\bY(t)\stackrel{d}{\sim}\bY\qquad\forall t\geq 0
	\end{equation}
	
	It is proved in \cite{mitzenmacher2001power} that the limiting dynamics in \eqref{diffeq} is global asymptotically stable. For any $\bx(0) \in \ell_1([0, 1])$, we have $\bx(t)\xrightarrow{t\rightarrow\infty} \hat{\bx}$. Hence, we have
	\begin{equation}\label{distconv}
	\lim_{t\rightarrow\infty} \bY(t) = \hat{\bx}\qquad a.s.
	\end{equation}
	
	Combining \eqref{samedist} and \eqref{distconv}, we have $\bY = \hat{\bx}$ a.s. We have proved that every subsequence of $(\hat{\bX}^{(n)})_n$ which converges in $d_{W_1, \|\cdot\|_1}$ converges \emph{weakly} to $\hat{\bx}$.
	
	Since the weak limit of a sequence of random vectors is unique, we conclude that every subsequence of $(\hat{\bX}^{(n)})_n$ which converges in $d_{W_1, \|\cdot\|_1}$ converges to $\hat{\bx}$. Since $\hat{\bx}$ is deterministic, we further have this subsequence to converge to $\hat{\bx}$ in $L_1$.
	
	Combining Lemma \ref{lem: tight}, we conclude that every subsequence of $(\hat{\bX}^{(n)})_n$ has a further subsequence that converges to $\hat{\bx}$ in $L_1$. 
	
	Therefore we conclude that $\hat{\bX}^{(n)}$ converges to $\hat{\bx}$ in $L_1$.
\end{proof}

\section{Simulation Results}\label{sec: sim}
While the main results in the paper suggests that the queuing system dynamics converges to the solution of an ODE as the system size goes to infinity, it is not clear how well the ODE approximates the dynamics of the system for any finite number of servers. In this section, we provide simulation results to show that this approximation is still accurate for systems of tens of thousands of servers, which is the same scale as today's cloud computing centers. We also provide results to show that, for some graphs that are low-girth or non-expander, the ODE can fail to capture the dynamics of the system.

We use LPS graph \cite{lubotzky1988ramanujan} as the underlying graph. Specifically, we use a $6$-regular LPS graph of $n=12180$ vertices. The queue length statistics of single sample paths are shown in Figure \ref{fig: simresult1} and \ref{fig: simresult2}. The results show that the fluid-limit approximation can be accurate for relatively small systems.

We also test the scheme on small girth graphs. Specifically, we choose the $6$-regular torus graph $\mathbb{Z}_{15}\times\mathbb{Z}_{28}\times \mathbb{Z}_{29}$ as the underlying graph. The evolution of the queue length statistics are shown in Figure \ref{fig: simresult3} and \ref{fig: simresult4}. The results show that when Assumption \ref{ass: highgirthexpander} is violated, the ODE system may fail to capture the system dynamics.

We also test the scheme on cycle graphs, which are high girth non-expander graphs. The evolution of the queue length statistics are shown in Figure \ref{fig: simresult5} and \ref{fig: simresult6}. The results again show that when Assumption \ref{ass: highgirthexpander} is violated, the ODE system may fail to capture the system dynamics. Note that in \cite{sigm}, with the presence of resets, a similar scheme still yields the same performance as \emph{power-of-$d$-choices}.
\begin{figure}[H]
	\centering
	\includegraphics[width=9cm]{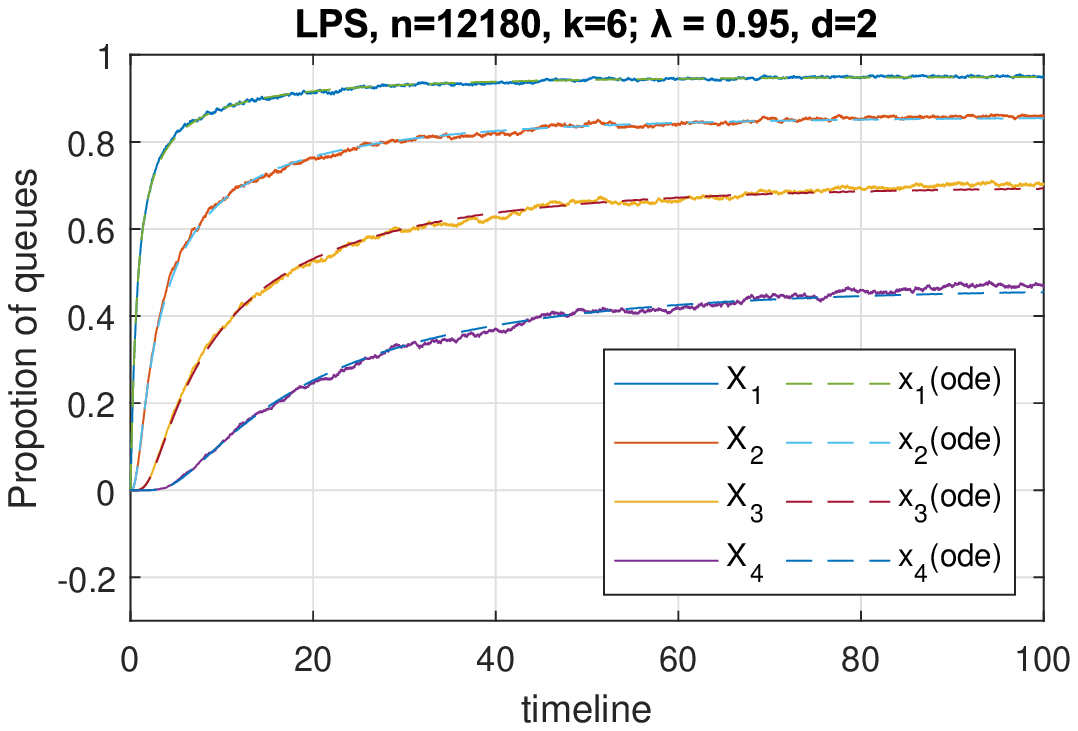}
	\caption{Queue length statistics evolution for NBRW-Po$d$ algorithm with $d=2$ and $\lambda = 0.95$. System is empty at time $0$.}\label{fig: simresult1}
\end{figure}

\begin{figure}[H]
	\centering
	\includegraphics[width=9cm]{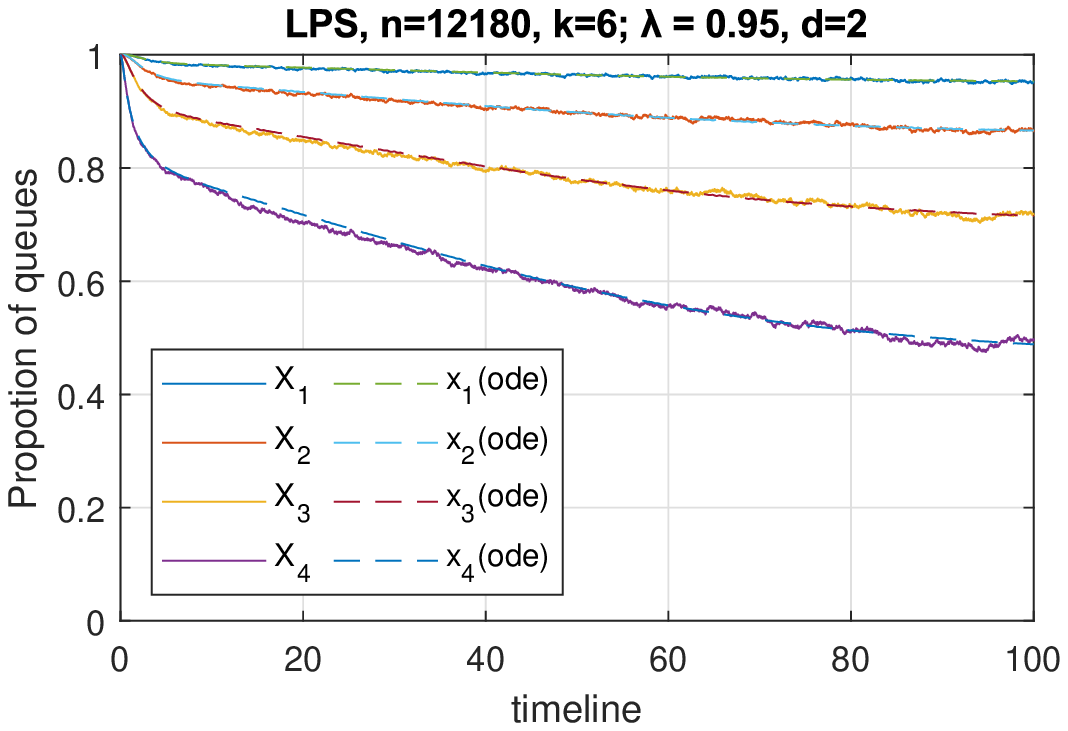}
	\caption{Queue length statistics evolution for NBRW-Po$d$ algorithm with $d=2$ and $\lambda = 0.95$. Each queue has a length of $5$ at time $0$.}\label{fig: simresult2}
\end{figure}

\begin{figure}[H]
	\centering
	\includegraphics[width=9cm]{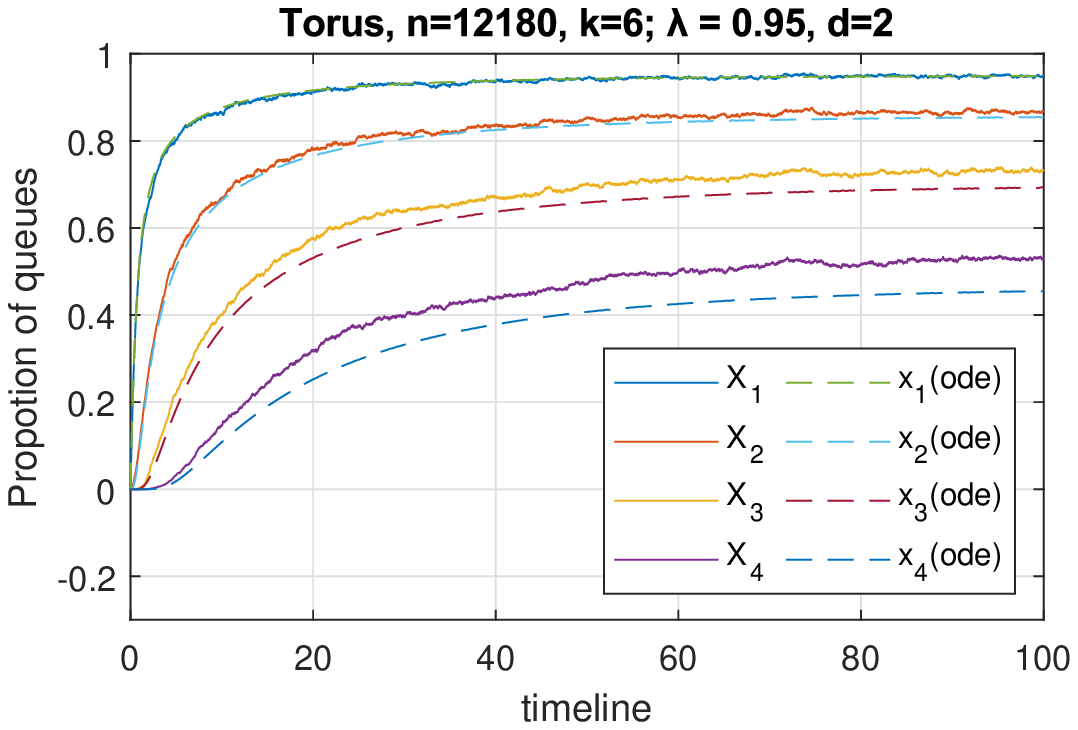}
	\caption{Queue length statistics evolution for NBRW-Po$d$ algorithm with $d=2$ and $\lambda = 0.95$. System is empty at time $0$.}\label{fig: simresult3}
\end{figure}

\begin{figure}[H]
	\centering
	\includegraphics[width=9cm]{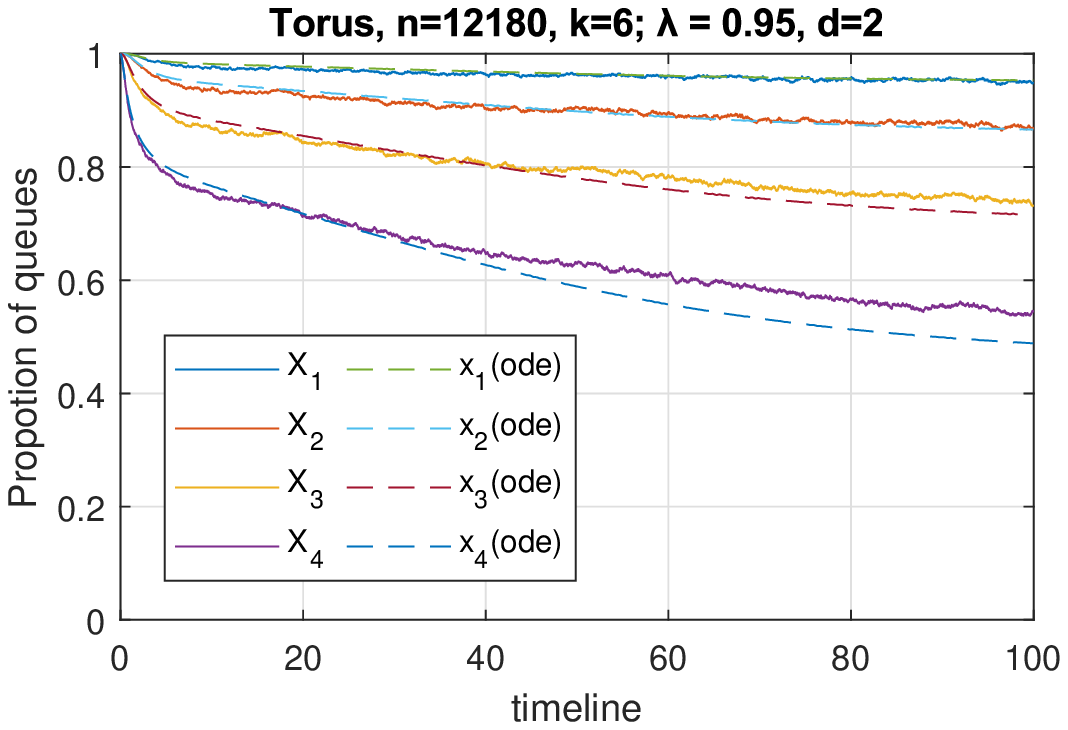}
	\caption{Queue length statistics evolution for NBRW-Po$d$ algorithm with $d=2$ and $\lambda = 0.95$. Each queue has a length of $5$ at time $0$.}\label{fig: simresult4}
\end{figure}

\begin{figure}[H]
	\centering
	\includegraphics[width=9cm]{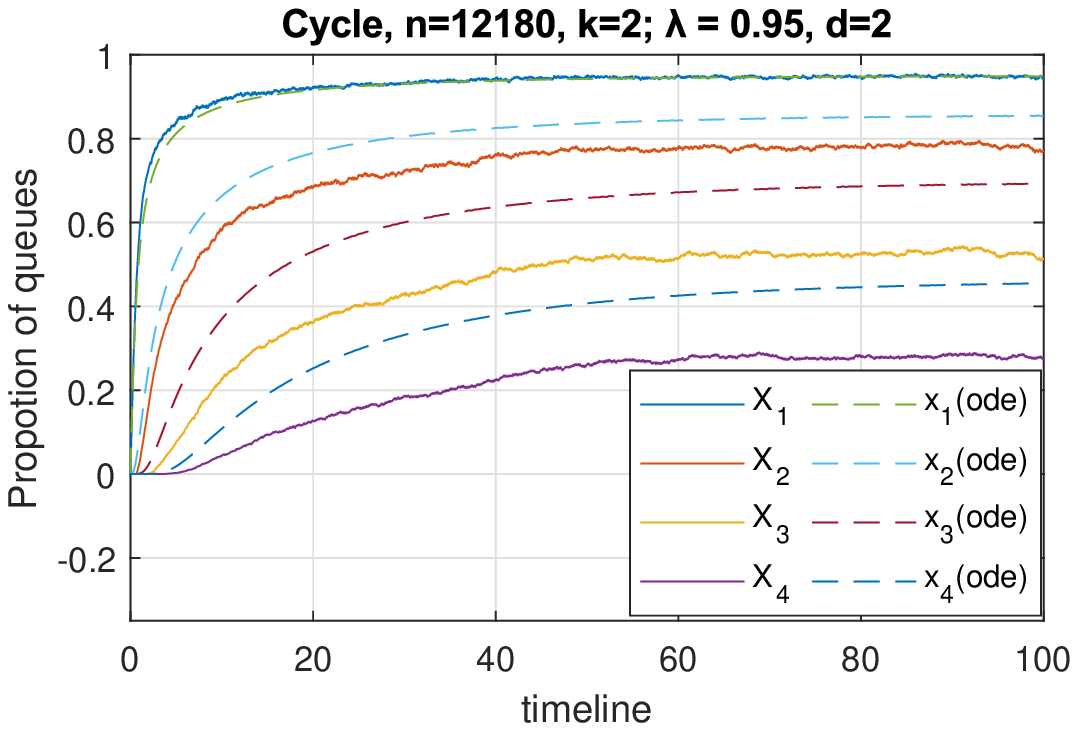}
	\caption{Queue length statistics evolution for NBRW-Po$d$ algorithm with $d=2$ and $\lambda = 0.95$. System is empty at time $0$.}\label{fig: simresult5}
\end{figure}

\begin{figure}[H]
	\centering
	\includegraphics[width=9cm]{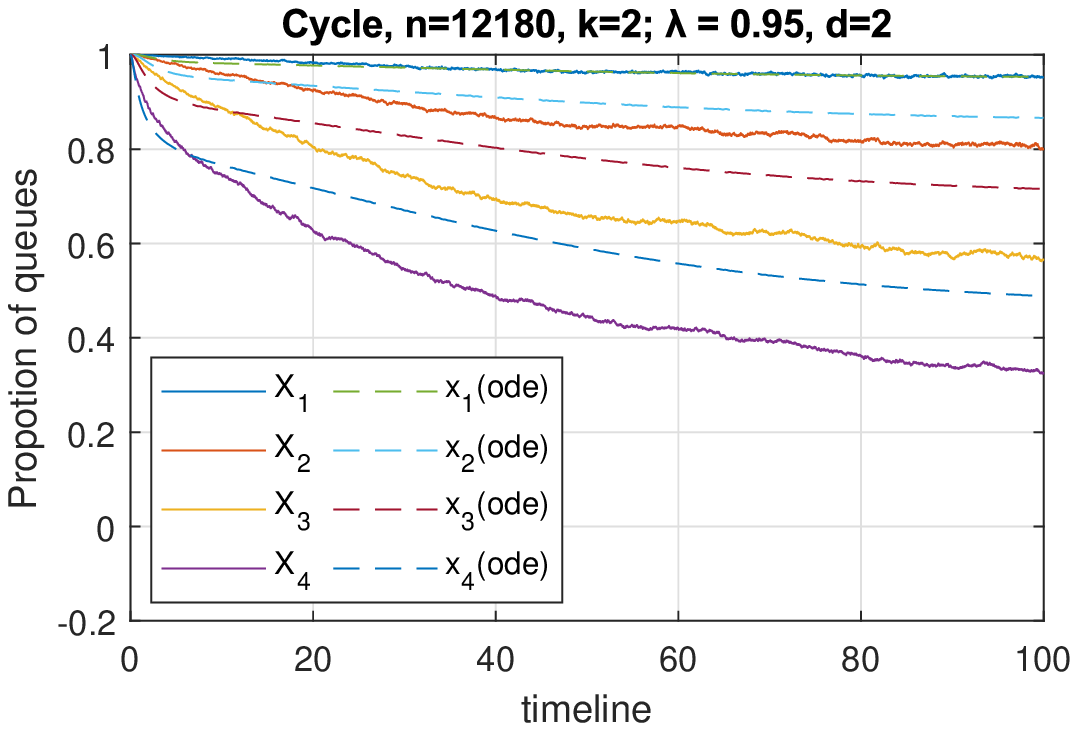}
	\caption{Queue length statistics evolution for NBRW-Po$d$ algorithm with $d=2$ and $\lambda = 0.95$. Each queue has a length of $5$ at time $0$.}\label{fig: simresult6}
\end{figure}

\section{Conclusions}\label{sec: concl}
In this paper we proposed and analyzed a low-randomness load balancing scheme for multi-server systems. The new scheme modifies the sampling procedure of the classical \emph{power-of-$d$-choices} by replacing independent uniform sampling with non-backtracking random walks on high-girth expander graphs. We show that, like \emph{power-of-$d$-choices}, the system dynamics under the new scheme can be approximated by the solution to a system of ODE. We also show that the scheme stablizes the system under mild assumptions. Finally, we show that the stationary queue length distribution of the system under the proposed scheme is the same as that of \emph{power-of-$d$-choices}. We conclude that the new scheme is a derandomization of \emph{power-of-$d$-choices} as it achieves the same performance by using less randomness.

There are a few future research directions suggested by this paper. First, the performance of NBRW-Po$d$ scheme under a heavy traffic model is of interest. Secondly, as the high-girth expander assumption can be too strong, it is worth identifying weaker assumptions in which the results in this paper still holds. Finally, analyzing the structure of the limiting stationary distribution of queue lengths for NBRW-Po$d$ scheme, in particular, if \emph{propagation of chaos} occurs or not , is of interest.

\section*{Acknowledgment}
The authors would like acknowledge support by NSF via grants AST-1343381, AST-1516075, IIS-1538827 and ECCS1608361, and also Harsha Honappa and Yang Xiao for early discussions.


\bibliographystyle{IEEEtran}
\bibliography{mybib}

\end{document}

\ifCLASSINFOpdf
\else
\fi
